\documentclass{article}
\textwidth=14cm \textheight=19cm 
\parindent=0,3cm
\oddsidemargin=1cm \evensidemargin=1cm
\pagestyle{myheadings} \markboth{H. Amzil et al.}{Subprojectivity in abelian categories}
\usepackage{ulem,hhline,dsfont,pifont,multicol,lmodern}
\usepackage{amscd}
\usepackage{lipsum}
\usepackage[all,cmtip]{xy}
\usepackage{amsmath,amsthm,amsfonts,amsbsy,amssymb,geometry,times,pst-node}
\usepackage{mathrsfs} 
\usepackage{latexsym}
\usepackage{mathtools}
\usepackage{enumitem} 
\usepackage[utf8]{inputenc}
\usepackage[english]{babel}
\usepackage{ mathrsfs }

\usepackage{graphicx}

\newcommand{\Z}{\mathbb{Z}}

\usepackage[all]{xy}

\def\A{{\mathscr{A}}}
\def\Mod{{\rm Mod}}

\def\Ext{{\rm Ext}}

\def\Hom{{\rm Hom}}
\def\Im{{\rm Im}}
\def\Ker{{\rm Ker}}
\def\Coker{{\rm Coker}}

\def\wdim{{\rm w.gl.dim}}
\def\F{{\cal F}}
\def\FP{{\cal F}^{\perp}}


\newtheorem{thm}{\bf Theorem}[section]
\newtheorem{cor}[thm]{\bf Corollary}

\newtheorem{prop}[thm]{\bf Proposition}
\newtheorem{Def}[thm]{\bf Definition}

\newtheorem{ex}[thm]{\bf Example}

\begin{document}
	\title{Subprojectivity   in abelian categories}
	\author{Houda Amzil, Driss  Bennis, J. R. Garc\'{\i}a Rozas,\\
	 Hanane Ouberka and Luis Oyonarte}

	\date{}
	
	\maketitle

\bigskip

\noindent{\large\bf Abstract.} In the last few years, López-Permouth and several collaborators have introduced a new approach in the study of the classical projectivity, injectivity and flatness of modules. This way, they introduced subprojectivity domains of modules as a tool to measure, somehow, the projectivity level of such a module (so not just to determine whether or not the module is projective). In this paper we develop a new treatment of the subprojectivity in any abelian category which shed more light on some of its various important aspects.  Namely, in terms of subprojectivity, some classical results are unified and some classical rings are characterized. It is also shown that, in some categories, the subprojectivity measures notions other than the projectivity. Furthermore, this new approach allows, in addition to   establishing nice generalizations of known results, to construct various new examples such as the subprojectivity domain of the class of Gorenstein projective objects, the class of semi-projective complexes and particular types of representations of a finite linear quiver. The paper ends with a study showing that the fact that a subprojectivity domain of a class coincides with its first right Ext-orthogonal class can be characterized in terms of the existence of preenvelopes and precovers.


\bigskip

\small{\noindent{\bf Key Words.} Subprojectivity, subprojectivity domain, precovers, preenvelopes}

\small{\noindent{\bf 2010 Mathematics Subject Classification.} 18G25}



\section{Introduction}
Throughout this paper, $\mathscr{A}$ will denote an abelian category with enough projectives. We denote the class of projective objects by $Proj(\mathscr{A})$ and the class of injective objects by $Inj(\mathscr A)$. Also, $R$ will denote an associative ring with identity and modules will be unital left $R$-modules, unless otherwise explicitly stated. As usual, we denote by $R$-Mod and Mod-$R$ the category of left $R$-modules and the category of right $R$-modules, respectively. We denote by $C(R)$ the category of complexes of left $R$-modules.

To any given class of objects $\mathcal{C}$ of $\mathscr{A}$ we associate its right Ext-orthogonal class, $$\mathcal{C}^\perp= \{X \in \mathscr{A} \mid \Ext^1(C,X) = 0, C\in \mathcal{C}\},$$ and its left Ext-orthogonal class, $${}^\perp\mathcal{C}= \{X \in \mathscr{A} \mid \Ext^1(X,C) = 0, C\in \mathcal{C}\}.$$ In particular, if $\mathcal{C}= \{M\}$ then we simply write ${}^\perp\mathcal{C}={}^\perp M$ and  $\mathcal{C}^\perp=M^\perp$.

Recall that, given a class of objects $\F$, an $\F$-precover of an object $M$ is a morphism $F\rightarrow M$ with $F\in \F$, such that $\Hom(F',F)\rightarrow \Hom (F',M)\rightarrow 0$ is exact for any $F'\in\F$. An $\F$-precover is said to be special provided that it is an epimorphism with kernel in the class $\FP$. $\F$-preenvelopes and special $\F$-preenvelopes are defined dually.

Recall that an object $M$ of an abelian category $\mathscr A$ is said to be Gorenstein projective if  there exists an exact and $\Hom(-, Proj(\A))$-exact complex of projective objects $$\cdots \to P_{-1} \to P_0 \to P_{1} \to \cdots$$ such that $M =\Ker(P_0 \to P_1)$ (see \cite[Definition 10.2.1]{Enochs}). While an object $M$ of an abelian category $\mathscr A$ is said to be strongly Gorenstein projective if there exists a short exact sequence $ 0 \to M \to P \to M \to 0$ with $P$ projective and $M \in {}^\perp{Proj}(\mathscr{A}) $ (see \cite{BM}). We use $\mathcal{GP}$ and $\mathcal{SGP}$ to denote the class of all Gorenstein projective objects and the class of all strongly Gorenstein projective objects, respectively.

In \cite{Sergio}, an alternative perspective on the projectivity of a module was introduced. Recall that, for two modules $M$ and $N$, $M$ is said to be $N$-subprojective if for every epimorphism $g : B \to N $ and every morphism $f : M \to  N$,  there exists a morphism $h : M \to B$  such that $gh = f$. Then, Holston et al. defined in \cite{Sergio} the subprojectivity domain of any module $M$ as the class of all modules $N$ such that $M$ is $N$-subprojective. The purpose of \cite{Sergio} was to introduce a new approach on the analysis of the projectivity of a module. However, the study of the subprojectivity goes beyond that aim and, indeed provides, among other things, an interesting new side on some other known notions. This opens a new important area of research which attracts many authors.

In this paper, we develop a new treatment  of  the subprojectivity in the categorical context. This study provides new interesting tools to develop this area of research. Indeed, we obtain, for instance, generalizations of several results using new methods which give a different light to the way they are seen now, which in addition, gives new perspectives. And we unify known and classical results in terms of subprojectivity. The current study provides also new powerful tools in constructing various interesting examples. For instance, we know and it is easy to show that the subprojectivity domain of a projective object $P$ is the whole category $\mathscr{A}$, which is exactly the  right Ext-orthogonal class of $P$. So we can write ${{\underline {\mathfrak{Pr}}}}^{-1}(P)=P^\perp$. So it is natural to ask how far we can get by extending this equality. We will show that, at least, it is possible to extend it to objects which are embedded in projective ones (see Proposition \ref{M'-Q-M}). As a consequence, we deduce that, if $M$ is a Gorenstein projective object, then there is an object $N$ such that  ${{\underline {\mathfrak{Pr}}}_{\mathscr{A}}}^{-1}(M)=N^\perp$.

We also introduce subprojectivity domains of classes as a natural extension of the subprojectivity domains of objects. This provides a new context in this domain of research in which several interesting questions arise. We show, among several other things, that the fact that a subprojectivity domain of a class coincides with its first right Ext-orthogonal class can be characterized in terms of preenvelopes and precovers.
  
The paper is organized as follows:

In section \ref{sec2}, we investigate subprojectivity domains of objects. We start by giving examples in the category of complexes and the category of representations of a quiver which show that the role of subprojectivity could go beyond the measure of the projectivity, but also have the ability to measure other properties such as the exactness of complexes (Proposition \ref{prop-semi-proj-ob}, see also Proposition \ref{prop-semi-proj-cl}) or when a morphism is monic (see Proposition \ref{prop-monic}). The main contribution of this section is the elaboration of two new ways to treat the subprojectivity of objects. The first one, Proposition \ref{Lemma-subproj}, is a functorial characterization of  the subprojectivity of objects and the second one, Proposition \ref{prop-facto}, characterizes the subprojectivity of objects in terms of factorizations of morphisms. This contribution allows to easily establish throughout the paper new and interesting results and examples. For instance, Corollary \ref{cor-subproj-sgoren-ob} shows that if $M$ is a strongly Gorenstein projective object then ${{\underline {\mathfrak{Pr}}}_{\mathscr{A}}}^{-1}(M)=M^\perp$. And Corollaries \ref{cor-emb-proj} and \ref{cor-fg-emb-proj} give, in terms of subprojectivity, a new way to see how an object (module) can be embedded in a projective object (module).

We also introduce and investigate subprojectivity domains of classes as a natural extension of subprojectivity domains of objects. This notion leads, among other things, to an unification of several well-known results (see Corollaries \ref{cor-A-flat}, \ref{cor-pp-A}, \ref{cor-A-fproj} and \ref{cor-A-simpleproj}). We determine subprojectivity domains of various classes such as the one of semi-projective complexes  (Proposition \ref{prop-semi-proj-cl}), the one of strongly Gorenstein projective objects (Proposition \ref{prop-subproj-sgoren-cl}), the one of finitely presented objects (Proposition \ref{prop-sub-fp-flat}), the one of finitely generated modules (Proposition \ref{prop-fg-fproj}), and the one of simple modules (Proposition \ref{prop-simple-proj}). We show in Proposition \ref{prop-sub-dom-Add} that the subprojectivity domain of a class $\mathcal{L}$ does not change even if we modify this class to  $Add(\mathcal{L})$ (i.e. the class of all objects which are isomorphic to direct summands of direct sums of copies of objects of the class $\mathcal{L}$). As consequences, the subprojectivity domains of the classes of all pure-projective modules, all  semisimple modules and all Gorenstein projective objects are determined (see Corollaries \ref{cor-pure-proj-flat}, \ref{prop-semisimple-proj} and \ref{prop-subproj-goren-cl}).

Section \ref{sec3} is devoted to the study of some closure properties of subprojectivity domains. We extend the study done in  \cite{Sergio} and we give new results. In Proposition \ref{extension} we show that the subprojectivity domain of any class is closed under extensions, finite direct sums and direct summands. Then, we characterize when are the subprojectivity domains closed under kernels of epimorphisms (Proposition \ref{lem-chara-cols-kernl} and Example \ref{ex-SGP-FG}). In Proposition \ref{clsub} we show that the subprojectivity domain of a class $\mathcal{L}$ is closed under subobjects if and only if the subprojectivity domain of any object of $\mathcal{L}$ is closed under subobjects. This leads to new characterizations of known notions. For instance, in Corollary \ref{cor-wdim-fp}, we show that, for any ring $R$,  $\wdim R \leq 1$ if and only if the subprojectivity domain of each finitely presented module is closed under submodules. In Corollary \ref{cor-R-semihered} we prove that a left coherent ring $R$ is left semihereditary if and only if the subprojectivity domain of each of its finitely generated modules is closed under submodules. Similarly, in Proposition \ref{clprod}, we generalize  \cite[Proposition 2.14]{Sergio} by showing that the subprojectivity domain of a class $\mathcal{L}$ is closed under arbitrary direct products if and only if the subprojectivity domain of any of its objects is closed under arbitrary direct products. This result allows us to give a much direct proof (see Corollary \ref{cor-R-coherent}) of a characterization of coherent rings established by  Durğun in \cite[Proposition 2.3]{Dur2}. Inspired by the work of Parra and Rada \cite{Rada-Parra}, we show that, if we assume further conditions on $\mathscr A$, then the closure under direct products of the subprojectivity domains of classes can be characterized in terms of preenvelopes (see Proposition \ref{prop-L-prec-proj}). We end Section \ref{sec3} with a discussion on the closure under direct sums of the subprojectivity domains of classes. In \cite[Proposition 2.13]{Sergio}, it was shown that the subprojectivity domain of any finitely generated module is closed under arbitrary direct sums. Here, using the functorial characterization of the subprojectivity domains, we show that this also holds for small objects.
  
Finally, in Section \ref{sec4} we relate subprojectivity domains with right Ext-orthogonal of classes. The main result (Theorem \ref{thm-main4}) states that, under some conditions on the category $\mathscr A$ and on the class $\mathcal{L}$, the following conditions are equivalent:
\begin{enumerate}
\item $ \mathcal{\mathcal{L}^\perp} ={{\underline {\mathfrak{Pr}}}_{\mathscr{A}}}^{-1}(\mathcal{L})$.
\item $ \mathcal{L} \bigcap \mathcal{\mathcal{L}^\perp} = Proj(\mathscr A)$ and every object in $\mathcal{L}^\perp$ has a special $\mathcal{L}$-precover.
\item    $Proj(\mathcal{A}) \subseteq \mathcal{L}^\perp$, ${{\underline {\mathfrak{Pr}}}_{\mathcal{A}}}^{-1}(\mathcal{L})$ is closed under cokernels of monomorphisms  and every $M \in \mathcal{L}$ has an $\mathcal{L}^\perp$-preenvelope which is projective.
\end{enumerate} 

We end the paper with two consequences (Corollaries \ref{cor-R-quasifrob} and \ref{cor-specialGP-precover}). In the last one we show that every object in $\mathcal{GP}^\perp$ has a special $\mathcal{GP}$-precover. This is in fact the  recent result \cite[Proposition 4.1]{ZH19} established in a different way.

 %
 %

\section{Subprojectivity domains in abelian categories} \label{sec2}

Subprojectivity of objects is a notion studied up to a certain level of deepness in categories of modules. However, it is a categorical type concept which has not even been considered in this general setting. The aim of this section is thus to explore the meaning of subprojectivity in nice categories from the homological point of view: abelian categories.

We start by recalling what subprojectivity means.

\begin{Def}[\cite{Sergio}]
Given two objects $M$ and $N$ in ${\mathscr{A}}$, $M$ is said to be $N$-subprojective if for every morphism $f:M\to N$ and every epimorphism $g:K \to N$, there exists a morphism $h:M\to K$ such that $gh=f$.

The \textit{subprojectivity domain}, or \textit{domain of subprojectivity}, of $M$ is defined as the class $${\underline {\mathfrak{Pr}}}_{\mathscr{A}}^{-1}(M):=\{ N \in {\mathscr{A}}: M \ is \ N\text {-}subprojective\}.$$
\end{Def}

In \cite[Lemma 2.3]{Sergio} it was proved that for $M$ to be $N$-subprojective one only needs to lift maps to projective modules that cover $N$ or even to a single projective module covering $N$. We now provide a functorial extension of this result which we will use to give some examples.

\begin{prop}\label{Lemma-subproj}
Let $M$ and $N$ be two objects of ${\mathscr{A}}$ and $\mathcal{X}$ be a subclass of ${\underline {\mathfrak{Pr}}}_{\mathscr{A}}^{-1}({M})$ such that every object in ${\underline {\mathfrak{Pr}}}_{\mathscr{A}}^{-1}(M)$ is an epic image of an object in $\mathcal{X}$. Then the following conditions are equivalent:
\begin{enumerate}
\item $M$ is $N$-subprojective. 
\item There exists a morphism $g:X\to N$ with $X \in \mathcal{X}$ such that $\Hom(M,g)$ is an epimorphism. 
\end{enumerate}
\end{prop}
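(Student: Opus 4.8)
The plan is to prove the two implications separately, the heart of the matter being the passage $(2) \Rightarrow (1)$, where a pullback replaces the elementwise lifting used in the module case of \cite[Lemma 2.3]{Sergio}. First I would unwind condition (2): since $\Hom(M,g)$ is the homomorphism of abelian groups $\Hom(M,X) \to \Hom(M,N)$ sending $h \mapsto gh$, asserting that it is an epimorphism is the same as asserting that every $f \colon M \to N$ factors as $f = gh$ through $g$.

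For $(1) \Rightarrow (2)$, I would exploit the hypothesis on $\mathcal{X}$. Assuming $M$ is $N$-subprojective means precisely that $N \in {\underline {\mathfrak{Pr}}}_{\mathscr{A}}^{-1}(M)$, so by hypothesis $N$ is an epic image of some object $X \in \mathcal{X}$; fix such an epimorphism $g \colon X \to N$. Now for an arbitrary $f \colon M \to N$, the $N$-subprojectivity of $M$ applied to the epimorphism $g$ yields $h \colon M \to X$ with $gh = f$. Thus $f$ lies in the image of $\Hom(M,g)$, and since $f$ was arbitrary this image is all of $\Hom(M,N)$, that is, $\Hom(M,g)$ is an epimorphism.

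The substantive direction is $(2) \Rightarrow (1)$. Given $g \colon X \to N$ with $X \in \mathcal{X}$ and $\Hom(M,g)$ an epimorphism, I must verify that $M$ is $N$-subprojective, so I take an arbitrary $f \colon M \to N$ and an arbitrary epimorphism $p \colon K \to N$ and produce a lift. By (2) I first factor $f = gu$ for some $u \colon M \to X$. The key construction is the pullback $Q = K \times_N X$ of $p$ along $g$, with projections $\pi_K \colon Q \to K$ and $\pi_X \colon Q \to X$ satisfying $p\pi_K = g\pi_X$. Since the pullback of an epimorphism along any morphism is again an epimorphism in an abelian category, $\pi_X$ is an epimorphism. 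Because $X \in \mathcal{X} \subseteq {\underline {\mathfrak{Pr}}}_{\mathscr{A}}^{-1}(M)$, the object $M$ is $X$-subprojective, so applying this to the morphism $u \colon M \to X$ and the epimorphism $\pi_X$ gives $v \colon M \to Q$ with $\pi_X v = u$. Setting $h = \pi_K v$ then yields $ph = p\pi_K v = g\pi_X v = gu = f$, as required.

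The only genuine obstacle is locating the right epimorphism to lift along in $(2) \Rightarrow (1)$; the pullback $Q$ is exactly the categorical device that simultaneously tracks the given cover $p \colon K \to N$ and the factorization of $f$ through $g$, and its being an epimorphism onto $X$ is what lets the $X$-subprojectivity of $M$ do the work. Everything else is formal, and one should note that in (2) the morphism $g$ need not itself be an epimorphism — only $\Hom(M,g)$ must be surjective — which is harmless, since $\pi_X$ inherits epicness from $p$ rather than from $g$.
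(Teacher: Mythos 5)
Your proof is correct and follows essentially the same route as the paper: the same use of the hypothesis on $\mathcal{X}$ for $(1)\Rightarrow(2)$, and the same pullback construction for $(2)\Rightarrow(1)$, where the paper phrases the final step as a diagram of $\Hom$-groups while you chase an individual $f$ — an immaterial difference since both amount to surjectivity of $\Hom(M,-)$ applied to the relevant epimorphisms.
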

\begin{proof} One can easily see that $(1)\Rightarrow (2)$.

To show $(2)\Rightarrow (1)$ assume that there exists such a morphism $g:X \to N$ and let $K \to N$ be an epimorphism. Then apply $\Hom(M,-)$ to the pullback diagram $$\xymatrix{D \ar[r] \ar[d] & X \ar[d] \ar[r] & 0 \\ K \ar[r] & N \ar[r] & 0}$$ to get $$\xymatrix{\Hom(M,D) \ar[r] \ar[d] & \Hom(M,X) \ar[d] \\ \Hom(M,K) \ar[r] & \Hom(M,N)}$$

By $(2)$ we see that $\Hom(M,X) \to \Hom(M,N)$ is an epimorphism and, since $\mathcal{X}$ is a subclass of ${\underline {\mathfrak{Pr}}}_{\mathscr{A}}^{-1}({M})$, the morphism $\Hom(M,D) \to \Hom(M,X)$ is also an epimorphism. Then $\Hom(M,K) \to \Hom(M,N)$ is of course epic.
\end{proof}

Notice that conditions $(1)$ and $(2)$ of Proposition \ref{Lemma-subproj}  are equivalent even if $\mathscr A$ does not have enough projectives, and that $\mathcal{X}$ could be the whole ${\underline {\mathfrak{Pr}}}_{\mathscr{A}}^{-1}({M})$ as considered in \cite[Proposition 2.1]{Dur2}, or the class of all  projective objects as in \cite[Lemma 2.3]{Sergio}.

We now give an example, in the context of representations of quivers by modules, of the usefulness of the above fact. Recall that the linear quiver $$v_n \to v_{n-1} \to \cdots \to v_2\to v_1$$ is denoted by $A_{n}$ and  the category of representations of $A_{n}$ is denoted by $Rep({A_n})$.   As in  \cite{EnoEs}, we use $\overline{M}[i]$, for a module $M$, to denote the representation 
$$0 \rightarrow 0  \rightarrow \cdots \rightarrow 0 \rightarrow M \stackrel{id}\longrightarrow  \cdots\stackrel{id}\longrightarrow  M  \stackrel{id}\longrightarrow  M$$
where the last $M$ is in the $i$'th place.\\
Following  \cite[Theorem 4.1]{EnoEs} (see also \cite[Section 2]{Sang}), we know that a representation $$ \quad    M_n\stackrel{f_{n-1}}\longrightarrow M_{n-1}\longrightarrow  \cdots \longrightarrow M_2\stackrel{f_1}\longrightarrow M_1$$ of $A_n$ is projective if and only if it is a direct sum of the following projective representations: 
 $$ \begin{array}{l} 
 \overline{P_1}[1]:\ 0\rightarrow0\rightarrow\dots\rightarrow0\rightarrow0\rightarrow P_1, \\ \\ \overline{P_2}[2]:\ 0\rightarrow0\rightarrow\dots\rightarrow0\rightarrow P_{2} \stackrel{id}\longrightarrow  P_{2}, \\ \\ \qquad  \qquad  \qquad \vdots \\ \\   \overline{P_n}[n] :\  P_n\stackrel{id}\longrightarrow P_n\stackrel{id}\longrightarrow  \cdots \stackrel{id}\longrightarrow  P_n\stackrel{id}\longrightarrow  P_n\stackrel{id}\longrightarrow  P_n,\end{array}$$
where the $P_i$'s are all projective modules. Thus, for a module $M$, 
the  representation  $\overline{M}[i]$  is projective if and only if $M$ is projective. We generalize this fact to the case of subprojectivity. 

\begin{prop}
			Let $M$ be a module and $(N,\delta)=N_n\stackrel{\delta_n}\rightarrow N_{n-1} \stackrel{\delta_{n-1}} \rightarrow\cdots\stackrel{\delta_3}\rightarrow N_2\stackrel{\delta_2}\rightarrow N_1$ be a representation of  $A_n$ ($n\geq 2$) in $R$-\Mod.  Then, for an integer $1\leq i\leq n$, $N  \in  {\underline {\mathfrak{Pr}}}_{Rep({A_n})}^{-1}(\overline{M}[i])$ if and only if $N_i \in {\underline {\mathfrak{Pr}}}_{R-\Mod}^{-1}(M)$. 		
\end{prop}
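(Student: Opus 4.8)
The plan is to reduce the whole statement to a single natural isomorphism of Hom-functors together with the behaviour of epimorphisms under pullback. First I would record the basic computation that a morphism $f:\overline{M}[i]\to N$ in $Rep(A_n)$ is entirely determined by its $i$-th component $f_i:M\to N_i$. Indeed, in positions $j>i$ the component $f_j$ is forced to be $0$ (its source is $0$), while in positions $j\le i$ the structure maps of $\overline{M}[i]$ are identities, so the compatibility conditions read $f_{j-1}=\delta_j\circ f_j$ and thus determine $f_{i-1},\dots,f_1$ from $f_i$; conversely any $f_i:M\to N_i$ extends uniquely to such an $f$. This produces an isomorphism $\Hom_{Rep(A_n)}(\overline{M}[i],N)\cong \Hom_R(M,N_i)$ which is natural in $N$ (equivalently, the functor $M\mapsto\overline{M}[i]$ is left adjoint to evaluation at the vertex $i$). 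I would also recall that a morphism in $Rep(A_n)$ is an epimorphism precisely when each of its components is an epimorphism in $R$-\Mod, since (co)limits in $Rep(A_n)$ are computed vertexwise.

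For the implication ``$N_i\in{\underline {\mathfrak{Pr}}}_{R\text{-}\Mod}^{-1}(M)\Rightarrow N\in{\underline {\mathfrak{Pr}}}_{Rep(A_n)}^{-1}(\overline{M}[i])$'', assume $M$ is $N_i$-subprojective and let $g:K\to N$ be any epimorphism in $Rep(A_n)$. Its $i$-th component $g_i:K_i\to N_i$ is then an epimorphism, so $\Hom_R(M,g_i)$ is surjective; by the naturality of the isomorphism above, $\Hom_{Rep(A_n)}(\overline{M}[i],g)$ is surjective as well, which is exactly the $N$-subprojectivity of $\overline{M}[i]$. (Alternatively, one may feed this directly into Proposition \ref{Lemma-subproj}.)

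The reverse implication is where the real work lies, and the main obstacle is to realize an \emph{arbitrary} epimorphism onto $N_i$ as the $i$-th component of an epimorphism of representations onto $N$. Given an epimorphism $p:B\to N_i$ in $R$-\Mod, I would construct $K\in Rep(A_n)$ together with an epimorphism $g:K\to N$ satisfying $g_i=p$ as follows: put $K_j=N_j$ and $g_j=\mathrm{id}$ for $j<i$, put $K_i=B$ and $g_i=p$ with structure map $\delta_i^K=\delta_i\circ p$, and for $j>i$ define $K_j$ by iterated pullbacks, $K_{i+1}=N_{i+1}\times_{N_i}B$ and $K_{j}=N_{j}\times_{N_{j-1}}K_{j-1}$ for $j>i+1$. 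The key point is that the pullback of an epimorphism is again an epimorphism, so each component $g_j$ with $j>i$ stays epic and $g$ is an epimorphism in $Rep(A_n)$ with $g_i=p$. Applying the $N$-subprojectivity of $\overline{M}[i]$ to $g$ and reading it through the natural isomorphism shows that $\Hom_R(M,g_i)=\Hom_R(M,p)$ is surjective, i.e. $M$ is $N_i$-subprojective. The extreme cases are harmless: for $i=n$ no pullbacks are needed, and for $i=1$ there are no lower positions, so the same construction applies verbatim.
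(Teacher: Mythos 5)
Your proof is correct, but it takes a genuinely different route from the paper's. The paper reduces both directions to lifting along a single epimorphism from a \emph{projective} representation via Proposition \ref{Lemma-subproj}, and then exploits the explicit classification of projective representations of $A_n$ (direct sums of the $\overline{P_j}[j]$, hence with projective modules as components): in the forward direction the $i$-th component of such an epimorphism is an epimorphism from a projective module onto $N_i$, through which any $f\colon M\to N_i$ is lifted by extending it to a morphism of representations; in the converse direction one lifts the $i$-th component through the projective module at vertex $i$ (using the hypothesis on $N_i$) and propagates the lift along the identity structure maps of $\overline{M}[i]$ — and only the case $n=2$ is written out. You instead make the natural isomorphism $\Hom_{Rep(A_n)}(\overline{M}[i],-)\cong\Hom_R(M,(-)_i)$ the engine of the whole argument and handle \emph{arbitrary} epimorphisms: the forward direction becomes pure naturality, and the converse rests on your pullback tower realizing any epimorphism $B\to N_i$ as the $i$-th component of an epimorphism of representations onto $N$, which is legitimate since pullbacks of epimorphisms of modules are epimorphisms and epimorphisms in $Rep(A_n)$ are computed vertexwise. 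Your approach buys uniformity in $n$ and $i$ and complete independence from the structure theory of projectives in $Rep(A_n)$ (it would transfer verbatim to any vertex of any quiver where the left adjoint to evaluation is available), at the cost of the explicit pullback construction; the paper's approach is shorter once Proposition \ref{Lemma-subproj} and the description of projective representations are already in hand. I see no gap in your argument.
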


\begin{proof} For simplicity in notation we only prove the case of ${A}_2$ (${A}_n$ follows by the same arguments). Thus, we just need to discuss two cases: $0\to M$ and $M\stackrel{id}\longrightarrow M$.

1. Choose any representation ${\mathbf N}: N_2 \to N_1 \in  {\underline {\mathfrak{Pr}}}_{Rep(A_2)}^{-1}(0 \to M)$, an epimorphism $(\alpha,\beta):{\mathbf P}\to {\mathbf N}$ from a projective representation ${\mathbf P}:P_2 \to P_1 \in Rep(A_2)$, and any morphism of modules $f: M \to N_1$. Then, $(0,f):  \left(0\to M\right)  \to {\mathbf N} $ is a morphism of representations and the diagram $$
	\xymatrix{
		&&& 0 \ar@{-->}[dlll]\ar[ddll]\ar[rrr]&&&M \ar@{-->}[dlll]_h \ar[ddll]^{f}\\
		P_2\ar[rd]_{\alpha}\ar[rrr]&&& P_1\ar[rd]_{\beta}&&&\\
		&N_2 \ar[rrr]&&&N_1&&\\
	}
	$$
can be completed commutatively. Therefore, $f=\beta h$ and, by Proposition \ref{Lemma-subproj}, $N_1 \in {\underline {\mathfrak{Pr}}}_{R-\Mod}^{-1}(M)$.

The converse is easy to prove.

2. To prove the necessary condition choose any representation $N_2\to N_1 \in  {\underline {\mathfrak{Pr}}}_{Rep({A}_2)}^{-1}(M \stackrel{id}\longrightarrow M)$ and any morphism of modules $f: M \to N_2$. Then, there exists a morphism of representations $(k,h)$ completing commutatively the diagram
	$$
	\xymatrix{
		&&& M \ar@{-->}[dlll]_k\ar[ddll]^f\ar@{=}[rrr] &&&M \ar@{-->}[dlll]_h \ar[ddll]^{gf}\\
		P_2\ar[rd]_{\alpha}\ar[rrr]&&& P_1\ar[rd]_{\beta}&&&\\
		&N_2 \ar[rrr]^g&&&N_1&&\\
	}
	$$ where $ P_2 \to P_1$  is a projective  representation and $  (\alpha, \beta)$ an epimorphism in the category $Rep({A}_2)$. Therefore $f=\alpha k$ and then, again by Proposition \ref{Lemma-subproj},  $N_2 \in {\underline {\mathfrak{Pr}}}_{R-\Mod}^{-1}(M)$.
	
Conversely, let $N_2\to N_1 $ be a representation in $Rep({A}_2)$. Suppose that $N_2$ is in ${\underline {\mathfrak{Pr}}}_{R-\Mod}^{-1}(M)$ and consider a projective representation $ P_2 \to P_1$, an epimorphism $(\alpha, \beta)$ from $P_2\to P_1$ onto $N_2\to N_1 $, and a morphism of representations $(f_2,f_1)$ from $M \stackrel{id}\longrightarrow M$ to $N_2\to N_1 $. Then,  there exists $h:M \to P_2$ such that $f_2=\alpha h$. Therefore, we get the following commutative diagram 
	$$
	\xymatrix{
		&&& M \ar[dlll]_h\ar[ddll]^{f_2}\ar@{=}[rrr] &&&M \ar[dlll]_{\pi h} \ar[ddll]^{f_1}\\
		P_2\ar[rd]_{\alpha}\ar[rrr]^{\pi}&&& P_1\ar[rd]_{\beta}&&&\\
		&N_2\ar[rrr]^g&&&N_1&&\\
	}
	$$ so $f_1=g f_2= g \alpha h= \beta \pi h$ and hence $(f_2,f_1)=(\alpha, \beta) (h, \pi h)$. This means by Proposition \ref{Lemma-subproj} that $N_2 \to N_1 \in  {\underline {\mathfrak{Pr}}}_{Rep({A}_2)}^{-1}(M \stackrel{id}\longrightarrow M)$.
\end{proof}

Subprojectivity domains were introduced in \cite{Sergio} to, somehow, measure the projectivity of modules. So for instance it is clear that a module is projective precisely when its subprojectivity domain is the whole category $R$-Mod. Of course one immediately sees that this is not a situation which holds just in module categories. On the opposite, it does in every abelian category with enough projectives. We state it as a proposition.

\begin{prop}\label{prop-whole-cat-obj}
Let $M$ be an object of $\mathscr{A}$. Then the following conditions are equivalent:
\begin{enumerate}
\item  ${\underline {\mathfrak{Pr}}}_{\mathscr{A}}^{-1}(M)$ is the whole abelian category $\mathscr A$.
\item $M$ is projective.
\item $M\in{\underline {\mathfrak{Pr}}}_{\mathscr{A}}^{-1}(M)$.
\end{enumerate} 
\end{prop}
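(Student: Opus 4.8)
The plan is to establish the cyclic chain of implications $(1)\Rightarrow(3)\Rightarrow(2)\Rightarrow(1)$, the only substantive step being $(3)\Rightarrow(2)$, which is precisely where the standing assumption that $\mathscr{A}$ has enough projectives enters.

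The implication $(1)\Rightarrow(3)$ is immediate: since $M$ is itself an object of $\mathscr{A}$, if ${\underline {\mathfrak{Pr}}}_{\mathscr{A}}^{-1}(M)$ is all of $\mathscr{A}$ then in particular $M\in{\underline {\mathfrak{Pr}}}_{\mathscr{A}}^{-1}(M)$. Dually, $(2)\Rightarrow(1)$ is just the defining lifting property of projective objects: if $M$ is projective, then for every object $N$, every epimorphism $g:K\to N$ and every morphism $f:M\to N$ there is $h:M\to K$ with $gh=f$, so $M$ is $N$-subprojective for every $N$ and hence ${\underline {\mathfrak{Pr}}}_{\mathscr{A}}^{-1}(M)=\mathscr{A}$.

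The heart of the argument is $(3)\Rightarrow(2)$. Here I would invoke enough projectives to choose an epimorphism $g:P\to M$ with $P\in Proj(\mathscr{A})$. Applying the hypothesis that $M$ is $M$-subprojective to this epimorphism together with the morphism $f=\mathrm{id}_M:M\to M$ produces $h:M\to P$ with $gh=\mathrm{id}_M$. Thus $g$ is a split epimorphism and $M$ is (isomorphic to) a direct summand of the projective object $P$; since a direct summand of a projective object is projective, $M$ is projective. The key point is to feed the identity $\mathrm{id}_M$ into the subprojectivity lifting against a projective presentation of $M$, so that the resulting morphism $h$ is a section of $g$.

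I do not anticipate any real obstacle: each implication is short, and the only thing to watch is that the $M$-subprojectivity in $(3)$ is used exactly with $f=\mathrm{id}_M$ and with a projective epimorphism onto $M$, so that $h$ splits $g$ and exhibits $M$ as a summand of a projective. One could alternatively route $(3)\Rightarrow(2)$ through Proposition \ref{Lemma-subproj} taking $\mathcal{X}=Proj(\mathscr{A})$, but the direct splitting argument above is cleaner and self-contained.
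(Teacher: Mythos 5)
Your proof is correct and follows essentially the same route the paper takes: the paper states this proposition without proof as immediate, but the splitting argument you give for $(3)\Rightarrow(2)$ (lift $\mathrm{id}_M$ against a projective epimorphism $P\to M$ to exhibit $M$ as a direct summand of $P$) is exactly the argument used in the paper's proof of the class version, Proposition \ref{prop-whole-cat}.
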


But in some cases subprojectivity can measure notions different from that of projectivity. We now give two examples showing this fact:

1.-In the category of complexes the subprojectivity domain of a semi-projective complex measures the exactness of such a complex (Proposition \ref{prop-semi-proj-ob}).

2.-Subprojectivity in the category of representations by modules of the quiver $A_2$ characterizes, in some cases, monomorphisms in $R$-Mod (Proposition \ref{prop-monic}).

Recall that a complex ${P}$ is said to be semi-projective (or DG-projective) if for every morphism $\alpha: P \to N$ and for every surjective quasi-isomorphism $\beta : M \to N$ there exists a morphism $\gamma : P \to M$ such that $\alpha = \beta\gamma$. Also, for every complex $N$ there exists a semi-projective complex $X$ and a quasi-isomorphism $f:X\to N$ (see \cite[Corollary 3.2.3]{Lars}).

\begin{prop} \label{prop-semi-proj-ob}
Let $N$ be a complex and let $f:X\to N$ be a quasi-isomorphism from a semi-projective complex $X$. Then, $N\in{{\underline {\mathfrak{Pr}}}_{C(R)}}^{-1}(X)$ if and only if $N$ is exact. 
\end{prop}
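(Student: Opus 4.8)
The plan is to prove both implications by reducing everything to a single epimorphism onto $N$ coming from a projective object of $C(R)$, and then playing the homological content of ``$f$ is a quasi-isomorphism'' against the fact that projective objects of $C(R)$ are exact. Recall that the projective objects of $C(R)$ are precisely the contractible complexes of projective modules, so every such object is exact; I will use this structural fact in both directions. Since $C(R)$ has enough projectives, I start by fixing an epimorphism $\rho\colon Q\to N$ with $Q$ a projective object of $C(R)$; in particular $Q$ is exact.

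For the implication $N\in{\underline {\mathfrak{Pr}}}_{C(R)}^{-1}(X)\Rightarrow N$ exact, I would apply the hypothesis directly to the morphism $f\colon X\to N$ and the epimorphism $\rho$: since $X$ is $N$-subprojective there is $h\colon X\to Q$ with $\rho h=f$. Passing to homology, $H_n(f)=H_n(\rho)\circ H_n(h)$ factors through $H_n(Q)=0$, so $H_n(f)=0$ for every $n$. But $f$ is a quasi-isomorphism, so each $H_n(f)$ is an isomorphism; an isomorphism that is also zero forces $H_n(N)=0$ for all $n$, i.e. $N$ is exact. Note that this direction uses only that $f$ is a quasi-isomorphism and that $Q$ is exact, not the semi-projectivity of $X$.

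For the converse, assume $N$ is exact. Then $Q$ and $N$ are both exact, so the epimorphism $\rho\colon Q\to N$ induces (trivially) isomorphisms on homology and is therefore a \emph{surjective quasi-isomorphism}. Now the semi-projectivity of $X$ enters: given any morphism $\phi\colon X\to N$, the defining lifting property of a semi-projective complex, applied to $\phi$ and the surjective quasi-isomorphism $\rho$, yields $\gamma\colon X\to Q$ with $\phi=\rho\gamma$. Hence $\Hom(X,\rho)\colon\Hom(X,Q)\to\Hom(X,N)$ is an epimorphism. Taking $\mathcal X=Proj(C(R))$ in Proposition \ref{Lemma-subproj} (a legitimate choice, since projectives lie in every subprojectivity domain and every object is an epic image of a projective), condition $(2)$ holds, so condition $(1)$ gives that $X$ is $N$-subprojective, i.e. $N\in{\underline {\mathfrak{Pr}}}_{C(R)}^{-1}(X)$.

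The routine parts are the homology bookkeeping and the appeal to Proposition \ref{Lemma-subproj}; the one step deserving care---and the conceptual heart of the argument---is the observation that when $N$ is exact the chosen projective cover $\rho\colon Q\to N$ is automatically a surjective quasi-isomorphism, which is exactly the hypothesis the definition of semi-projectivity requires. Equivalently, for the converse one could first note that an exact semi-projective complex is contractible, hence a projective object, and then invoke Proposition \ref{prop-whole-cat-obj} to conclude that its subprojectivity domain is all of $C(R)$; I would present the surjective-quasi-isomorphism route instead, since it relies only on the definitions already supplied above.
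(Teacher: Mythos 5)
Your argument is correct. The forward implication is essentially the paper's own: lift $f$ through a projective (hence exact) complex covering $N$ and read off the homology. The converse, however, takes a genuinely different route. The paper observes that $N$ exact forces $X$ exact (they are quasi-isomorphic), then invokes the structural fact that an exact semi-projective complex is a projective object of $C(R)$ (citing \cite[Proposition 2.3.7]{Ramon}), whence ${\underline {\mathfrak{Pr}}}_{C(R)}^{-1}(X)$ is everything --- this is exactly the ``alternative'' you mention and set aside at the end. Your preferred route instead keeps $X$ as it is and notices that when $N$ is exact the fixed epimorphism $\rho\colon Q\to N$ from a projective (hence exact) complex is automatically a surjective quasi-isomorphism, so the very definition of semi-projectivity lifts any $\phi\colon X\to N$ through $\rho$; Proposition \ref{Lemma-subproj} with $\mathcal X=Proj(C(R))$ (or, even more directly, Proposition \ref{prop-facto}, since you have exhibited a factorization through the projective $Q$) then closes the argument. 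What your route buys is self-containedness: it uses only the definition of semi-projectivity and the exactness of projective complexes, avoiding the external characterization of exact semi-projective complexes as projective objects. What the paper's route buys is brevity and the slightly stronger intermediate conclusion that $X$ itself is projective. Both are sound.
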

\begin{proof}
Suppose that $N \in {{\underline {\mathfrak{Pr}}}_{C(R)}}^{-1}(X)$ and consider an epimorphism $g:P\to N$ with $P$ projective. Since $N \in {{\underline {\mathfrak{Pr}}}_{C(R)}}^{-1}(X)$, there exists a morphism $h:X\to P$ such that $f=gh$, so $H(f)=H(g)H(h)$, and since $f$ is a quasi-isomorphism, $H(g): H(P)\to H(N)$ is an epimorphism, where $H(.)$ is the homology functor. But $P$ is exact, so $N$ is also exact.

Conversely, if $N$ is exact then $X$ is also exact since they are quasi-isomorphic, so by \cite[Proposition 2.3.7]{Ramon} $X$ is projective. Therefore, $N \in {{\underline {\mathfrak{Pr}}}_{C(R)}}^{-1}(X)$.
\end{proof}

\begin{prop}\label{prop-monic}
Let $M$ be a module and $g:N_1 \to N_2$ be a morphism of modules. Then, $$N_1 \stackrel{g}\rightarrow N_2 \in {\underline {\mathfrak{Pr}}}_{Rep(A_2)}^{-1}(M \to 0)\Leftrightarrow \Hom_{R}(M,\Ker g)=0.$$

In particular, if $M$ is free then $N_1 \stackrel{g}\rightarrow N_2 \in  {\underline {\mathfrak{Pr}}}_{Rep({A_2})}^{-1}(M \to 0)$ if and only if $g:N_1 \to N_2$ is monic.
\end{prop}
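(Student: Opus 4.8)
The plan is to characterize membership in ${\underline {\mathfrak{Pr}}}_{Rep(A_2)}^{-1}(M \to 0)$ by applying the functorial criterion of Proposition \ref{Lemma-subproj}. The representation $M \to 0$ in $Rep(A_2)$ is exactly $\overline{M}[2]$ in the notation introduced above (the source is $M$, the target is $0$), so I can work with concrete projective covers. First I would reduce the problem: to test whether the representation $N_1 \stackrel{g}\rightarrow N_2$ lies in the subprojectivity domain, it suffices by Proposition \ref{Lemma-subproj} to check a single projective cover. Take a projective module $P$ with an epimorphism $P \to N_1$; then build a projective representation covering our representation and apply $\Hom(\overline{M}[2], -)$.

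The key computation is to identify the morphism space $\Hom_{Rep(A_2)}(\overline{M}[2], N_1 \stackrel{g}\rightarrow N_2)$. A morphism of representations from $M \stackrel{\mathrm{id}}{\to} M$ (which is $\overline{M}[2]$, wait — here $\overline{M}[2]$ has source $M$ and target $0$, so it is the representation $M \to 0$) into $N_1 \stackrel{g}\rightarrow N_2$ consists of a pair $(\varphi_1, \varphi_2)$ with $\varphi_1 : M \to N_1$, $\varphi_2 : 0 \to N_2$, making the square commute, i.e. $g \varphi_1 = \varphi_2 \circ 0 = 0$. Hence such a morphism is precisely a map $\varphi_1 : M \to N_1$ with $g\varphi_1 = 0$, that is, a map $M \to \Ker g$. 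So $\Hom_{Rep(A_2)}(\overline{M}[2], N) \cong \Hom_R(M, \Ker g)$. Now the subprojectivity condition, via Proposition \ref{Lemma-subproj}, asks for a projective representation $X$ mapping onto our representation such that $\Hom(\overline{M}[2], X) \to \Hom(\overline{M}[2], N)$ is surjective. The cleanest route is to show the condition is equivalent to $\Hom_R(M, \Ker g) = 0$: if this $\Hom$ vanishes, every lifting problem is trivially solvable (the only morphism to lift is zero), so $N$ is in the domain; conversely, if the domain condition holds, I would produce an obstruction from any nonzero map $M \to \Ker g$ by exhibiting a projective cover through which it cannot factor.

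For the converse direction I expect the main obstacle to lie precisely here: showing that a nonzero element of $\Hom_R(M, \Ker g)$ genuinely obstructs subprojectivity. The idea is to take an epimorphism $\pi : P \to N_1$ from a projective module $P$ and consider the induced epimorphism of representations from a projective representation onto $N_1 \stackrel{g}\rightarrow N_2$; a morphism $\overline{M}[2] \to N$ given by $\psi : M \to \Ker g \subseteq N_1$ lifts to the projective cover only if $\psi$ factors through $\pi$ compatibly with the second component, and the commutativity constraint forces the lift to land in $\Ker(P \to N_2)$, which need not surject onto $\Ker g$. Concretely, I would choose the covering representation so that any lift of $\psi$ would force $\psi$ itself to be zero, contradicting $\psi \neq 0$. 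This is the delicate step, since it requires arranging the projective representation so that the failure is detected; the natural choice is the representation $P \stackrel{g\pi}{\to} N_2$ or a projective cover thereof, and tracking the kernels carefully.

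Finally, the ``in particular'' clause is immediate once the main equivalence is established: if $M$ is free, then $M$ is projective, hence a generator-type object for which $\Hom_R(M, K) = 0$ forces $K = 0$ (a free module admits a nonzero map to any nonzero module, since $R$ itself maps onto any nonzero cyclic submodule). Thus $\Hom_R(M, \Ker g) = 0$ is equivalent to $\Ker g = 0$, which is equivalent to $g$ being a monomorphism. I would state this reduction in one line, invoking freeness of $M$ to pass from the vanishing of the $\Hom$ to the vanishing of $\Ker g$, and conclude that $N_1 \stackrel{g}\rightarrow N_2 \in {\underline {\mathfrak{Pr}}}_{Rep(A_2)}^{-1}(M \to 0)$ if and only if $g$ is monic.
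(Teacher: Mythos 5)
Your reduction to the identification $\Hom_{Rep(A_2)}(M\to 0,\,N_1\stackrel{g}\rightarrow N_2)\cong \Hom_R(M,\Ker g)$ is correct, and the easy direction (if this group vanishes, the only morphism to lift is the zero morphism, so the representation lies in the domain) is exactly the paper's argument. But the converse implication, which you yourself flag as ``the delicate step,'' is left genuinely open, and the strategy you sketch for it would not close. You propose to \emph{arrange} a projective representation so that a nonzero $\psi\colon M\to\Ker g$ fails to lift, suggesting ``$P\stackrel{g\pi}{\to}N_2$ or a projective cover thereof''; note first that $P\stackrel{g\pi}{\to}N_2$ is not a projective representation of $A_2$ (its target is $N_2$), so it cannot serve as the test object. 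More importantly, no arranging is needed: the fact the paper uses is that the structure map of \emph{every} projective representation of $A_2$ is a monomorphism (immediate from the classification recalled in the paper --- projectives are direct sums of $0\to Q$ and $Q\stackrel{id}{\to}Q$ --- and cited as \cite[Theorem 3.1]{EnoEs}). Given an epimorphism $(\beta,\alpha)$ from a projective representation $P_1\stackrel{\pi}{\to}P_2$ onto $N_1\stackrel{g}{\to}N_2$, a lift $(h,0)$ of $(i\psi,0)$ must satisfy $\pi h=0$ by commutativity, hence $h=0$ since $\pi$ is monic, hence $i\psi=\beta h=0$ and $\psi=0$. Your remark that the lift ``lands in $\Ker(P\to N_2)$, which need not surject onto $\Ker g$'' points at the wrong kernel: the relevant one is that of the structure map of the projective representation, and it is zero. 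Without this ingredient the implication ``in the domain $\Rightarrow\Hom_R(M,\Ker g)=0$'' is not established.

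A secondary point: $M\to 0$ is not $\overline{M}[2]$. In the paper's notation $\overline{M}[2]$ denotes $M\stackrel{id}{\to}M$, while $M\to 0$ (the stalk at the source vertex) is not of the form $\overline{M}[i]$ at all; your actual computation of the $\Hom$ group is carried out for the correct object, so this is only a notational slip, but it should be fixed. The ``in particular'' clause is fine: for a (nonzero) free module $M$, $\Hom_R(M,K)=0$ forces $K=0$, so the vanishing of $\Hom_R(M,\Ker g)$ is equivalent to $g$ being monic.
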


\begin{proof}
Suppose that $N_1 \stackrel{g}\rightarrow N_2 \in {\underline {\mathfrak{Pr}}}_{Rep({A_2})}^{-1}(M \to 0)$, let $f: M \to \Ker g$ be a morphism of modules and $i:\Ker g \to N_1$ be the canonical injection. We get the  following commutative diagram 
$$
\xymatrix{
&&& M \ar@{-->}[dlll]_h\ar[ddll]^{if}\ar[rrr]&&&0 \ar@{-->}[dlll] \ar[ddll]\\
P_1 \ar[rd]_{\beta}\ar[rrr]^{\pi}&&& P_2\ar[rd]^{\alpha}&&&\\
&N_1 \ar[rrr]_g&&&N_2&&\\
}
$$
where $P_1 \stackrel{\pi}\rightarrow P_2$ is a projective  representation and $(\beta, \alpha)$ is an epimorphism in the category $Rep({A}_2)$. Therefore, $\pi h=0$ so $f=0$ since $\pi$ is monic (see \cite[Theorem 3.1]{EnoEs}).

Conversely, if $\Hom_{R}(M,\Ker g)=0$ then $\Hom_{Rep({A_2})}(M\to 0,N_1\to N_2)=0$ so clearly $N_1 \stackrel{g}\rightarrow N_2\in  {\underline {\mathfrak{Pr}}}_{Rep({A_2})}^{-1}(M \to 0)$.
\end{proof}

The following result provides new ways to treat and use subprojectivity.

\begin{prop} \label{prop-facto}
Let $M$ and $N$ be objects of ${\mathscr{A}}$. Then,  $M$ is $N$-subprojective if and only if every morphism $M\to N$ factors through a projective object.
\end{prop}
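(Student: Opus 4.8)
The plan is to prove both implications directly from the definition of $N$-subprojectivity together with the defining lifting property of projective objects, using crucially that $\mathscr{A}$ has enough projectives for one of the two directions.

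For the forward implication, I would assume $M$ is $N$-subprojective and take an arbitrary morphism $f:M\to N$. Since $\mathscr{A}$ has enough projectives, I can choose an epimorphism $g:P\to N$ with $P$ projective. Applying the definition of $N$-subprojectivity to the morphism $f$ and the epimorphism $g$ yields a morphism $h:M\to P$ with $gh=f$, which exhibits $f$ as factoring through the projective object $P$. This handles an arbitrary $f$, so every morphism $M\to N$ factors through a projective object.

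For the reverse implication, I would start with an arbitrary epimorphism $g:K\to N$ and an arbitrary morphism $f:M\to N$, with the goal of producing a lift $h:M\to K$ satisfying $gh=f$. By hypothesis $f$ factors as $f=\beta\alpha$ through a projective object $P$, with $\alpha:M\to P$ and $\beta:P\to N$. Because $P$ is projective and $g$ is an epimorphism, the morphism $\beta$ lifts along $g$, so there is $\gamma:P\to K$ with $g\gamma=\beta$. Setting $h=\gamma\alpha$ then gives $gh=g\gamma\alpha=\beta\alpha=f$, and since $g$ was an arbitrary epimorphism onto $N$ this shows $M$ is $N$-subprojective.

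The argument involves no real obstacle: it is a routine diagram chase once the factorization is in hand. The only point requiring care is that the forward direction genuinely relies on the standing assumption that $\mathscr{A}$ has enough projectives (in order to obtain the epimorphism $g:P\to N$), whereas the reverse direction needs only the existence of the factorization and the lifting property of projective objects. Alternatively, one could deduce the statement from Proposition \ref{Lemma-subproj} by taking $\mathcal{X}$ to be the class of projective objects, but the direct proof above is shorter and more transparent.
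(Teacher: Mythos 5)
Your proof is correct and follows essentially the same route as the paper: the forward direction lifts an arbitrary $f:M\to N$ along an epimorphism from a projective (available since $\mathscr{A}$ has enough projectives), and the converse factors $f$ through a projective $P$ and then uses the projectivity of $P$ to lift along any given epimorphism $K\to N$. No gaps.
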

\begin{proof} The necessary condition is clear since $\A$ has enough projectives.

Conversely, suppose that every morphism $M\to N$ factors through a projective object, consider any morphism $f:M\to N$ and any epimorphism $g:K\to N$. Then, by the assumption, there exists a projective object $P$ and a commutative diagram $$\xymatrix{P \ar[rd]_{\beta} & M \ar[l]_{\alpha} \ar[d]^f \\ K \ar[r]_g & N \ar[r] & 0}$$ And by the projectivity of $P$ the diagram $$\xymatrix{P \ar@{-->}[d]\ar[rd]_{\beta} & M \ar[l]_{\alpha} \ar[d]^f \\ K \ar[r]_g & N \ar[r] & 0}$$ can be completed commutatively.
\end{proof}

Last result leads to some interesting consequences. We start by the following.

\begin{prop}\label{M'-Q-M}
Let $0\to M \to Q\to M'\to 0$ be a short exact sequence with $Q$ projective. Then ${M'}^\perp\subseteq {{\underline {\mathfrak{Pr}}}_{\mathscr{A}}}^{-1}(M)$. If moreover $Proj(\mathscr{A})\subseteq {M'}^\perp$ then ${{\underline {\mathfrak{Pr}}}_{\mathscr{A}}}^{-1}(M)={M'}^\perp$.
\end{prop}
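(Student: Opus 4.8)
The plan is to reduce everything to the factorization criterion of Proposition \ref{prop-facto} and then extract the relevant surjectivity from the long exact sequence obtained by applying $\Hom(-,N)$ to the given short exact sequence. Write the sequence as $0\to M\stackrel{\iota}{\to}Q\stackrel{p}{\to}M'\to 0$, and recall that by Proposition \ref{prop-facto} one has $N\in{{\underline {\mathfrak{Pr}}}_{\mathscr{A}}}^{-1}(M)$ if and only if every morphism $M\to N$ factors through a projective object. This is the dictionary that will let me pass back and forth between the subprojectivity domain and $\Ext$-vanishing.

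For the inclusion ${M'}^\perp\subseteq{{\underline {\mathfrak{Pr}}}_{\mathscr{A}}}^{-1}(M)$, I would take $N\in{M'}^\perp$ and apply $\Hom(-,N)$ to the sequence, isolating the exact piece $\Hom(Q,N)\stackrel{\Hom(\iota,N)}{\to}\Hom(M,N)\to\Ext^1(M',N)$. Since $\Ext^1(M',N)=0$ by assumption, the restriction map $\Hom(Q,N)\to\Hom(M,N)$ is an epimorphism; hence every $f\colon M\to N$ lifts to some $g\colon Q\to N$ with $g\iota=f$, i.e. $f$ factors through the projective object $Q$. By Proposition \ref{prop-facto} this yields $N\in{{\underline {\mathfrak{Pr}}}_{\mathscr{A}}}^{-1}(M)$.

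For the reverse inclusion under the extra hypothesis $Proj(\mathscr{A})\subseteq{M'}^\perp$, I would take $N\in{{\underline {\mathfrak{Pr}}}_{\mathscr{A}}}^{-1}(M)$ and aim to show $\Ext^1(M',N)=0$. Because $Q$ is projective we have $\Ext^1(Q,N)=0$, so the same long exact sequence identifies $\Ext^1(M',N)$ with the cokernel of $\Hom(\iota,N)\colon\Hom(Q,N)\to\Hom(M,N)$. Thus it suffices to prove that this map is surjective, that is, that every $f\colon M\to N$ extends along $\iota$ to a morphism $Q\to N$.

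The hard part is exactly this extension step, and it is where the hypothesis $Proj(\mathscr{A})\subseteq{M'}^\perp$ is used. Given $f\colon M\to N$, subprojectivity (Proposition \ref{prop-facto}) lets me factor $f=\beta\alpha$ with $\alpha\colon M\to P$, $\beta\colon P\to N$ and $P$ projective. I would then extend $\alpha$ along $\iota$: applying $\Hom(-,P)$ to the sequence gives $\Hom(Q,P)\to\Hom(M,P)\to\Ext^1(M',P)$, and since $P$ is projective the hypothesis forces $P\in{M'}^\perp$, so $\Ext^1(M',P)=0$ and $\alpha=\tilde{\alpha}\iota$ for some $\tilde{\alpha}\colon Q\to P$. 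Setting $g=\beta\tilde{\alpha}\colon Q\to N$ then gives $g\iota=\beta\tilde{\alpha}\iota=\beta\alpha=f$, which is the required surjectivity of $\Hom(\iota,N)$ and hence $\Ext^1(M',N)=0$, completing the proof.
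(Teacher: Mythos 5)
Your proposal is correct and follows essentially the same route as the paper: both directions reduce to showing $\Hom(Q,N)\to\Hom(M,N)$ is epic via the long exact sequence, using $\Ext^1(M',N)=0$ for the first inclusion and $\Ext^1(M',P)=0$ for projectives $P$ in the second. The only cosmetic difference is that you factor each morphism $M\to N$ individually through a (possibly varying) projective via Proposition \ref{prop-facto}, whereas the paper fixes a single projective epimorphism $P\to N$ and chases one commutative diagram; the underlying argument is identical.
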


\begin{proof}
Let $N\in\mathscr{A}$ and consider the long exact sequence $$\xymatrix{\ar[r]& \Hom(Q,N) \ar[r]& \Hom(M,N) \ar[r]& \Ext^1(M',N)\ar[r]& \Ext^1(Q,N) \ar[r]&}$$

If  $N\in {M'}^\perp$ then $\Ext^1(M',N)=0$ so $\Hom(Q,N) \to \Hom(M,N)$ is epic, that is, any morphism $M\to N$ factors through the projective object $Q$. Then, by Proposition \ref{prop-facto} we deduce that $N\in {{\underline {\mathfrak{Pr}}}_{\mathscr{A}}}^{-1}(M)$. 

Suppose in addition that $Proj(\mathscr{A})\subseteq {M'}^\perp$. Let  $N\in {{\underline {\mathfrak{Pr}}}_{\mathscr{A}}}^{-1}(M)$ and let $P\to N$ be an epimorphism. We apply the functors $\Hom(-,P)$ and $\Hom(-,N)$ to $M\to Q$ to get the following commutative diagram $$
\xymatrix{
	\Hom(Q,P)\ar[r]\ar[d]& \Hom(M,P)\ar[d]\ar[r] & \Ext^1(M',P)\\
	\Hom(Q,N) \ar[r]& \Hom(M,N)\ar[r] & \Ext^1(M',N)\\
}
$$

To prove that $\Ext^1(M',N)=0$ it is sufficient to prove that $\Hom(Q,N)\to \Hom(M,N)$ is epic
since $Q$ is projective. But $N\in {{\underline {\mathfrak{Pr}}}_{\mathscr{A}}}^{-1}(M)$ so $\Hom(M,P) \to \Hom(M,N)$ is an epimorphism, and of course $\Hom(Q,P)\to\Hom(M,P)$ is epic ($Ext^1(M',P)=0$ by assumption), so then $\Hom(Q,N)\to \Hom(M,N)$ is epic.
\end{proof}

An example of an object satisfying the condition of Proposition \ref{M'-Q-M} can be found among strongly Gorenstein projective objects.

\begin{cor}\label{cor-subproj-sgoren-ob}  If $M$ is a  strongly Gorenstein projective object then  ${{\underline {\mathfrak{Pr}}}_{\mathscr{A}}}^{-1}(M)=M^\perp$.
\end{cor}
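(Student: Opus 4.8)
The plan is to recognize this as an immediate application of Proposition \ref{M'-Q-M} to the defining short exact sequence of a strongly Gorenstein projective object. First I would unfold the definition: since $M$ is strongly Gorenstein projective, there is a short exact sequence $0 \to M \to P \to M \to 0$ with $P$ projective and $M \in {}^\perp Proj(\mathscr{A})$. The crucial observation is that in this sequence the cokernel of the inclusion $M \to P$ is again $M$ itself; this is precisely the feature distinguishing \emph{strongly} Gorenstein projective objects from merely Gorenstein projective ones. Hence the sequence has exactly the shape required by Proposition \ref{M'-Q-M}, with $Q = P$ and with the quotient object $M'$ equal to $M$.

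With this identification in place, the first half of Proposition \ref{M'-Q-M} gives at once the inclusion $M^\perp = {M'}^\perp \subseteq {{\underline {\mathfrak{Pr}}}_{\mathscr{A}}}^{-1}(M)$. To upgrade this to an equality I would invoke the second half of the same proposition, which requires verifying the extra hypothesis $Proj(\mathscr{A}) \subseteq {M'}^\perp = M^\perp$. This condition asks precisely that $\Ext^1(M, P') = 0$ for every projective object $P'$, and that is exactly the statement $M \in {}^\perp Proj(\mathscr{A})$ built into the definition of strong Gorenstein projectivity. So the extra hypothesis is satisfied for free, and Proposition \ref{M'-Q-M} then yields ${{\underline {\mathfrak{Pr}}}_{\mathscr{A}}}^{-1}(M) = M^\perp$, as claimed.

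There is essentially no computational obstacle here; the entire content is the bookkeeping of matching the two formulations of the orthogonality condition. The one point that deserves care is the identification $M' = M$, which is what allows us to replace ${M'}^\perp$ by $M^\perp$ throughout, and which explains why it is the \emph{strong} hypothesis (rather than mere Gorenstein projectivity) that makes the clean equality ${{\underline {\mathfrak{Pr}}}_{\mathscr{A}}}^{-1}(M) = M^\perp$ fall out so directly.
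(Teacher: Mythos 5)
Your proof is correct and is precisely the argument the paper intends: the corollary is stated as an immediate application of Proposition \ref{M'-Q-M} to the defining sequence $0\to M\to P\to M\to 0$ with $M'=M$, the hypothesis $Proj(\mathscr{A})\subseteq M^\perp$ being exactly the condition $M\in {}^\perp Proj(\mathscr{A})$ from the definition of strong Gorenstein projectivity. Nothing is missing.
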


The converse of Corollary \ref{cor-subproj-sgoren-ob} does not hold in general. A counterexample can be found in commutative local artinian principal ideal rings: in \cite{BeHuWa15} it is proved that over one such a ring every module is $2$-strongly Gorenstein projective, that is, there exists an exact sequence $0 \to M \to P_2 \to P_1\to M \to 0$ with $P_1$ and $P_2$ projective and  $M \in {}^\perp{Proj}(\mathscr{A}) $. Thus, using \cite{BMO} one can prove that if the ring admits more than two proper ideals then the maximal ideal cannot be  strongly Gorenstein projective. So for instance, the ideal $(2+8\Z)$ of the ring $\Z/8\Z$ is not strongly Gorenstein projective. However we do have ${\underline {\mathfrak{Pr}}}_{(\Z/8\Z){\rm -Mod}}^{-1}(2+8\Z)=(2+8\Z)^\perp$ by the following result.

\begin{prop}
If $R$ is a commutative local  artinian principal ideal ring, then for every  module $M$, ${\underline {\mathfrak{Pr}}}_{R-\Mod}^{-1}(M)=M^\perp$.
\end{prop}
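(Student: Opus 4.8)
The plan is to exploit the extremely rigid module theory of such a ring. First I would record the structural facts: being local artinian principal ideal, $R$ has maximal ideal $\mathfrak m=(t)$ with $t^{n}=0$ for some $n\geq 1$, and its ideals form the finite chain $R=(t^{0})\supset(t^{1})\supset\cdots\supset(t^{n})=0$. The socle $\mathrm{Ann}_{R}(t)=(t^{n-1})\cong R/\mathfrak m$ is simple, so $R$ is quasi-Frobenius, hence self-injective. Two consequences will be used repeatedly: every projective module is injective, so $\Ext^{1}(X,P)=0$ for all $X$ and all projective $P$, i.e. $Proj(R\text{-}\Mod)\subseteq X^{\perp}$ for every $X$; and, as always, $Proj(R\text{-}\Mod)\subseteq{\underline {\mathfrak{Pr}}}_{R-\Mod}^{-1}(M)$ since an epimorphism onto a projective splits.

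Next I would reduce to cyclic modules. Over a commutative artinian principal ideal ring every module is a direct sum of cyclic modules $R/(t^{a})$. Both operators convert such a decomposition in the first variable into an intersection: from $\Ext^{1}(\bigoplus_{i}M_{i},N)\cong\prod_{i}\Ext^{1}(M_{i},N)$ we get $(\bigoplus_{i}M_{i})^{\perp}=\bigcap_{i}M_{i}^{\perp}$, while a morphism out of $\bigoplus_{i}M_{i}$ factors through a projective as soon as each restriction $M_{i}\to N$ does (assemble the chosen factorizations into a single map out of the direct sum of the intermediate projectives), and direct summands inherit subprojectivity, so Proposition \ref{prop-facto} gives ${\underline {\mathfrak{Pr}}}_{R-\Mod}^{-1}(\bigoplus_{i}M_{i})=\bigcap_{i}{\underline {\mathfrak{Pr}}}_{R-\Mod}^{-1}(M_{i})$. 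Hence it suffices to prove the equality for $M=R/(t^{a})$; the degenerate cases $a=0$ and $a=n$ (where $M$ is zero or free) are covered by Proposition \ref{prop-whole-cat-obj}.

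For $M=R/(t^{a})$ with $0<a<n$ I would use the short exact sequence $0\to R/(t^{a})\stackrel{t^{\,n-a}}{\longrightarrow}R\to R/(t^{n-a})\to 0$, whose middle term is projective and whose cokernel is the first syzygy $K=R/(t^{n-a})$ of $M$. Since $Proj(R\text{-}\Mod)\subseteq K^{\perp}$ by self-injectivity, Proposition \ref{M'-Q-M} applies and yields ${\underline {\mathfrak{Pr}}}_{R-\Mod}^{-1}(R/(t^{a}))=(R/(t^{n-a}))^{\perp}$. The whole statement therefore collapses to a single claim about orthogonal classes: for every $0<c<n$ one has $(R/(t^{c}))^{\perp}=Proj(R\text{-}\Mod)$. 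Granting this and applying it to $c=n-a$ and to $c=a$ gives ${\underline {\mathfrak{Pr}}}_{R-\Mod}^{-1}(M)=(R/(t^{n-a}))^{\perp}=Proj(R\text{-}\Mod)=(R/(t^{a}))^{\perp}=M^{\perp}$, as desired.

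To prove the claim I would compute $\Ext$ from the $2$-periodic free resolution $\cdots\stackrel{t^{a}}{\longrightarrow}R\stackrel{t^{\,n-a}}{\longrightarrow}R\stackrel{t^{a}}{\longrightarrow}R\to R/(t^{a})\to 0$, obtaining $\Ext^{1}(R/(t^{c}),N)\cong\mathrm{Ann}_{N}(t^{\,n-c})/t^{c}N$ for every module $N$. Reducing $N$ again to cyclic summands $R/(t^{b})$, this annihilator quotient is seen to be nonzero exactly when $0<b<n$; thus $\Ext^{1}(R/(t^{c}),N)=0$ forces every cyclic summand of $N$ to be projective, whence $(R/(t^{c}))^{\perp}=Proj(R\text{-}\Mod)$. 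This non-vanishing of $\Ext^{1}(R/(t^{c}),R/(t^{b}))$ for all non-projective cyclics is the main obstacle: it is where the special arithmetic of the chain ring enters, and it cannot be bypassed by $2$-periodicity alone, which only delivers $\Ext^{1}(R/(t^{n-a}),N)\cong\Ext^{2}(R/(t^{a}),N)$ and gives no a priori comparison between the vanishing of $\Ext^{1}$ and of $\Ext^{2}$. What makes the two conditions genuinely coincide is the symmetry $c\leftrightarrow n-c$ visible in the explicit formula (equivalently, the self-duality $\Hom_{R}(R/(t^{a}),R)\cong R/(t^{a})$ afforded by the Gorenstein structure).
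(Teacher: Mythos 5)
Your proof is correct, but it follows a genuinely different and more self-contained route than the paper's. The paper gets the inclusion ${\underline {\mathfrak{Pr}}}_{R-\Mod}^{-1}(M)\subseteq M^\perp$ essentially for free by citing \cite[Proposition 4.5]{Sergio} (over such a ring every non-projective module has subprojectivity domain exactly $Proj(R\mbox{-}\Mod)$), and then it only has to check $M^\perp=Proj(R\mbox{-}\Mod)$, which it does via the same Köthe decomposition into cyclics that you use, quasi-Frobeniusness for one inclusion, and a citation of \cite[Example 4.5]{Tri} for the non-vanishing of $\Ext^1$ between non-projective cyclics. You replace both external inputs by internal arguments: the subprojectivity domain of each cyclic $R/(t^a)$ is computed from Proposition \ref{M'-Q-M} applied to the embedding $R/(t^a)\stackrel{t^{n-a}}{\hookrightarrow}R$ with syzygy $R/(t^{n-a})$ (the hypothesis $Proj\subseteq (R/(t^{n-a}))^\perp$ being supplied by self-injectivity), and the non-vanishing $\Ext^1(R/(t^c),R/(t^b))\neq 0$ for $0<b,c<n$ is proved directly from the $2$-periodic free resolution; the reduction of both sides to cyclic summands via Proposition \ref{dsum} and the additivity of $\Ext^1$ is sound. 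What the paper's route buys is brevity; what yours buys is independence from \cite{Sergio} and \cite{Tri}, an explicit formula $\Ext^1(R/(t^c),N)\cong\mathrm{Ann}_N(t^{n-c})/t^cN$ exhibiting the $c\leftrightarrow n-c$ symmetry that makes the statement true, a recovery of \cite[Proposition 4.5]{Sergio} for these rings as a byproduct, and a concrete illustration of Proposition \ref{M'-Q-M} in the spirit of the surrounding discussion (the subprojectivity domain of $M$ is realized as $N^\perp$ for the \emph{different} object $N=R/(t^{n-a})$ before one observes that it coincides with $M^\perp$). One small point worth making explicit if you write this up: the passage from ``every cyclic summand of $N$ is projective'' to ``$N\in Proj$'' uses that $\Ext^1(R/(t^c),-)$ is additive on direct summands, which is all you need, so no commutation of $\Ext$ with infinite direct sums has to be invoked.
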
  
\begin{proof} 
We can assume that $M$ is a non-projective module. 

By \cite[Proposition 4.5]{Sergio} we know that ${\underline {\mathfrak{Pr}}}_{R-\Mod}^{-1}(M)=Proj(R\mbox{-\Mod})$ so if we prove that $M^\perp=Proj(R\mbox{-\Mod})$ we will be done. 

Now, since $R$ is a commutative local artinian principal ideal ring, every module is a direct sum of cyclic modules, and the only composition series of the ring is $$0 = x^mR \subseteq  x^{m-1}R \subseteq  \cdots \subseteq  xR = Rad(R) \subseteq  R,$$ where $x$ is a generator of $Rad(R)$ (the Jacobson radical of $R$). Therefore, the result will follow if we show that $Proj(R\mbox{-\Mod})=(R/x^iR)^\perp$ for every $ 0<i<m$. 

Of course we have $Proj(R\mbox{-\Mod})\subseteq (R/x^iR)^\perp$ since $Proj(R\mbox{-\Mod})=Inj(R\mbox{-\Mod})$ ($R$ is a QF-ring). And on the other hand, if $N$ is a non projective module then there is an $i$ such that $R/x^iR$ is a direct summand of $R$. This means $\Ext_R(R/x^iR,R/x^jR)$ is a direct summand of $\Ext_R(N,R/x^jR)$ for every $j$. But $\Ext_R(R/x^iR,R/x^jR)\neq 0$ for all $0 < i,j < m$ by \cite[Example 4.5]{Tri} so we are done.
\end{proof}

Proposition \ref{M'-Q-M} says that if an object $M$ can be embedded in a projective object then there exists an object $M'$ such that $(M')^\perp\subseteq {\underline {\mathfrak{Pr}}}_{\mathscr{A}}^{-1}(M)$. Therefore, $ {\underline {\mathfrak{Pr}}}_{\mathscr{A}}^{-1}(M)$ contains the class of injective objects. This fact was proved using different arguments in \cite[Lemma 2.2]{Dur1} by giving a list of equivalences. The following result extends such a list of equivalent conditions.

\begin{cor}\label{cor-emb-proj} 
Assume that $\mathscr{A}$ has enough injectives and let $M$ be an object of $\mathscr A$. The following conditions are equivalent: 
\begin{enumerate}
	\item $M$ is   embedded in a projective object $P$.
	\item There exists an object $M'$ such that $(M')^\perp\subseteq {\underline {\mathfrak{Pr}}}_{\mathscr{A}}^{-1}(M)$.
	\item $Inj(\mathscr A)\subseteq  {\underline {\mathfrak{Pr}}}_{\mathscr{A}}^{-1}(M)$.
\end{enumerate}
\end{cor}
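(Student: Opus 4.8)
The plan is to prove the cycle of implications $(1)\Rightarrow(2)\Rightarrow(3)\Rightarrow(1)$, leaning throughout on the two descriptions already available: Proposition \ref{M'-Q-M}, which turns an embedding into a projective object into an inclusion of a right Ext-orthogonal class, and Proposition \ref{prop-facto}, which recasts subprojectivity as a factorization property of morphisms.

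For $(1)\Rightarrow(2)$ I would begin with a monomorphism $M\to P$ with $P$ projective, complete it to a short exact sequence $0\to M\to P\to M'\to 0$ by letting $M'$ be its cokernel, and then apply Proposition \ref{M'-Q-M} verbatim. This immediately delivers ${M'}^\perp\subseteq{\underline {\mathfrak{Pr}}}_{\mathscr{A}}^{-1}(M)$, so the object $M'$ is exactly the witness demanded by $(2)$.

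For $(2)\Rightarrow(3)$ the key observation is that every injective object lies in \emph{any} right Ext-orthogonal class: if $E$ is injective then $\Ext^1(M',E)=0$, so $E\in{M'}^\perp$, whence $Inj(\mathscr A)\subseteq{M'}^\perp$. Combining this with the inclusion provided by $(2)$ gives $Inj(\mathscr A)\subseteq{M'}^\perp\subseteq{\underline {\mathfrak{Pr}}}_{\mathscr{A}}^{-1}(M)$, which is precisely $(3)$.

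The substantial step is $(3)\Rightarrow(1)$, and it is the only place where the standing hypothesis that $\mathscr A$ has enough injectives is used. I would embed $M$ in an injective object via a monomorphism $\iota:M\to E$. By $(3)$ we have $E\in{\underline {\mathfrak{Pr}}}_{\mathscr{A}}^{-1}(M)$, that is, $M$ is $E$-subprojective, so Proposition \ref{prop-facto} guarantees that $\iota$ factors through a projective object: there are morphisms $\alpha:M\to P$ and $\beta:P\to E$ with $P$ projective and $\iota=\beta\alpha$. Since $\iota$ is a monomorphism and $\iota=\beta\alpha$, the morphism $\alpha$ is forced to be a monomorphism as well (if $\alpha u=0$ then $\iota u=\beta\alpha u=0$, so $u=0$), exhibiting $M$ as embedded in the projective object $P$. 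I expect this last implication to be the main obstacle; its crux is the realization that the factorization characterization of subprojectivity, applied to an embedding of $M$ into an injective, produces for free the projective object into which $M$ embeds.
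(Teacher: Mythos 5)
Your proposal is correct and follows essentially the same route as the paper: $(1)\Rightarrow(2)$ via Proposition \ref{M'-Q-M}, $(2)\Rightarrow(3)$ because injectives lie in every right Ext-orthogonal class, and $(3)\Rightarrow(1)$ by factoring an embedding of $M$ into an injective through a projective object. The paper merely outsources the last implication to the argument of \cite[Lemma 2.2]{Dur1} (which lifts the embedding along an epimorphism from a projective onto $E$ rather than invoking Proposition \ref{prop-facto}), but this is the same idea and your monicity check for $\alpha$ is the right one.
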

\begin{proof}
$(1)\Rightarrow(2)$ Proposition \ref{M'-Q-M}.

$(2)\Rightarrow(3)$ Clear.

$(3)\Rightarrow(1)$ Follows by similar arguments to those of \cite[Lemma 2.2]{Dur1}. 
\end{proof}

Now we prove that when the module is finitely generated (and can be embedded in a projective module) then its subprojectivity domain contains a larger class than that of the injectives. In what follows $FP$ will denote the class of all $FP$-injective modules, that is the class of right  Ext-orthogonal class of finitely presented modules.  Recall, from \cite[Proposition 6.2.4]{Enochs}, that the class $FP$ is
preenveloping. In particular,  every module embeds in an  $FP$-injective module.
 
\begin{cor}\label{cor-fg-emb-proj}
Let $M$ be a finitely generated module. The following conditions are equivalent:
\begin{enumerate}
	\item $M$ is   embedded in a projective module.
	\item There exists a finitely presented module $M'$ such that $(M')^\perp\subseteq  {{\underline {\mathfrak{Pr}}}}_{R-\Mod}^{-1}(M)$.
	\item $FP\subseteq  {\underline {\mathfrak{Pr}}}_{R-\Mod}^{-1}(M)$.
		\item For an $FP$-injective preenvelope $i: M \hookrightarrow  E$ of $M$,  $E \in  {\underline {\mathfrak{Pr}}}_{R-\Mod}^{-1}(M)$.
\end{enumerate}
\end{cor}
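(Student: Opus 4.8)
The plan is to prove the cycle of implications $(1)\Rightarrow(2)\Rightarrow(3)\Rightarrow(4)\Rightarrow(1)$, relying on Proposition \ref{M'-Q-M}, on the factorization criterion of Proposition \ref{prop-facto}, and on the fact that $FP$ is by definition the right $\Ext$-orthogonal class of the finitely presented modules.

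For $(1)\Rightarrow(2)$, I would first refine the embedding $M\hookrightarrow P$ into an embedding into a \emph{finitely generated} free module. Writing the projective $P$ as a direct summand of a free module $R^{(I)}$, we obtain a monomorphism $M\hookrightarrow R^{(I)}$; since $M$ is finitely generated, the images of a finite generating set have finite total support, so $M$ embeds into a finite free summand $R^n\subseteq R^{(I)}$. This produces a short exact sequence $0\to M\to R^n\to R^n/M\to 0$ in which $R^n$ is finitely generated free and, because $M$ is finitely generated, the cokernel $M':=R^n/M$ is finitely presented. Proposition \ref{M'-Q-M} (applied with $Q=R^n$) then yields $(M')^\perp\subseteq {\underline {\mathfrak{Pr}}}_{R-\Mod}^{-1}(M)$, which is precisely $(2)$.

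The remaining implications are then either formal or a direct application of Proposition \ref{prop-facto}. For $(2)\Rightarrow(3)$, since $M'$ is finitely presented and $FP$ is the right $\Ext$-orthogonal of all finitely presented modules, every $FP$-injective module $N$ satisfies $\Ext^1(M',N)=0$, i.e.\ $N\in (M')^\perp$; hence $FP\subseteq (M')^\perp\subseteq {\underline {\mathfrak{Pr}}}_{R-\Mod}^{-1}(M)$. For $(3)\Rightarrow(4)$, an $FP$-injective preenvelope $i:M\hookrightarrow E$ has $E$ itself $FP$-injective, so $E\in FP\subseteq {\underline {\mathfrak{Pr}}}_{R-\Mod}^{-1}(M)$. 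Finally, for $(4)\Rightarrow(1)$, the hypothesis $E\in {\underline {\mathfrak{Pr}}}_{R-\Mod}^{-1}(M)$ means $M$ is $E$-subprojective, so by Proposition \ref{prop-facto} the monomorphism $i:M\to E$ factors as $M\stackrel{\alpha}{\longrightarrow}Q\stackrel{\beta}{\longrightarrow}E$ through a projective object $Q$; since $i=\beta\alpha$ is monic, so is $\alpha$, and thus $M$ is embedded in the projective object $Q$, giving $(1)$.

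I expect the only genuine difficulty to lie in the first step, namely guaranteeing that $M'$ can be taken finitely presented: this is exactly where the finite generation of $M$ enters, through the finite-support reduction to a finitely generated free module. Once that reduction is in place, $(2)\Rightarrow(3)\Rightarrow(4)$ are immediate from the definition of $FP$-injectivity, and $(4)\Rightarrow(1)$ is a one-line consequence of the factorization characterization already established in Proposition \ref{prop-facto}.
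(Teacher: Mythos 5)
Your proposal is correct and follows essentially the same route as the paper: embed $M$ in a finitely generated projective (you justify the finite-support reduction that the paper only asserts), take $M'$ to be the finitely presented cokernel and invoke Proposition \ref{M'-Q-M}, note $FP\subseteq (M')^\perp$, and for $(4)\Rightarrow(1)$ extract a monomorphism into a projective from the factorization of $i$ (the paper lifts $i$ along an epimorphism $P\to E$, which is the same argument via Proposition \ref{prop-facto}).
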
 
\begin{proof}
$(1)\Rightarrow(2)$. Since $M$ is finitely generated, $M$ can be embedded in a finitely generated projective  module $F$, so $F/M$ is finitely presented. 

$(2)\Rightarrow(3)$ holds  since  $FP\subseteq  (M')^\perp $.

$(3)\Rightarrow(4)$ Clear.

$(4)\Rightarrow(1)$. Let $g: P \to E$ be an epimorphism with $P$ projective. Since $E\in  {\underline {\mathfrak{Pr}}}_{R-\Mod}^{-1}(M)$, the diagram $$\xymatrix{ & M \ar[d]^i \ar@{-->}[ld]_h \\ P \ar[r]_g &  E \ar[r] & 0}$$ can be completed commutatively by $h$. Thus, $h$ must be injective.
\end{proof}
 
 The class of pure-injective modules is another interesting class which also contains the class of injective modules. Thus, it is natural to ask whether there is the ``pure-injective" version of Corollary \ref{cor-fg-emb-proj}.  This question seems not to have a direct proof as the one given for Corollary \ref{cor-fg-emb-proj}	since, unlike the $FP$-injective modules which can be defined as the right  Ext-orthogonal class of finitely presented modules, the pure-injective modules are defined, in the context of relative homological algebra, as  the injective objects with respect to pure-exact sequences. This requires to add, as pointed out by the referee, the purity condition on the morphisms.\\
 Let us denote by $PE$ the class of all  pure-injective modules.  
 
 \begin{prop}\label{pro-emb-proj}
Let $M$ be a  module. The following conditions are equivalent:
\begin{enumerate}
	\item $M$ is purely embedded in a projective module; that is, there is a pure monomorphism $M  \hookrightarrow  P$ for some projective module $P$. 
	\item $PE\subseteq  {\underline {\mathfrak{Pr}}}_{R-\Mod}^{-1}(M)$.
		\item For any $PE$-injective preenvelope $i: M \hookrightarrow  E$ of $M$,  $E \in  {\underline {\mathfrak{Pr}}}_{R-\Mod}^{-1}(M)$.
\end{enumerate}
\end{prop}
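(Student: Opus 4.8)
The plan is to establish the cyclic chain $(1)\Rightarrow(2)\Rightarrow(3)\Rightarrow(1)$, mirroring the argument of Corollary \ref{cor-fg-emb-proj} but replacing the $FP$-injective machinery by the extension property that characterizes pure-injective modules. The central tool will be Proposition \ref{prop-facto}, which lets us replace the assertion that $M$ is $E$-subprojective by the statement that every morphism $M\to E$ factors through a projective object.

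For $(1)\Rightarrow(2)$, I would fix a pure monomorphism $i:M\hookrightarrow P$ with $P$ projective and take an arbitrary $E\in PE$ together with a morphism $f:M\to E$. Because $E$ is pure-injective and $i$ is a pure monomorphism, $f$ extends along $i$ to some $g:P\to E$ with $gi=f$, so $f$ factors through the projective object $P$. Since $f$ was arbitrary, Proposition \ref{prop-facto} yields $E\in{\underline {\mathfrak{Pr}}}_{R-\Mod}^{-1}(M)$, and as $E$ ranged over all of $PE$ this is precisely $(2)$.

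The implication $(2)\Rightarrow(3)$ is immediate, since a $PE$-preenvelope $i:M\hookrightarrow E$ has $E\in PE$ by definition and therefore $E\in{\underline {\mathfrak{Pr}}}_{R-\Mod}^{-1}(M)$ by $(2)$. For $(3)\Rightarrow(1)$ I would apply $(3)$ to the pure-injective envelope $i:M\hookrightarrow E$, which is a pure monomorphism into a pure-injective module and hence one of the preenvelopes covered by the hypothesis; thus $E\in{\underline {\mathfrak{Pr}}}_{R-\Mod}^{-1}(M)$, so $M$ is $E$-subprojective. Picking an epimorphism $g:P\to E$ with $P$ projective, subprojectivity lifts $i$ through $g$ to a morphism $h:M\to P$ with $gh=i$, and it remains only to verify that $h$ is a pure monomorphism into $P$.

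The hard part will be this final purity check, and it is exactly where the referee's purity hypothesis is needed. Since $i=gh$ is a pure monomorphism, tensoring with an arbitrary right $R$-module $A$ makes $A\otimes_R i=(A\otimes_R g)(A\otimes_R h)$ injective, which forces the first factor $A\otimes_R h$ to be injective; as $A$ is arbitrary, $h$ is pure. Whereas in Corollary \ref{cor-fg-emb-proj} mere injectivity of the lift was enough to produce an embedding into a projective, here one must transfer purity from the composite $gh$ down to the factor $h$, and this is what upgrades $M\hookrightarrow P$ to a pure embedding and completes $(3)\Rightarrow(1)$.
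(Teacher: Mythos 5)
Your proposal is correct and follows essentially the same route as the paper: $(1)\Rightarrow(2)$ via the extension property of pure-injectives along the given pure monomorphism together with Proposition \ref{prop-facto}, and $(3)\Rightarrow(1)$ by lifting the pure-injective envelope $M\hookrightarrow PE(M)$ through a projective cover of $PE(M)$. The only difference is that you spell out, via the tensor characterization of purity, why the lift $h$ inherits purity from the composite $gh$ — a detail the paper leaves implicit with ``in particular, $h$ is a pure monomorphism.''
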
 
\begin{proof}
 $(1)\Rightarrow(2)$. Let $E$ be a pure-injective module and $f:M\to E$ be any morphism. By hypothesis, there is a pure monomorphism $\alpha:M  \hookrightarrow  P$ for some projective module $P$. Then, there exists a morphism $\beta: P \to  E$ such that $ \beta\alpha =f $, which shows that $f$ factors through a projective module and so $  E\in {\underline {\mathfrak{Pr}}}_{R-\Mod}^{-1}(M)$.
 
 $(2)\Rightarrow(3)$ Clear.
 
 $(3)\Rightarrow(1)$. Let $PE(M)$ denote the pure-injective envelope of $M$. Then, there is a pure monomorphism $\iota :M  \hookrightarrow  PE(M)$.
 Let $g:P\to PE(M)$ be an epimorphism with $P$ is projective. Since $PE(M)\in  {\underline {\mathfrak{Pr}}}_{R-\Mod}^{-1}(M)$,  there exists a morphism $h: M\to P$ such that $gh=\iota$; in particular, $h$ is a pure monomorphism, as desired. 
 \end{proof}

 Notice that this last fact might also hold in more general contexts than those of modules by considering Herzog's main result of \cite{Herzog}, which shows that if $\mathscr{A}$ is a locally finitely presented additive category, then every object of $\mathscr{A}$ admits a pure-injective envelope. \\

We now fix our attention on classes of objects: we introduce and investigate subprojectivity domains of classes instead of just single objects. The subprojectivity domain of a class $X$ is defined as the class of all objects holding in the subprojectivity domain of all objects of $X$.

\begin{Def}
The \textit{subprojectivity domain}, or \textit{domain of subprojectivity}, of a class of objects $\mathcal{M}$ of $\A$ is defined as $${\underline {\mathfrak{Pr}}}_{\mathscr{A}}^{-1}(\mathcal{M}):=\{ N \in {\mathscr{A}}: M \ is \ N \text {-subprojective for every  } M \in \mathcal{M}\}.$$
\end{Def}

Therefore, if $\mathcal{M} := \{M\}$ then  ${{\underline {\mathfrak{Pr}}}_{\mathscr{A}}}^{-1}(\mathcal{M})={{\underline {\mathfrak{Pr}}}_{\mathscr{A}}}^{-1}(M).$

Proposition \ref{prop-whole-cat-obj} characterizes when the subprojectivity domain of an object is the whole abelian category $\mathscr{A}$. The following extension to classes of such a proposition can be used to unify various classical results.

\begin{prop}\label{prop-whole-cat}
Let $\mathcal{L}$ be a class of objects of $\mathscr{A}$. Then the following conditions are equivalent:
\begin{enumerate}
\item  ${\underline {\mathfrak{Pr}}}_{\mathscr{A}}^{-1}(\mathcal{L})$ is the whole abelian category $\mathscr A$.
\item Every object of $\mathcal{L}$ is projective.
\item  $\mathcal{L} \subseteq {\underline {\mathfrak{Pr}}}_{\mathscr{A}}^{-1}(\mathcal{L})$.
\end{enumerate} 
\end{prop}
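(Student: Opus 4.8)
The plan is to reduce the whole statement to the single-object version already established in Proposition~\ref{prop-whole-cat-obj}, using the one structural fact that the subprojectivity domain of a class is the intersection of the subprojectivity domains of its members. Directly from the definition,
$${\underline {\mathfrak{Pr}}}_{\mathscr{A}}^{-1}(\mathcal{L}) = \bigcap_{M\in\mathcal{L}} {\underline {\mathfrak{Pr}}}_{\mathscr{A}}^{-1}(M),$$
so in particular ${\underline {\mathfrak{Pr}}}_{\mathscr{A}}^{-1}(\mathcal{L})\subseteq {\underline {\mathfrak{Pr}}}_{\mathscr{A}}^{-1}(M)$ for every $M\in\mathcal{L}$. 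This containment, together with its consequence that the class domain equals $\mathscr{A}$ exactly when every object domain does, is the only ingredient I need beyond Proposition~\ref{prop-whole-cat-obj}.

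First I would prove $(1)\Rightarrow(2)$. Fixing an arbitrary $M\in\mathcal{L}$, if ${\underline {\mathfrak{Pr}}}_{\mathscr{A}}^{-1}(\mathcal{L})$ is all of $\mathscr{A}$ then the containment above forces ${\underline {\mathfrak{Pr}}}_{\mathscr{A}}^{-1}(M)=\mathscr{A}$ as well, so $M$ is projective by the equivalence $(1)\Leftrightarrow(2)$ of Proposition~\ref{prop-whole-cat-obj}; since $M$ was arbitrary, every object of $\mathcal{L}$ is projective. Next, for $(2)\Rightarrow(3)$, if each $M\in\mathcal{L}$ is projective then Proposition~\ref{prop-whole-cat-obj} gives ${\underline {\mathfrak{Pr}}}_{\mathscr{A}}^{-1}(M)=\mathscr{A}$ for every such $M$, and the intersection formula makes ${\underline {\mathfrak{Pr}}}_{\mathscr{A}}^{-1}(\mathcal{L})=\mathscr{A}$; the inclusion $\mathcal{L}\subseteq\mathscr{A}={\underline {\mathfrak{Pr}}}_{\mathscr{A}}^{-1}(\mathcal{L})$ is then immediate.

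Finally I would close the cycle with $(3)\Rightarrow(1)$, again routing through projectivity. Given $\mathcal{L}\subseteq {\underline {\mathfrak{Pr}}}_{\mathscr{A}}^{-1}(\mathcal{L})$ and any $M\in\mathcal{L}$, the containment yields $M\in {\underline {\mathfrak{Pr}}}_{\mathscr{A}}^{-1}(\mathcal{L})\subseteq {\underline {\mathfrak{Pr}}}_{\mathscr{A}}^{-1}(M)$, so $M\in{\underline {\mathfrak{Pr}}}_{\mathscr{A}}^{-1}(M)$ and the implication $(3)\Rightarrow(2)$ of Proposition~\ref{prop-whole-cat-obj} makes $M$ projective; this returns us to hypothesis $(2)$, whose consequence $(1)$ has just been recorded. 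There is no genuine obstacle here: the entire content is carried by Proposition~\ref{prop-whole-cat-obj}, and the single nontrivial fact being borrowed is the self-test criterion that $M\in{\underline {\mathfrak{Pr}}}_{\mathscr{A}}^{-1}(M)$ already forces $M$ to be projective. The only mild subtlety is that condition $(3)$ never mentions $\mathscr{A}$ directly, so it cannot be matched against the whole category set-theoretically and must instead be passed through projectivity.
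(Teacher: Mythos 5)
Your proof is correct and follows essentially the same route as the paper's: both arguments reduce the statement to the object-level Proposition~\ref{prop-whole-cat-obj} via the observation that ${\underline {\mathfrak{Pr}}}_{\mathscr{A}}^{-1}(\mathcal{L})\subseteq{\underline {\mathfrak{Pr}}}_{\mathscr{A}}^{-1}(M)$ for each $M\in\mathcal{L}$, with the key step in both being that $M\in{\underline {\mathfrak{Pr}}}_{\mathscr{A}}^{-1}(M)$ forces $M$ projective. The only cosmetic difference is that the paper gets $(2)\Rightarrow(3)$ directly from $Proj(\mathscr{A})\subseteq{\underline {\mathfrak{Pr}}}_{\mathscr{A}}^{-1}(\mathcal{L})$, while you route it through the intersection formula; both are immediate.
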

\begin{proof} To prove $(1) \Rightarrow (2) $, let $L$ in $\mathcal{L}$ and $P \to L$ be an epimorphism with $P$ is projective. Since  $L \in {\underline {\mathfrak{Pr}}}_{\mathscr{A}}^{-1}(L)$, $P \to L$ splits. Hence $L$ is projective.

The implication $(2) \Rightarrow (3) $ is clear since ${\underline {\mathfrak{Pr}}}_{\mathscr{A}}^{-1}(\mathcal{L})$ contains the class of projectives.

To prove $(3) \Rightarrow (1) $, consider an object $L$ in $\mathcal{L}$. By assumption  $L \in {\underline {\mathfrak{Pr}}}_{\mathscr{A}}^{-1}(L)$, hence $L$ is projective (Proposition \ref{prop-whole-cat-obj}). So ${\underline {\mathfrak{Pr}}}_{\mathscr{A}}^{-1}(L)$ coincide with $\mathscr A$ for any $L$ in $\mathcal{L}$. Therefore, ${\underline {\mathfrak{Pr}}}_{\mathscr{A}}^{-1}(\mathcal{L})$ is $\mathscr A$.
\end{proof}

It is clear that if $X$ is a subclass of a class $Y$, then ${\underline {\mathfrak{Pr}}}_{\mathscr{A}}^{-1}(Y) \subseteq  {\underline {\mathfrak{Pr}}}_{\mathscr{A}}^{-1}(X)$.  The following results show how far we can modify classes while preserving the same  subprojectivity domain.

We start with a result which shows that reducing classes to a singleton while preserving the same subprojectivity domain is possible. This is based on the following observation. Recall that if $S$ is the representative set of finitely presented modules, then it is known that a module $F$ is flat if and only if $\Hom(\oplus_{M\in S} M,-)$ makes exact every short exact sequence of the form $0 \to A \to B \to F \to 0$. This means that ${{\underline {\mathfrak{Pr}}}_{\mathscr{A}}}^{-1}(\oplus_{M\in S} M)$ is the class of flat modules (as proved in \cite[Proposition 2.1]{Dur1}). The following result, which was already proven for the category of modules in \cite[Proposition 2.10]{Sergio}, is a generalization of this fact.

\begin{prop} \label{dsum}
Suppose that $\mathscr{A}$ has direct sums and let $\{M_i\}_{i\in I}$ be a set of objects in $\mathscr{A}$. Then ${{\underline {\mathfrak{Pr}}}_{\mathscr{A}}}^{-1}(\oplus_{i\in I}M_i)={{\underline {\mathfrak{Pr}}}_{\mathscr{A}}}^{-1}(\{M_i\}_{i\in I})$.
\end{prop}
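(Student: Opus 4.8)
The plan is to prove the two inclusions separately, working directly from the definition of $N$-subprojectivity and exploiting the universal property of the coproduct $\oplus_{i\in I}M_i$, whose existence is guaranteed by the hypothesis that $\mathscr{A}$ has direct sums. Throughout I would write $\iota_i:M_i\to\oplus_{i\in I}M_i$ for the canonical injections.

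For the inclusion ${{\underline {\mathfrak{Pr}}_{\mathscr{A}}}^{-1}}(\oplus_{i\in I}M_i)\subseteq{{\underline {\mathfrak{Pr}}_{\mathscr{A}}}^{-1}}(\{M_i\}_{i\in I})$, I would fix $N$ in the left-hand class and show that each $M_i$ is $N$-subprojective. Given a morphism $f:M_i\to N$ and an epimorphism $g:K\to N$, I extend $f$ to a morphism $\tilde{f}:\oplus_{j}M_j\to N$ determined by the universal property via $\tilde{f}\iota_i=f$ and $\tilde{f}\iota_j=0$ for $j\neq i$. Since $\oplus_j M_j$ is $N$-subprojective, $\tilde{f}$ lifts along $g$ to some $\tilde{h}$ with $g\tilde{h}=\tilde{f}$, and then $h:=\tilde{h}\iota_i$ is the desired lift of $f$, because $gh=g\tilde{h}\iota_i=\tilde{f}\iota_i=f$.

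For the reverse inclusion I would fix $N$ in ${{\underline {\mathfrak{Pr}}_{\mathscr{A}}}^{-1}}(\{M_i\}_{i\in I})$ and show that $\oplus_i M_i$ is $N$-subprojective. Given $f:\oplus_i M_i\to N$ and an epimorphism $g:K\to N$, each restriction $f\iota_i:M_i\to N$ lifts, by the $N$-subprojectivity of $M_i$, to a morphism $h_i:M_i\to K$ with $gh_i=f\iota_i$. The universal property of the coproduct then assembles the family $\{h_i\}$ into a single morphism $h:\oplus_i M_i\to K$ satisfying $h\iota_i=h_i$, and the two morphisms $gh$ and $f$ agree after precomposition with every $\iota_i$, whence $gh=f$.

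The argument is essentially formal once the universal property is invoked, so there is no genuine obstacle; the only points deserving a little care are the appeal to the \emph{uniqueness} clause of the coproduct property to pass from component-wise agreement to the equality $gh=f$ in the second inclusion, and the construction of $\tilde{f}$ with vanishing off-diagonal components in the first. Alternatively, the whole statement could be phrased through Proposition \ref{prop-facto}, by factoring each $f\iota_i$ through a projective object $P_i$ and assembling a factorization of $f$ through the projective object $\oplus_i P_i$; this is equivalent but has the minor overhead of requiring the observation that a coproduct of projective objects is again projective in $\mathscr{A}$.
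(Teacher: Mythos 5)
Your proof is correct and is essentially the paper's argument: the paper fixes an epimorphism $g:K\to N$ and uses the natural isomorphism $\Hom(\oplus_{i}M_i,-)\cong\prod_{i}\Hom(M_i,-)$ to conclude that $\Hom(\oplus_i M_i,g)$ is epic if and only if each $\Hom(M_i,g)$ is, which is precisely what your two inclusions unpack via the universal property of the coproduct. No gap.
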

\begin{proof}
Let $g: K\to N$ be an epimorphism. The following diagram is commutative $$\xymatrix{\Hom(\oplus_{i\in I} M_i, K)  \ar[d]_{\psi^K}  \ar[rr]^{\Hom(\oplus_{i\in I} M_i, g)} && \Hom(\oplus_{i\in I} M_i, N) \ar[d]^{\psi^N} \\ \prod_{i\in I}\Hom( M_i, K) \ar[rr] & &\prod_{i\in I}\Hom( M_i, N)}$$ where $\psi^K$ and $\psi^N$ are isomorphisms. Hence the morphism ${\Hom(\oplus_{i\in I} M_i, g)}$ is epic if and only if $\prod_{i\in I}\Hom( M_i,g)$ is epic. Therefore $N\in {{\underline {\mathfrak{Pr}}}_{\mathscr{A}}}^{-1}(\oplus_{i\in I}M_i)$ if and only if $N\in {{\underline {\mathfrak{Pr}}}_{\mathscr{A}}}^{-1}(M_i)$ for every ${i\in I}$.
\end{proof}

We now give an extension of Proposition \ref{dsum}. For we will use the following known terminology: if $\mathcal{L}$ is a class of objects of $\mathscr A$, we denote by $Sum (\mathcal{L})$ the class of all objects which are isomorphic to direct sums of objects of $\mathcal{L}$, by $Summ (\mathcal{L})$ the class of all objects which are isomorphic to direct summands of objects of $\mathcal{L}$, and by $Add(\mathcal{L})$ the class $Summ (Sum (\mathcal{L}))$.

\begin{prop}\label{prop-sub-dom-Add}
Let $\mathcal{L}$ be a class of objects of $\mathscr A$. Then $${{\underline {\mathfrak{Pr}}}_{\mathscr{A}}}^{-1}(Add(\mathcal{L}))= {{\underline {\mathfrak{Pr}}}_{\mathscr{A}}}^{-1}(Sum(\mathcal{L}))={{\underline {\mathfrak{Pr}}}_{\mathscr{A}}}^{-1}(Summ(\mathcal{L}))={{\underline {\mathfrak{Pr}}}_{\mathscr{A}}}^{-1}(\mathcal{L}).$$

If $\mathcal{L}$ is a set, then all these classes coincide with the class $ {{\underline {\mathfrak{Pr}}}_{\mathscr{A}}}^{-1}(\oplus_{L\in \mathcal{L}}L).$
\end{prop}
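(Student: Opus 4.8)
The plan is to exploit that ${{\underline {\mathfrak{Pr}}}_{\mathscr{A}}}^{-1}(-)$ is order-reversing (the inclusion noted just before the statement: a larger class has a smaller subprojectivity domain), so that only one genuine inclusion remains to be checked. Since every object is isomorphic to both a direct sum and a direct summand of itself, we have $\mathcal{L}\subseteq Sum(\mathcal{L})\subseteq Add(\mathcal{L})$ and $\mathcal{L}\subseteq Summ(\mathcal{L})\subseteq Add(\mathcal{L})$. Applying the order-reversing operator gives
$${{\underline {\mathfrak{Pr}}}_{\mathscr{A}}}^{-1}(Add(\mathcal{L}))\subseteq {{\underline {\mathfrak{Pr}}}_{\mathscr{A}}}^{-1}(Sum(\mathcal{L}))\subseteq {{\underline {\mathfrak{Pr}}}_{\mathscr{A}}}^{-1}(\mathcal{L}),\quad {{\underline {\mathfrak{Pr}}}_{\mathscr{A}}}^{-1}(Add(\mathcal{L}))\subseteq {{\underline {\mathfrak{Pr}}}_{\mathscr{A}}}^{-1}(Summ(\mathcal{L}))\subseteq {{\underline {\mathfrak{Pr}}}_{\mathscr{A}}}^{-1}(\mathcal{L}).$$
Hence it suffices to establish the single reverse inclusion ${{\underline {\mathfrak{Pr}}}_{\mathscr{A}}}^{-1}(\mathcal{L})\subseteq {{\underline {\mathfrak{Pr}}}_{\mathscr{A}}}^{-1}(Add(\mathcal{L}))$, after which all four classes are squeezed to equality.

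For this, I would fix $N\in {{\underline {\mathfrak{Pr}}}_{\mathscr{A}}}^{-1}(\mathcal{L})$ and show that every $X\in Add(\mathcal{L})$ is $N$-subprojective, reducing the problem to two closure properties of the family of objects that are $N$-subprojective. Closure under direct sums is exactly Proposition \ref{dsum}: for a set-indexed family $\{L_i\}_{i\in I}$ of objects of $\mathcal{L}$ one has ${{\underline {\mathfrak{Pr}}}_{\mathscr{A}}}^{-1}(\oplus_{i\in I}L_i)={{\underline {\mathfrak{Pr}}}_{\mathscr{A}}}^{-1}(\{L_i\}_{i\in I})$, and since $N\in {{\underline {\mathfrak{Pr}}}_{\mathscr{A}}}^{-1}(\mathcal{L})\subseteq {{\underline {\mathfrak{Pr}}}_{\mathscr{A}}}^{-1}(\{L_i\}_{i\in I})$, the object $\oplus_{i\in I}L_i$ is $N$-subprojective. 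Closure under direct summands I would derive from the factorization criterion of Proposition \ref{prop-facto}: if $M=M'\oplus M''$ is $N$-subprojective and $f:M'\to N$ is arbitrary, then $f\pi:M\to N$ (with $\pi:M\to M'$ the projection) factors through a projective object, say $f\pi=\beta\alpha$; post-composing with the inclusion $\iota:M'\to M$ and using $\pi\iota=1_{M'}$ yields $f=\beta(\alpha\iota)$, a factorization of $f$ through a projective, so $M'$ is $N$-subprojective again by Proposition \ref{prop-facto}.

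Combining the two closures finishes the reverse inclusion: an arbitrary $X\in Add(\mathcal{L})=Summ(Sum(\mathcal{L}))$ is a direct summand of some $Y=\oplus_{i\in I}L_i$ with each $L_i\in\mathcal{L}$; the direct-sum step makes $Y$ $N$-subprojective and the summand step then makes $X$ $N$-subprojective, so $N\in {{\underline {\mathfrak{Pr}}}_{\mathscr{A}}}^{-1}(Add(\mathcal{L}))$. For the last assertion, when $\mathcal{L}$ is a set the object $\oplus_{L\in\mathcal{L}}L$ belongs to $Sum(\mathcal{L})$, and one more application of Proposition \ref{dsum} gives ${{\underline {\mathfrak{Pr}}}_{\mathscr{A}}}^{-1}(\oplus_{L\in\mathcal{L}}L)={{\underline {\mathfrak{Pr}}}_{\mathscr{A}}}^{-1}(\mathcal{L})$, identifying this class with the preceding four. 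The argument is essentially routine once Propositions \ref{dsum} and \ref{prop-facto} are invoked; the only points demanding care are the bookkeeping with arbitrary set-indexed families (allowing repetitions of objects of $\mathcal{L}$) and the tacit standing assumption that $\mathscr{A}$ admits direct sums so that $Sum(\mathcal{L})$ and $Add(\mathcal{L})$ are defined. The conceptual heart — the only step not reducible to monotonicity or to Proposition \ref{dsum} — is the summand-closure argument via the factorization criterion.
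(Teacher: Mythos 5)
Your proof is correct and follows the same overall strategy as the paper: use monotonicity of ${{\underline {\mathfrak{Pr}}}_{\mathscr{A}}}^{-1}(-)$ to squeeze the four classes down to the single inclusion ${{\underline {\mathfrak{Pr}}}_{\mathscr{A}}}^{-1}(\mathcal{L})\subseteq {{\underline {\mathfrak{Pr}}}_{\mathscr{A}}}^{-1}(Add(\mathcal{L}))$, and settle that one via Proposition \ref{dsum}. The one place you diverge is the summand-closure step, which you call the conceptual heart and prove separately via the factorization criterion of Proposition \ref{prop-facto}; your argument there ($f=f\pi\iota=\beta(\alpha\iota)$) is correct, but the paper gets this step for free from Proposition \ref{dsum} itself: writing $M\oplus M'=\oplus_i L_i$ with $L_i\in\mathcal{L}$, one application of \ref{dsum} gives $N\in{{\underline {\mathfrak{Pr}}}_{\mathscr{A}}}^{-1}(\oplus_i L_i)={{\underline {\mathfrak{Pr}}}_{\mathscr{A}}}^{-1}(M\oplus M')$, and since \ref{dsum} identifies the latter with ${{\underline {\mathfrak{Pr}}}_{\mathscr{A}}}^{-1}(\{M,M'\})$, membership in ${{\underline {\mathfrak{Pr}}}_{\mathscr{A}}}^{-1}(M)$ follows with no further work. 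So the step you single out as not reducible to Proposition \ref{dsum} in fact is; your route buys a self-contained summand argument independent of direct sums, while the paper's is shorter and keeps the whole proof resting on a single lemma. Your remarks on the tacit assumption that $\mathscr{A}$ has direct sums and on the handling of the set case are accurate and match the paper.
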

\begin{proof}
It is clear that ${{\underline {\mathfrak{Pr}}}_{\mathscr{A}}}^{-1}(Add(\mathcal{L}))$ holds inside ${{\underline {\mathfrak{Pr}}}_{\mathscr{A}}}^{-1}(\mathcal{L})$ since $\mathcal{L}$ holds inside $Add(\mathcal{L})$. 

Conversely, let $N$ be in ${{\underline {\mathfrak{Pr}}}_{\mathscr{A}}}^{-1}(\mathcal{L})$ and $M$ in $Add (\mathcal{L})$. Then, there exist $M'$ in $Add (\mathcal{L})$ and a family $\{L_i\}$ in $\mathcal{L}$ such that $M \oplus M' = \oplus_i L_i$.  By Proposition \ref{dsum}, $N \in {{\underline {\mathfrak{Pr}}}_{\mathscr{A}}}^{-1}(M)$ so  $N \in {{\underline {\mathfrak{Pr}}}_{\mathscr{A}}}^{-1}(Add(\mathcal{L}))$. Therefore, ${{\underline {\mathfrak{Pr}}}_{\mathscr{A}}}^{-1}(Add(\mathcal{L}))= {{\underline {\mathfrak{Pr}}}_{\mathscr{A}}}^{-1}(\mathcal{L})$. 

Now, it is clear that $\mathcal{L} \subseteq  Summ (\mathcal{L}) \subseteq  Add(\mathcal{L})$ and that $\mathcal{L} \subseteq  Sum (\mathcal{L}) \subseteq  Add(\mathcal{L})$, so we have ${{\underline {\mathfrak{Pr}}}_{\mathscr{A}}}^{-1}(Add(\mathcal{L})) \subseteq   {{\underline {\mathfrak{Pr}}}_{\mathscr{A}}}^{-1}(Summ(\mathcal{L})) \subseteq  {{\underline {\mathfrak{Pr}}}_{\mathscr{A}}}^{-1}(\mathcal{L})$ and ${{\underline {\mathfrak{Pr}}}_{\mathscr{A}}}^{-1}(Add(\mathcal{L})) \subseteq   {{\underline {\mathfrak{Pr}}}_{\mathscr{A}}}^{-1}(Sum(\mathcal{L})) \subseteq  {{\underline {\mathfrak{Pr}}}_{\mathscr{A}}}^{-1}(\mathcal{L})$, so we conclude that ${{\underline {\mathfrak{Pr}}}_{\mathscr{A}}}^{-1}(Add(\mathcal{L}))= {{\underline {\mathfrak{Pr}}}_{\mathscr{A}}}^{-1}(Sum(\mathcal{L}))={{\underline {\mathfrak{Pr}}}_{\mathscr{A}}}^{-1}(Summ(\mathcal{L}))={{\underline {\mathfrak{Pr}}}_{\mathscr{A}}}^{-1}(\mathcal{L})$.

If $\mathcal{L}$ is a set then, by Proposition \ref{dsum}, ${{\underline {\mathfrak{Pr}}}_{\mathscr{A}}}^{-1}(\mathcal{L})= {{\underline {\mathfrak{Pr}}}_{\mathscr{A}}}^{-1}(\oplus_{L\in \mathcal{L}}L)$.
\end{proof}

As mentioned before, Proposition \ref{prop-whole-cat} can be used to unify various known results. These will follow from establishing the subprojectivity domains of the following five well known classes of modules.

The case of the class of finitely presented objects can be deduced directly from the categorical definition of flat objects.  Indeed, an object $F$ is said to be flat if every short exact sequence $0 \to A \to B \to F \to 0$ is pure, that is, if for every finitely presented object $P$, $\Hom_{\A}(P, -)$ makes this sequence exact (see \cite{Sten}). Then, we get the following result (see also \cite[Proposition 2.1]{Dur1}).  

\begin{prop} [\cite{Dur1}, Proposition 2.1] \label{prop-sub-fp-flat}
	The subprojectivity domain of the class of finitely presented objects is the class of flat objects.
\end{prop}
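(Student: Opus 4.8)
The plan is to unwind both notions into the same lifting condition on $\Hom$-functors applied to short exact sequences ending in $N$, and to observe that they coincide verbatim. Write $\mathcal{FP}$ for the class of finitely presented objects. First I would record what $N$-subprojectivity of a fixed finitely presented object $P$ says at the level of $\Hom$: by definition $P$ is $N$-subprojective precisely when, for every epimorphism $g\colon K\to N$, every $f\colon P\to N$ lifts along $g$, that is, precisely when $\Hom(P,g)\colon \Hom(P,K)\to\Hom(P,N)$ is an epimorphism of abelian groups. Consequently $N\in {{\underline {\mathfrak{Pr}}}_{\mathscr{A}}}^{-1}(\mathcal{FP})$ if and only if $\Hom(P,g)$ is epic for \emph{every} finitely presented $P$ and \emph{every} epimorphism $g\colon K\to N$.

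Next I would translate this into the language of short exact sequences. Since $\mathscr{A}$ is abelian, the epimorphisms onto $N$ are exactly the right-hand maps of short exact sequences $0\to A\to B\xrightarrow{g} N\to 0$ (take $A=\Ker g$), and conversely every such sequence furnishes an epimorphism $B\to N$. Hence the condition ``$\Hom(P,g)$ is epic for every epimorphism $g\colon K\to N$ and every finitely presented $P$'' is exactly the condition ``for every short exact sequence $0\to A\to B\to N\to 0$ and every finitely presented $P$, the map $\Hom(P,B)\to\Hom(P,N)$ is epic''.

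Finally I would compare this with the definition of flatness recalled before the statement. Since $\Hom(P,-)$ is left exact, for any short exact sequence $0\to A\to B\to N\to 0$ the induced sequence $0\to\Hom(P,A)\to\Hom(P,B)\to\Hom(P,N)$ is automatically exact; thus purity of $0\to A\to B\to N\to 0$ (i.e. exactness of $0\to\Hom(P,A)\to\Hom(P,B)\to\Hom(P,N)\to 0$ for every finitely presented $P$) reduces to the surjectivity of $\Hom(P,B)\to\Hom(P,N)$ for every finitely presented $P$. Therefore $N\in {{\underline {\mathfrak{Pr}}}_{\mathscr{A}}}^{-1}(\mathcal{FP})$ if and only if every short exact sequence ending in $N$ is pure, that is, if and only if $N$ is flat, giving the claimed equality of classes. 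There is really no hard step: the entire content is the abelian-category fact that epimorphisms onto $N$ and short exact sequences with cokernel $N$ carry the same data, so the two a priori different quantifications — over ``epimorphisms onto $N$'' and over ``short exact sequences ending in $N$'' — agree. The only point that needs a word of care is that purity is a condition about a \emph{full} short exact $\Hom$-sequence, which by left exactness of $\Hom$ collapses to the single surjectivity statement matching subprojectivity.
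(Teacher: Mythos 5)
Your proposal is correct and is essentially the same argument the paper intends: the authors explicitly say the result ``can be deduced directly from the categorical definition of flat objects'' and give no further proof, and your write-up is precisely that definitional unwinding (subprojectivity with respect to all epimorphisms onto $N$ $=$ surjectivity of $\Hom(P,g)$ $=$ purity of every short exact sequence ending in $N$, using left exactness of $\Hom(P,-)$ to collapse purity to the single surjectivity condition). No gaps.
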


Now, applying Proposition \ref{prop-whole-cat} to the class of finitely presented objects we get the following well known result.

\begin{cor}\label{cor-A-flat}
The following conditions are equivalent:
\begin{enumerate}
	\item Every object of $\mathscr{A}$ is flat.
	\item Every finitely presented object is projective.
	\item Every finitely presented object is flat.
\end{enumerate}
\end{cor}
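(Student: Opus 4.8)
The plan is to apply Proposition \ref{prop-whole-cat} to the specific class $\mathcal{L}$ of all finitely presented objects of $\mathscr{A}$, using Proposition \ref{prop-sub-fp-flat} to identify its subprojectivity domain explicitly. First I would recall that, by Proposition \ref{prop-sub-fp-flat}, the subprojectivity domain ${\underline {\mathfrak{Pr}}}_{\mathscr{A}}^{-1}(\mathcal{L})$ is precisely the class of flat objects. With this identification in hand, the three clauses of Proposition \ref{prop-whole-cat} translate directly into the three clauses we wish to prove: the statement that ${\underline {\mathfrak{Pr}}}_{\mathscr{A}}^{-1}(\mathcal{L})$ is the whole category becomes condition (1), ``every object of $\mathscr{A}$ is flat''; the statement that every object of $\mathcal{L}$ is projective becomes condition (2), ``every finitely presented object is projective''; and the inclusion $\mathcal{L}\subseteq{\underline {\mathfrak{Pr}}}_{\mathscr{A}}^{-1}(\mathcal{L})$ becomes condition (3), ``every finitely presented object is flat''.

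The key steps, in order, are therefore: (i) fix $\mathcal{L}$ to be the class of finitely presented objects; (ii) invoke Proposition \ref{prop-sub-fp-flat} to rewrite ${\underline {\mathfrak{Pr}}}_{\mathscr{A}}^{-1}(\mathcal{L})$ as the class of flat objects; (iii) match each of the three equivalent conditions of Proposition \ref{prop-whole-cat} against the three conditions in the corollary. Concretely, condition (1) of the proposition says ${\underline {\mathfrak{Pr}}}_{\mathscr{A}}^{-1}(\mathcal{L})=\mathscr{A}$, which by (ii) says every object is flat, giving condition (1) of the corollary. Condition (2) of the proposition is that every object of $\mathcal{L}$ is projective, i.e.\ every finitely presented object is projective, which is condition (2) of the corollary verbatim. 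Condition (3) of the proposition is $\mathcal{L}\subseteq{\underline {\mathfrak{Pr}}}_{\mathscr{A}}^{-1}(\mathcal{L})$, which by (ii) says every finitely presented object is flat, matching condition (3) of the corollary. Since Proposition \ref{prop-whole-cat} asserts the equivalence of its three conditions, the equivalence of the three conditions of the corollary follows immediately.

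I do not anticipate a genuine obstacle here, since the corollary is essentially a translation exercise. The only point requiring a moment of care is the bookkeeping in step (ii): one must be sure that the identification of the subprojectivity domain with the class of flat objects is exactly the object replacing ``whole category'' and ``inclusion'' in each clause, and that no additional hypothesis on $\mathscr{A}$ (beyond having enough projectives, which is the standing assumption) is silently needed. The mildest subtlety is that conditions (2) and (3) of the corollary are not literally symmetric — ``projective'' versus ``flat'' — but this asymmetry is precisely what Proposition \ref{prop-whole-cat} is built to resolve, as its conditions (2) and (3) encode exactly this distinction between the class being projective and the class lying in its own subprojectivity domain. So the proof reduces to a short paragraph citing the two earlier results and reading off the correspondence.
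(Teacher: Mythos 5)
Your proposal is correct and is exactly the paper's argument: the paper proves this corollary by applying Proposition \ref{prop-whole-cat} to the class of finitely presented objects, with Proposition \ref{prop-sub-fp-flat} identifying the subprojectivity domain of that class as the flat objects. Nothing more is needed.
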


Recall now that an object is said to be pure-projective if it is projective with respect to every pure short exact sequence. If the category is locally finitely presented then using the same arguments in (\cite[Corollary 3]{Warfield}) one can show that an object is pure-projective if and only if it is a direct summand of a direct sum of finitely presented objects. As a direct consequence of Proposition \ref{prop-sub-dom-Add}, we get the following result.

\begin{cor} \label{cor-pure-proj-flat}
If the category is locally finitely presented then the subprojectivity domain of the class of all pure-projective objects is precisely the class of all flat objects.
\end{cor}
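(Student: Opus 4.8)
The plan is to combine the Warfield-type characterization of pure-projective objects recalled immediately above the statement with the two preceding propositions, so that the result becomes a short formal deduction. Write $\mathcal{L}$ for the class of finitely presented objects of $\mathscr{A}$. Under the local finite presentation hypothesis, the argument of \cite[Corollary 3]{Warfield} shows that an object is pure-projective precisely when it is a direct summand of a direct sum of finitely presented objects; in other words, the class of all pure-projective objects is exactly $Add(\mathcal{L})$.

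With this identification in hand, I would simply apply Proposition \ref{prop-sub-dom-Add} to the class $\mathcal{L}$, which gives
$${{\underline {\mathfrak{Pr}}}_{\mathscr{A}}}^{-1}(Add(\mathcal{L}))={{\underline {\mathfrak{Pr}}}_{\mathscr{A}}}^{-1}(\mathcal{L}).$$
Hence the subprojectivity domain of the class of all pure-projective objects coincides with the subprojectivity domain of the class of finitely presented objects. Finally, Proposition \ref{prop-sub-fp-flat} identifies the latter with the class of flat objects, and stringing these equalities together yields the assertion.

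Since both ingredients are already established, I do not expect any substantial obstacle in this argument; it is genuinely a direct consequence. The one place where care is needed, and where the standing hypothesis is actually used, is the step equating the class of pure-projectives with $Add(\mathcal{L})$: this is exactly what the local finite presentation assumption guarantees, via the Warfield-type equivalence, and it is what makes Proposition \ref{prop-sub-dom-Add} applicable here. I would therefore be explicit in invoking the hypothesis precisely at that point, rather than in the purely formal manipulations that follow.
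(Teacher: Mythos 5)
Your argument is correct and is exactly the one the paper has in mind: identify the pure-projectives with $Add(\mathcal{L})$ for $\mathcal{L}$ the finitely presented objects via the Warfield-type characterization (which is precisely where the local finite presentation hypothesis enters), then apply Proposition \ref{prop-sub-dom-Add} and Proposition \ref{prop-sub-fp-flat}. No differences worth noting.
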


Then, by Proposition \ref{prop-whole-cat} applied to the class of pure-projective objects we get the following known result (see \cite{Fieldhouse}). 

\begin{cor}\label{cor-pp-A}
If the category $\A$ is locally finitely presented, then the following conditions are equivalent:
\begin{enumerate}
\item Every object of $\A$ is flat.
\item Every pure-projective object is projective.
\item Every pure-projective object is flat.
\end{enumerate}
\end{cor}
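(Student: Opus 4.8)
The plan is to deduce Corollary \ref{cor-pp-A} directly from Proposition \ref{prop-whole-cat} by applying it to the class $\mathcal{L}$ of all pure-projective objects, exactly as Corollary \ref{cor-A-flat} was deduced from it for finitely presented objects. The key input is Corollary \ref{cor-pure-proj-flat}, which identifies ${\underline {\mathfrak{Pr}}}_{\mathscr{A}}^{-1}(\mathcal{L})$ with the class of flat objects whenever $\A$ is locally finitely presented. Since Proposition \ref{prop-whole-cat} states that ${\underline {\mathfrak{Pr}}}_{\mathscr{A}}^{-1}(\mathcal{L})$ being all of $\A$ is equivalent to every object of $\mathcal{L}$ being projective, and also equivalent to $\mathcal{L}\subseteq {\underline {\mathfrak{Pr}}}_{\mathscr{A}}^{-1}(\mathcal{L})$, the three conditions of the corollary should fall out after translating each one through the identification of the subprojectivity domain with the flat objects.

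First I would set $\mathcal{L}=Proj\text{-pure}$, the class of all pure-projective objects, and invoke Corollary \ref{cor-pure-proj-flat} to record that ${\underline {\mathfrak{Pr}}}_{\mathscr{A}}^{-1}(\mathcal{L})$ equals the class of flat objects. With this in hand, condition $(1)$ of Proposition \ref{prop-whole-cat}, namely that ${\underline {\mathfrak{Pr}}}_{\mathscr{A}}^{-1}(\mathcal{L})=\A$, translates verbatim into ``every object of $\A$ is flat'', which is condition $(1)$ here. Condition $(2)$ of the proposition, ``every object of $\mathcal{L}$ is projective'', is literally ``every pure-projective object is projective'', which is condition $(2)$ here. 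For condition $(3)$ I would read off $\mathcal{L}\subseteq {\underline {\mathfrak{Pr}}}_{\mathscr{A}}^{-1}(\mathcal{L})$ using the identification of the domain with flat objects; this says precisely that every pure-projective object is flat, which is condition $(3)$ here. Thus the three equivalences of Proposition \ref{prop-whole-cat}, once pushed through Corollary \ref{cor-pure-proj-flat}, become exactly the three stated conditions.

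The one point requiring a little care, rather than a genuine obstacle, is that Proposition \ref{prop-whole-cat} is stated for an arbitrary class in an abelian category with enough projectives, whereas Corollary \ref{cor-pure-proj-flat} needs the category to be locally finitely presented in order to describe pure-projectives as direct summands of direct sums of finitely presented objects and to identify their subprojectivity domain with the flat objects. Since the hypothesis of the present corollary already assumes $\A$ is locally finitely presented, both prerequisites are met simultaneously, so no extra work is needed; I would simply note at the outset that the local finite presentability is what licenses the appeal to Corollary \ref{cor-pure-proj-flat}. The proof is therefore a short bookkeeping argument: apply Proposition \ref{prop-whole-cat} to $\mathcal{L}$ and substitute the description of ${\underline {\mathfrak{Pr}}}_{\mathscr{A}}^{-1}(\mathcal{L})$ supplied by Corollary \ref{cor-pure-proj-flat}, reading the resulting three equivalent statements off line by line.
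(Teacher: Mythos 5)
Your proposal is correct and matches the paper's own argument exactly: the paper also obtains this corollary by applying Proposition \ref{prop-whole-cat} to the class of pure-projective objects and using Corollary \ref{cor-pure-proj-flat} to identify its subprojectivity domain with the flat objects. No gaps; the translation of the three conditions is precisely as you describe.
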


Proposition \ref{prop-facto} relates belonging to some subprojectivity domain with factorization of morphisms through projective objects, and this suggests studying subprojectivity domains of classes defined by means of factorizations. 

Recall that a module $M$ is said to be f-projective if for every finitely generated submodule $C$ of $M$, the inclusion map $C\to M$ factors through a finitely generated free module. Then, we have the following result.

\begin{prop} \label{prop-fg-fproj}
The subprojectivity domain of the class of finitely generated modules is the class of f-projective modules.
\end{prop}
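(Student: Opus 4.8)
The plan is to translate the statement into the language of factorizations using Proposition \ref{prop-facto}, which tells us that for a module $M$ and an object $N$, $M$ is $N$-subprojective precisely when every morphism $M\to N$ factors through a projective module. Consequently, $N$ belongs to the subprojectivity domain of the class of finitely generated modules if and only if every morphism from a finitely generated module to $N$ factors through a projective module. I would then prove the two containments separately against this reformulation.

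For the direction showing that an f-projective module $N$ lies in the subprojectivity domain, I would take an arbitrary finitely generated module $M$ together with a morphism $f:M\to N$. Since $M$ is finitely generated, its image $C=\Im f$ is a finitely generated submodule of $N$, and $f$ factors as $M\twoheadrightarrow C\hookrightarrow N$. By the definition of f-projectivity, the inclusion $C\hookrightarrow N$ factors through a finitely generated free module $F$, say $C\xrightarrow{u}F\xrightarrow{v}N$. Composing gives a factorization of $f$ through $F$, which is projective, so by Proposition \ref{prop-facto} each finitely generated $M$ is $N$-subprojective, as required.

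For the converse, suppose every morphism from a finitely generated module to $N$ factors through a projective module, and let $C$ be a finitely generated submodule of $N$. Applying the hypothesis to the inclusion $C\hookrightarrow N$ yields a factorization $C\xrightarrow{\alpha}P\xrightarrow{\beta}N$ with $P$ projective. The key point — the step I expect to require the most care — is to refine $P$ to a finitely generated \emph{free} module. Writing $P$ as a direct summand of a free module $F'$ (with inclusion $\iota$ and retraction $\pi$, so $\pi\iota=1_P$), the images under $\alpha$ of a finite generating set of $C$ involve only finitely many basis coordinates of $F'$, hence lie in a finitely generated free direct summand $F$ of $F'$. Denoting by $j:F\to F'$ the inclusion and $\rho:F'\to F$ the associated projection, one has $j\rho x=x$ for every $x$ in the image of $\iota\alpha$, so $\iota\alpha=j(\rho\iota\alpha)$. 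Setting $\gamma=\rho\iota\alpha:C\to F$ and $\delta=\beta\pi j:F\to N$, a direct computation gives $\delta\gamma=\beta\pi\iota\alpha=\beta\alpha$, the original inclusion. Thus $C\hookrightarrow N$ factors through the finitely generated free module $F$, and $N$ is f-projective.

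The only genuine obstacle is the last refinement: the reformulation via Proposition \ref{prop-facto} only delivers factorizations through arbitrary projective modules, whereas f-projectivity demands finitely generated free ones. The finiteness of $C$ is exactly what makes this upgrade possible, since only finitely many coordinates of the ambient free module are needed to house the images of a generating set; the rest of the argument is then bookkeeping with the summand projections.
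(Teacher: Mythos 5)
Your proof is correct and follows essentially the same route as the paper: both directions reduce to Proposition \ref{prop-facto}, and the converse hinges on the same finiteness observation, namely that the image of a finite generating set inside a free module lands in a finitely generated free direct summand. The only cosmetic difference is that the paper gets its free module by lifting the inclusion $N'\hookrightarrow N$ along a free epimorphism $F\to N$ directly, whereas you first extract an arbitrary projective factorization and then embed that projective into a free module, which adds a harmless extra layer of summand bookkeeping.
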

\begin{proof}
Let $N$ be an $f$-projective module and $M$ be a finitely generated module. The image of any morphism $f:M\to N$, $\Im(f)$, is a finitely generated submodule of $N$, so being $N$ f-projective means that the inclusion map $\Im(f)\to N$ factors through a projective module and so that $M$ is $N$-subprojective (Proposition \ref{prop-facto}). 

Conversely, let $N$ be in the subprojectivity domain of the class of finitely generated modules, $N'$ be a finitely generated submodule of $N$ and $i: N'\to N$ the inclusion map. Let $g:F\to N$ be an epimorphism  with $F$ free. Since $N \in {\underline {\mathfrak{Pr}}}_{R-\Mod}^{-1}({N'})$, there exists a morphism $h:N'\to F$ such that $i=gh$. Since $\Im( h)$ is finitely generated, there exists a finitely generated free module $F'$ such that $\Im (h)\subseteq  F' \subseteq  F$. Hence $i$ factors through $F'$ and $N$ is f-projective.
\end{proof}

Now, applying Proposition \ref{prop-whole-cat} to the class of finitely generated modules we get the following.

\begin{cor} \label{cor-A-fproj}
The following conditions are equivalent:
\begin{enumerate}
\item Every module is f-projective.
\item Every finitely generated module is projective, that is $R$ is a semisimple
artinian ring.  
\item Every finitely generated module is f-projective.
\end{enumerate}
\end{cor}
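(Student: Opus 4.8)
Corollary \ref{cor-A-fproj} states the equivalence of three conditions about f-projective modules, and the plan is to apply Proposition \ref{prop-whole-cat} to the class $\mathcal{L}$ of finitely generated modules, since Proposition \ref{prop-fg-fproj} has just identified the subprojectivity domain of $\mathcal{L}$ as the class of f-projective modules.

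First I would unwind the three conditions of Proposition \ref{prop-whole-cat} for this specific $\mathcal{L}$. Condition (1) of that proposition says ${\underline {\mathfrak{Pr}}}_{R\text{-}\Mod}^{-1}(\mathcal{L})$ is the whole category, which by Proposition \ref{prop-fg-fproj} means every module is f-projective; this is exactly condition (1) of the corollary. Condition (2) of the proposition says every object of $\mathcal{L}$ is projective, i.e.\ every finitely generated module is projective, which is condition (2) of the corollary. Finally, condition (3) of the proposition reads $\mathcal{L}\subseteq {\underline {\mathfrak{Pr}}}_{R\text{-}\Mod}^{-1}(\mathcal{L})$, and again invoking Proposition \ref{prop-fg-fproj} this says every finitely generated module is f-projective, which is condition (3) of the corollary. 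So the three-way equivalence follows immediately once the dictionary between the two sets of conditions is made explicit.

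The only genuine content to add is the parenthetical remark in condition (2) that ``every finitely generated module is projective'' is the same as ``$R$ is a semisimple artinian ring.'' For this I would recall the standard characterization: $R$ is semisimple artinian if and only if every (left) $R$-module is projective, which is equivalent to every finitely generated module being projective (since any module is a direct limit, hence a quotient of a sum of its finitely generated submodules, and projectivity of all cyclic or finitely generated modules already forces semisimplicity via the splitting of every short exact sequence of finitely generated modules). This is a classical fact and I would simply cite it rather than reprove it.

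I do not expect any real obstacle here: the whole corollary is a direct specialization of Proposition \ref{prop-whole-cat} through the identification provided by Proposition \ref{prop-fg-fproj}, and the semisimplicity remark is a textbook equivalence. The ``hard part,'' such as it is, was already done in establishing Proposition \ref{prop-fg-fproj}; the present statement is purely a matter of translating the abstract equivalence into the concrete language of f-projective modules and recording the well-known ring-theoretic reformulation of condition (2).
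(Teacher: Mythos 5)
Your proposal is correct and matches the paper's own (implicit) proof exactly: the corollary is stated as an immediate application of Proposition \ref{prop-whole-cat} to the class of finitely generated modules, with Proposition \ref{prop-fg-fproj} supplying the identification of its subprojectivity domain as the f-projective modules, and the semisimple-artinian remark being the standard textbook equivalence. Nothing is missing.
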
 

In a similar way to Proposition \ref{prop-fg-fproj}, we can determine the subprojectivity domain of simple modules. Recall that a module $N$ is called simple-projective if, for any simple module $M$, every morphism $f:M\to N$ factors through a finitely generated free module (see \cite[Definition 2.1]{Mao}).

\begin{prop} \label{prop-simple-proj}
The subprojectivity domain of the class of simple modules is the class of simple-projective modules. 
\end{prop}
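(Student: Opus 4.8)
The plan is to mirror the structure of the proof of Proposition~\ref{prop-fg-fproj}, since the statement for simple modules is the exact analogue of the one for finitely generated modules, with ``finitely generated submodule'' replaced by ``simple module'' and the factorization condition adapted to the definition of simple-projective modules. The main tool will again be the factorization characterization of subprojectivity given in Proposition~\ref{prop-facto}, together with the definition of simple-projective modules as those $N$ for which every morphism from a simple module factors through a finitely generated free module.

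First I would prove that every simple-projective module $N$ lies in ${\underline {\mathfrak{Pr}}}_{R-\Mod}^{-1}(M)$ for every simple module $M$. Given such an $M$ and any morphism $f:M\to N$, the definition of simple-projectivity says precisely that $f$ factors through a finitely generated free module, hence through a projective module; by Proposition~\ref{prop-facto} this means $M$ is $N$-subprojective. Since this holds for every simple $M$, we get $N\in{\underline {\mathfrak{Pr}}}_{R-\Mod}^{-1}(\{\text{simple modules}\})$. This direction is essentially immediate from the two definitions.

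For the converse I would take $N$ in the subprojectivity domain of the class of simple modules, fix a simple module $M$ and a morphism $f:M\to N$, and show $f$ factors through a finitely generated free module. Choosing an epimorphism $g:F\to N$ with $F$ free, the fact that $N\in{\underline {\mathfrak{Pr}}}_{R-\Mod}^{-1}(M)$ yields a morphism $h:M\to F$ with $f=gh$. Then $\Im(h)$ is a homomorphic image of the simple module $M$, hence is either zero or finitely generated (being cyclic), so there is a finitely generated free submodule $F'\subseteq F$ containing $\Im(h)$. Thus $f=gh$ factors as $M\to F'\to N$ through the finitely generated free module $F'$, proving $N$ is simple-projective.

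The only point demanding a little care, and the step I expect to be the mild obstacle, is the passage from $\Im(h)$ being finitely generated to the existence of a finitely generated free submodule $F'$ of $F$ with $\Im(h)\subseteq F'$: one must observe that $\Im(h)$ is contained in the submodule generated by finitely many basis elements of $F$, which is itself finitely generated and free. This is exactly the argument used at the end of the proof of Proposition~\ref{prop-fg-fproj}, and here it is even simpler because the source $M$ is simple, so $\Im(h)$ is cyclic.
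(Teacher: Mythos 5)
Your proof is correct and follows exactly the route the paper intends: the paper gives no explicit proof for this proposition, stating only that it goes ``in a similar way to Proposition~\ref{prop-fg-fproj}'', and your argument is precisely that adaptation, using Proposition~\ref{prop-facto} for one direction and lifting along a free cover plus the cyclicity of $\Im(h)$ for the other.
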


And again, using Proposition \ref{prop-sub-dom-Add}, we have the subprojectivity domain of the class of all semisimple modules.

\begin{cor} \label{prop-semisimple-proj}
The subprojectivity domain of the class of semisimple modules is also the class of simple-projective modules.
\end{cor}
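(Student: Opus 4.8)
The plan is to obtain this corollary immediately by combining the two preceding results, Proposition \ref{prop-sub-dom-Add} and Proposition \ref{prop-simple-proj}, once we recognize that the class of semisimple modules is nothing other than $Sum(\mathcal{S})$, where $\mathcal{S}$ denotes the class of simple modules. Indeed, by the standard structure theory of semisimple modules, a module is semisimple precisely when it is isomorphic to a direct sum of simple modules; since $Sum(\mathcal{S})$ is by definition the class of all objects isomorphic to direct sums of objects of $\mathcal{S}$, the class of semisimple modules coincides exactly with $Sum(\mathcal{S})$.

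With this identification in hand, I would first invoke Proposition \ref{prop-sub-dom-Add} applied to the class $\mathcal{L}=\mathcal{S}$, which yields
$${\underline {\mathfrak{Pr}}}_{R-\Mod}^{-1}(Sum(\mathcal{S}))={\underline {\mathfrak{Pr}}}_{R-\Mod}^{-1}(\mathcal{S}).$$
Then I would apply Proposition \ref{prop-simple-proj}, which identifies ${\underline {\mathfrak{Pr}}}_{R-\Mod}^{-1}(\mathcal{S})$ with the class of simple-projective modules. Chaining these two equalities gives that the subprojectivity domain of the class of semisimple modules equals the class of simple-projective modules, as asserted.

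There is no genuine obstacle in this argument, since its entire content is already packaged in the two cited propositions. The only point deserving a moment's attention is the equality between the class of semisimple modules and $Sum(\mathcal{S})$, which is purely a matter of the definition of semisimplicity together with the definition of the operator $Sum$. Note in particular that Proposition \ref{prop-sub-dom-Add} applies to an arbitrary class $\mathcal{L}$, so no extra hypotheses on the ring $R$ are required; the passage from simple modules to all their direct sums is absorbed at no cost, which is precisely what makes this corollary a direct consequence of Proposition \ref{prop-simple-proj}.
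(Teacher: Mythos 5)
Your proof is correct and follows exactly the paper's intended route: the paper derives this corollary from Proposition \ref{prop-sub-dom-Add} combined with Proposition \ref{prop-simple-proj}, using precisely the identification of the semisimple modules with $Sum(\mathcal{S})$ for $\mathcal{S}$ the class of simple modules. No issues.
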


Then we get the following equivalences.

\begin{cor} \label{cor-A-simpleproj}
The following conditions are equivalent:
\begin{enumerate}
\item Every  module is simple-projective.
\item Every simple module is projective.
\item Every simple module is simple-projective.
\item Every semisimple module is projective.
\item Every semisimple module is simple-projective.
\end{enumerate}
\end{cor}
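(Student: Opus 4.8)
The plan is to derive the entire chain of equivalences from a single result, Proposition \ref{prop-whole-cat}, by applying it twice: once to the class of simple modules and once to the class of semisimple modules. The two applications are linked through condition $(1)$, which is common to both and will serve as the pivot.

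First I would apply Proposition \ref{prop-whole-cat} to the class $\mathcal{L}$ of all simple modules. By Proposition \ref{prop-simple-proj}, ${\underline {\mathfrak{Pr}}}_{R-\Mod}^{-1}(\mathcal{L})$ is exactly the class of simple-projective modules. Translating the three equivalent conditions of Proposition \ref{prop-whole-cat} through this identification: its condition $(1)$ (the subprojectivity domain is the whole category) says precisely that every module is simple-projective, i.e. our $(1)$; its condition $(2)$ says every simple module is projective, i.e. our $(2)$; and its condition $(3)$, namely $\mathcal{L} \subseteq {\underline {\mathfrak{Pr}}}_{R-\Mod}^{-1}(\mathcal{L})$, says that every simple module lies in the class of simple-projective modules, i.e. our $(3)$. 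This gives the equivalence $(1) \Leftrightarrow (2) \Leftrightarrow (3)$.

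Next I would repeat the argument with $\mathcal{L}$ taken to be the class of all semisimple modules, now invoking Corollary \ref{prop-semisimple-proj}, which asserts that the subprojectivity domain of the semisimple modules is again the class of simple-projective modules. The same translation of Proposition \ref{prop-whole-cat} yields: its condition $(1)$ is once more the statement that every module is simple-projective, i.e. our $(1)$; its condition $(2)$ becomes every semisimple module is projective, i.e. our $(4)$; and its condition $(3)$ becomes every semisimple module is simple-projective, i.e. our $(5)$. Hence $(1) \Leftrightarrow (4) \Leftrightarrow (5)$.

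Finally, since condition $(1)$ appears in both groups, concatenating the two equivalences shows that all five conditions are equivalent. There is no genuine obstacle beyond correctly matching the abstract conditions of Proposition \ref{prop-whole-cat} with the concrete statements $(1)$--$(5)$; the only point requiring care is the observation that the class of simple modules and the class of semisimple modules share the \emph{same} subprojectivity domain, namely the simple-projective modules, which is exactly what makes condition $(1)$ a common pivot linking the two applications.
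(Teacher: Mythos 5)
Your proof is correct and matches the paper's intended argument exactly: the corollary is stated immediately after Proposition \ref{prop-simple-proj} and Corollary \ref{prop-semisimple-proj} precisely so that Proposition \ref{prop-whole-cat} can be applied to the class of simple modules and to the class of semisimple modules, with condition (1) serving as the common link. Your translation of the three abstract conditions in each of the two applications is accurate, so nothing is missing.
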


Let us finish this section by giving three more examples of subprojectivity domains of homologically important classes of objects.

We start by noticing that Proposition \ref{prop-semi-proj-ob} helps us deduce that ${{\underline {\mathfrak{Pr}}}_{C(R)}}^{-1}(\mathcal{SP})$ is a subclass of the class of exact complexes, where $\mathcal{SP}$ denotes the class of semi-projective complexes. The next proposition shows that we have an equality. 

\begin{prop} \label{prop-semi-proj-cl}
	The subprojectivity domain of the class of semi-projective complexes is the class of all exact complexes.
\end{prop}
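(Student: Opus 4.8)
The claim is an equality of two classes of complexes, and one inclusion is already in hand: as noted just before the statement, Proposition \ref{prop-semi-proj-ob} shows that ${{\underline {\mathfrak{Pr}}}_{C(R)}}^{-1}(\mathcal{SP})$ is contained in the class of exact complexes. (Indeed, for $N$ in this domain one takes, via \cite[Corollary 3.2.3]{Lars}, a quasi-isomorphism $f:X\to N$ from a semi-projective $X$; since $X\in\mathcal{SP}$ we have $N\in{{\underline {\mathfrak{Pr}}}_{C(R)}}^{-1}(X)$, and Proposition \ref{prop-semi-proj-ob} forces $N$ exact.) So the plan is to establish only the reverse inclusion: every exact complex belongs to ${{\underline {\mathfrak{Pr}}}_{C(R)}}^{-1}(\mathcal{SP})$. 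Since the subprojectivity domain of a class is by definition the intersection of the domains of its members, it suffices to fix an arbitrary exact complex $N$ and an arbitrary semi-projective complex $X$ and prove that $X$ is $N$-subprojective.

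To do this I would pass through the factorization criterion: by Proposition \ref{prop-facto}, $X$ is $N$-subprojective if and only if every morphism $X\to N$ factors through a projective object of $C(R)$. First I would choose, using that $C(R)$ has enough projectives, an epimorphism $g:P\to N$ with $P$ a projective complex. The key observation is that every projective complex is contractible, hence exact; thus both $P$ and $N$ have vanishing homology, and the epimorphism $g:P\to N$ is therefore a \emph{surjective quasi-isomorphism} (it induces an isomorphism between the zero homology modules). This is exactly the hypothesis needed to invoke the defining lifting property of the semi-projective complex $X$: given any morphism $f:X\to N$, semi-projectivity of $X$ applied to the surjective quasi-isomorphism $g$ yields a morphism $h:X\to P$ with $f=gh$. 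Hence $f$ factors through the projective complex $P$.

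Applying Proposition \ref{prop-facto} then gives that $X$ is $N$-subprojective, and since $X\in\mathcal{SP}$ was arbitrary we conclude $N\in{{\underline {\mathfrak{Pr}}}_{C(R)}}^{-1}(\mathcal{SP})$; combined with the inclusion recalled above, this establishes the asserted equality. The crux of the argument, and the only place that requires real thought, is the recognition that an epimorphism from a projective (hence exact) complex onto an exact complex is automatically a quasi-isomorphism; this is what converts the abstract factorization requirement into the concrete lifting guaranteed by semi-projectivity. I would verify carefully that projective objects of $C(R)$ are exact (they are direct sums of disc complexes and thus contractible), since the whole reduction rests on that fact.
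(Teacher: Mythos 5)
Your proof is correct, and it reaches the conclusion by a slightly different route than the paper. Both arguments ultimately invoke the semi-projective lifting property against a surjective quasi-isomorphism with projective domain, but you set this up in the opposite order. The paper starts from a surjective quasi-isomorphism $g:P\to E$ with $P$ \emph{semi-projective} (via \cite[Corollary 3.2.3]{Lars}), then upgrades $P$ to a projective complex using the fact that an exact semi-projective complex is projective (\cite[Proposition 2.3.7]{Ramon}), and concludes via Proposition \ref{Lemma-subproj}. You instead start from an arbitrary epimorphism $g:P\to N$ with $P$ \emph{projective}, observe that since projective complexes are contractible (hence exact) and $N$ is exact, $g$ is automatically a surjective quasi-isomorphism (a map between complexes with vanishing homology is trivially a quasi-isomorphism), and then conclude via Proposition \ref{prop-facto}. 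Your version is arguably more elementary for this direction: it does not need the existence of semi-projective resolutions nor the characterization of exact semi-projective complexes as projective, only the standard description of projective objects of $C(R)$ as contractible complexes with projective components. The paper's version has the mild advantage of producing one single morphism $g$ that works simultaneously for all $M\in\mathcal{SP}$, which is the format Proposition \ref{Lemma-subproj} is designed for, but both are complete and correct.
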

\begin{proof}
	Let $E$ be an exact complex. By \cite[Corollary 3.2.3]{Lars}, there exists a surjective quasi-iso\-mor\-phism $g:P\to E$ where $P$ is semi-projective. Since $E$ is exact then $P$ is so, hence $P$ is projective. Thus, for every $M \in \mathcal{SP}$,  $\Hom(M,g)$ is epic. Therefore, $E \in {{\underline {\mathfrak{Pr}}}_{C(R)}}^{-1}(\mathcal{SP})$ by Proposition \ref{Lemma-subproj}.
\end{proof}

The case of the class of strongly Gorenstein projective objects can be deduced directly from Corollary \ref{cor-subproj-sgoren-ob}.

\begin{prop} \label{prop-subproj-sgoren-cl}
The subprojectivity domain of the class of strongly Gorenstein projective objects is the class  $\mathcal{SGP}^\perp$. 
\end{prop}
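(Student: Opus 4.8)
The plan is to reduce the statement about the whole class $\mathcal{SGP}$ to the already-established result about a single strongly Gorenstein projective object. Recall from Corollary \ref{cor-subproj-sgoren-ob} that for any individual strongly Gorenstein projective object $M$ we have ${{\underline {\mathfrak{Pr}}}_{\mathscr{A}}}^{-1}(M)=M^\perp$. The subprojectivity domain of a class is, by definition, the intersection of the subprojectivity domains of its members, so I would compute
$$
{{\underline {\mathfrak{Pr}}}_{\mathscr{A}}}^{-1}(\mathcal{SGP})=\bigcap_{M\in\mathcal{SGP}}{{\underline {\mathfrak{Pr}}}_{\mathscr{A}}}^{-1}(M)=\bigcap_{M\in\mathcal{SGP}}M^\perp.
$$
On the other hand, the right Ext-orthogonal of the class $\mathcal{SGP}$ is by definition
$$
\mathcal{SGP}^\perp=\{X\in\mathscr{A}\mid \Ext^1(M,X)=0\text{ for all }M\in\mathcal{SGP}\}=\bigcap_{M\in\mathcal{SGP}}M^\perp.
$$

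First I would make explicit that these two intersections are literally the same set: an object $N$ lies in ${{\underline {\mathfrak{Pr}}}_{\mathscr{A}}}^{-1}(\mathcal{SGP})$ precisely when $M$ is $N$-subprojective for every $M\in\mathcal{SGP}$, which by Corollary \ref{cor-subproj-sgoren-ob} happens precisely when $N\in M^\perp$ for every such $M$, i.e.\ exactly when $\Ext^1(M,N)=0$ for all $M\in\mathcal{SGP}$. The first equality above (subprojectivity domain of a class equals the intersection of the individual domains) is immediate from the definition of the subprojectivity domain of a class, and the second equality (right Ext-orthogonal of a class equals the intersection of the individual orthogonals) is immediate from the definition of $\mathcal{C}^\perp$ given in the introduction. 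Chaining them yields the claim.

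The proof therefore amounts to two routine definitional unwindings glued together by Corollary \ref{cor-subproj-sgoren-ob} applied memberwise. I do not anticipate a genuine obstacle here, since the real content has already been discharged in Corollary \ref{cor-subproj-sgoren-ob}; the only thing to be careful about is to invoke that corollary uniformly over \emph{every} $M\in\mathcal{SGP}$ rather than for a single object, and to keep the quantifier ``for all $M$'' attached correctly when passing between the subprojectivity formulation and the Ext-vanishing formulation. Thus I would write the argument in essentially one line: for any $N$,
$$
N\in{{\underline {\mathfrak{Pr}}}_{\mathscr{A}}}^{-1}(\mathcal{SGP})\iff \forall M\in\mathcal{SGP}:\ N\in{{\underline {\mathfrak{Pr}}}_{\mathscr{A}}}^{-1}(M)=M^\perp\iff N\in\mathcal{SGP}^\perp,
$$
which establishes ${{\underline {\mathfrak{Pr}}}_{\mathscr{A}}}^{-1}(\mathcal{SGP})=\mathcal{SGP}^\perp$.
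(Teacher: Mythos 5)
Your argument is correct and is exactly the one the paper intends: the text introduces Proposition \ref{prop-subproj-sgoren-cl} by saying it ``can be deduced directly from Corollary \ref{cor-subproj-sgoren-ob},'' which is precisely your memberwise application of that corollary combined with the two definitional unwindings of the class-level subprojectivity domain and of $\mathcal{SGP}^\perp$ as intersections. Nothing is missing.
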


Finally, using Proposition \ref{prop-sub-dom-Add} we determine the subprojectivity domain of the class of Gorenstein projective objects. If direct sums exist and they are exact then, using similar arguments to those of \cite{BM}, we can show that an object is Gorenstein projective if and only if it is a direct summand of a strongly Gorenstein projective one, so clearly $\mathcal{SGP}^\perp = \mathcal{GP}^\perp$. Thus, we have the following result.

\begin{cor} \label{prop-subproj-goren-cl}
	If direct sums exist and they are exact then, the subprojectivity domain of the class of Gorenstein projective objects is the class $\mathcal{GP}^\perp$.
In particular, if $R$ is a ring  with  finite Gorenstein global dimension \cite{BM2},  then the subprojectivity domain of the class of Gorenstein projective modules  is the class of all modules with finite projective dimension. 
\end{cor}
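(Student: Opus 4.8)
The plan for the first assertion is to reduce to the strongly Gorenstein projective case already settled in Proposition \ref{prop-subproj-sgoren-cl}. As recorded in the remarks preceding the statement, the hypothesis that direct sums exist and are exact yields the exact identification $\mathcal{GP}=Summ(\mathcal{SGP})$ together with the equality $\mathcal{SGP}^\perp=\mathcal{GP}^\perp$. I would then simply substitute: $\underline{\mathfrak{Pr}}_{\A}^{-1}(\mathcal{GP})=\underline{\mathfrak{Pr}}_{\A}^{-1}(Summ(\mathcal{SGP}))$, which by Proposition \ref{prop-sub-dom-Add} (applied with $\mathcal{L}=\mathcal{SGP}$) equals $\underline{\mathfrak{Pr}}_{\A}^{-1}(\mathcal{SGP})$, and this in turn equals $\mathcal{SGP}^\perp=\mathcal{GP}^\perp$ by Proposition \ref{prop-subproj-sgoren-cl}. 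This part is purely formal bookkeeping on top of the two quoted propositions.

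For the ``in particular'' clause I work in $R$-Mod and, thanks to the equality just obtained, I only need to identify $\mathcal{GP}^\perp$ with the class $\mathcal{W}$ of modules of finite projective dimension when $R$ has finite Gorenstein global dimension. First I would record the auxiliary fact that $\mathcal{GP}\cap\mathcal{GP}^\perp$ is exactly the class of projective modules: projectives clearly lie in both classes, and conversely if $X$ is Gorenstein projective I embed it, via its complete resolution, in a projective $Q^0$ with Gorenstein projective cokernel $X''$; the extension $0\to X\to Q^0\to X''\to 0$ is classified by an element of $\Ext^1(X'',X)$, which vanishes because $X''\in\mathcal{GP}$ and $X\in\mathcal{GP}^\perp$, so the sequence splits and exhibits $X$ as a direct summand of $Q^0$, hence projective. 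I would also note that, since $\mathcal{GP}$ is closed under syzygies, dimension shifting in the first variable promotes the degree-one vanishing defining $\mathcal{GP}^\perp$ to the vanishing of $\Ext^i(G,X)$ for all $i\geq 1$ and all $G\in\mathcal{GP}$.

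The inclusion $\mathcal{W}\subseteq\mathcal{GP}^\perp$ requires no hypothesis on $R$: if $\pd X<\infty$ and $G\in\mathcal{GP}$, then dimension shifting in the second variable along a finite projective resolution of $X$ reduces each $\Ext^i(G,X)$ with $i\geq 1$ to a group $\Ext^j(G,P)$ with $P$ projective, which is zero. For the reverse inclusion I would use finiteness of the Gorenstein global dimension to guarantee $\mathrm{Gpd}(X)<\infty$ for every $X\in\mathcal{GP}^\perp$, and then invoke the classical fact (see \cite{Enochs}) that a module of finite Gorenstein projective dimension fits in a short exact sequence $0\to K\to G\to X\to 0$ with $G\in\mathcal{GP}$ and $\pd K<\infty$. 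Applying $\Ext^1(G',-)$ to this sequence for an arbitrary $G'\in\mathcal{GP}$, and using $\Ext^1(G',K)=0$ (from $\pd K<\infty$, by the previous paragraph) together with $\Ext^1(G',X)=0$ (from $X\in\mathcal{GP}^\perp$), forces $\Ext^1(G',G)=0$; hence $G\in\mathcal{GP}\cap\mathcal{GP}^\perp$ is projective, and therefore $\pd X\leq \pd K+1<\infty$, i.e. $X\in\mathcal{W}$.

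I expect the genuine obstacle to lie in this last inclusion, and more precisely in the input that finite Gorenstein global dimension places every module in a special-precover sequence of the above shape with finite-projective-dimension kernel; once such a sequence is in hand, the $\Ext$-computation and the splitting lemma $\mathcal{GP}\cap\mathcal{GP}^\perp=Proj$ are routine. By contrast, the main equality and the easy inclusion are essentially immediate consequences of Propositions \ref{prop-sub-dom-Add} and \ref{prop-subproj-sgoren-cl} together with standard dimension shifting.
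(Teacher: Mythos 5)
Your proof of the first assertion is exactly the paper's: the authors also observe (in the paragraph preceding the statement) that exact direct sums give $\mathcal{GP}=Summ(\mathcal{SGP})$ and $\mathcal{SGP}^\perp=\mathcal{GP}^\perp$, and then combine Proposition \ref{prop-sub-dom-Add} with Proposition \ref{prop-subproj-sgoren-cl}; there is nothing to add there. Where you diverge is the ``in particular'' clause. The paper disposes of it in one sentence by citing the known identification of $\mathcal{GP}^\perp$ with the modules of finite projective dimension over a ring of finite Gorenstein global dimension, pointing to \cite[Lemma 2.17]{Lars2} and explicitly admitting the lack of a precise reference. You instead prove this identification from scratch, and your argument is correct: the splitting lemma $\mathcal{GP}\cap\mathcal{GP}^\perp=Proj(\mathscr{A})$ via the extension class in $\Ext^1(X'',X)$ is sound, the inclusion $\mathcal{W}\subseteq\mathcal{GP}^\perp$ by dimension shifting (using that $\Ext^{i}(G,P)=0$ for $G$ Gorenstein projective, $P$ projective, $i\geq 1$) is standard, and the reverse inclusion correctly exploits the approximation sequence $0\to K\to G\to X\to 0$ with $G\in\mathcal{GP}$ and $\pd K<\infty$ to force $G$ projective and hence $\pd X\leq\pd K+1$. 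The one genuinely external input in your version is the existence of that approximation sequence for modules of finite Gorenstein projective dimension (Holm's theorem, available in \cite{Enochs}), which is a fair thing to quote. The trade-off is clear: the paper's route is shorter but leans on a reference it cannot pin down precisely, while yours is longer but self-contained modulo a single well-documented classical fact; your version would arguably make the corollary more robust as written.
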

\begin{proof}
The second assertion follows from the known fact that over a ring $R$ with finite Gorenstein
global dimension,  the class $\mathcal{GP}^\perp$   coincide with the class of all modules with finite projective dimension.  We do not have a precise reference but one can see that it is a simple consequence of  \cite[Lemma 2.17]{Lars2}.
\end{proof}


\section{Closure properties of the subprojectivity domains} \label{sec3}
The aim of this section is to investigate the closure properties of subprojectivity domains. This study leads to some new characterizations of known notions.

We start with the following generalization of \cite[Proposition 3]{Alagoz}, \cite[Proposition 2.11]{Sergio} and \cite[Proposition 2.12]{Sergio}. Though it can be proved by using similar arguments to those of the results it generalizes, we give an alternative proof since we think it provides new and useful ideas.

\begin{prop} \label{extension}
The subprojectivity domain of any class in  $\mathscr{A}$  is closed  under extensions, finite direct sums and direct summands.
\end{prop}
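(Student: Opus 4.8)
The plan is to reduce everything to the subprojectivity domain of a single object and then treat the three closure properties separately, with extensions being the crux. First I would record that, directly from the definition, ${\underline {\mathfrak{Pr}}}_{\mathscr{A}}^{-1}(\mathcal{L}) = \bigcap_{M \in \mathcal{L}} {\underline {\mathfrak{Pr}}}_{\mathscr{A}}^{-1}(M)$. Since an arbitrary intersection of classes that are each closed under a given operation is again closed under that operation, it suffices to establish the three closures for ${\underline {\mathfrak{Pr}}}_{\mathscr{A}}^{-1}(M)$ with $M$ a single object. Throughout I would lean on the factorization criterion of Proposition \ref{prop-facto}: $N \in {\underline {\mathfrak{Pr}}}_{\mathscr{A}}^{-1}(M)$ if and only if every morphism $M \to N$ factors through a projective object.

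For direct summands, suppose $N \in {\underline {\mathfrak{Pr}}}_{\mathscr{A}}^{-1}(M)$ and $N'$ is a summand, with inclusion $\iota : N' \to N$ and retraction $\rho : N \to N'$ satisfying $\rho\iota = 1_{N'}$. Given $f : M \to N'$, the morphism $\iota f$ factors as $\beta\alpha$ through a projective $P$; then $f = \rho\iota f = (\rho\beta)\alpha$ factors through $P$ too, so $N' \in {\underline {\mathfrak{Pr}}}_{\mathscr{A}}^{-1}(M)$. For finite direct sums, given $N_1, N_2 \in {\underline {\mathfrak{Pr}}}_{\mathscr{A}}^{-1}(M)$ and $f : M \to N_1 \oplus N_2$, I would factor each component $\pi_i f : M \to N_i$ as $\beta_i\alpha_i$ through a projective $P_i$ and assemble these into a factorization of $f$ through the projective $P_1 \oplus P_2$; alternatively, as $N_1 \oplus N_2$ sits in a split exact sequence $0 \to N_1 \to N_1 \oplus N_2 \to N_2 \to 0$, this case follows formally from the extension case.

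The main work is closure under extensions, where I would argue with the lifting definition rather than with factorizations. Let $0 \to N_1 \stackrel{u}\to N \stackrel{v}\to N_2 \to 0$ be exact with $N_1, N_2 \in {\underline {\mathfrak{Pr}}}_{\mathscr{A}}^{-1}(M)$, and let $g : K \to N$ be an epimorphism together with $f : M \to N$; I must produce $h : M \to K$ with $gh = f$. Since $vg : K \to N_2$ is an epimorphism and $N_2 \in {\underline {\mathfrak{Pr}}}_{\mathscr{A}}^{-1}(M)$, the map $vf$ lifts to some $h_2 : M \to K$ with $vgh_2 = vf$. Then $v(gh_2 - f) = 0$, so $gh_2 - f$ factors through $\Ker v = \Im u$, giving $f_1 : M \to N_1$ with $uf_1 = gh_2 - f$. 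Now I would form the pullback of $g$ along $u$, say $K_1$ with projections to $K$ and to $N_1$; since pullbacks of epimorphisms are epimorphisms in an abelian category, $K_1 \to N_1$ is epic, and $N_1$-subprojectivity lifts $f_1$ to $h_1 : M \to K_1$. Composing with $K_1 \to K$ yields $h_1' : M \to K$ with $gh_1' = uf_1 = gh_2 - f$, whence $f = g(h_2 - h_1')$ and $h := h_2 - h_1'$ is the required lift.

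The delicate point is entirely in the extension case, and the two facts to justify carefully in the abelian setting are that $gh_2 - f$ factors through the subobject $\Ker v$ (via $v(gh_2 - f) = 0$ and the universal property of the kernel, identified with $u : N_1 \hookrightarrow N$) and that the pullback map $K_1 \to N_1$ of the epimorphism $g$ is again an epimorphism. Both are standard in any abelian category, so no additional hypotheses on $\mathscr{A}$ are needed.
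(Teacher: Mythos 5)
Your proof is correct, but the route through the crucial extension case is genuinely different from the paper's. The paper first builds, via the Horseshoe Lemma, a projective presentation $0\to P_A\to P_B\to P_C\to 0$ of the whole short exact sequence, applies $\Hom(M,-)$, and concludes that $\Hom(M,P_B)\to\Hom(M,B)$ is epic by a five-lemma-type chase, invoking the functorial criterion of Proposition \ref{Lemma-subproj}; you instead verify the lifting definition directly against an arbitrary epimorphism $g:K\to N$, by first lifting over the quotient $N_2$, observing that the defect $gh_2-f$ is killed by $v$ and hence factors through $\Ker v=N_1$, and then lifting that defect through the pullback of $g$ along $u$ (where the pullback of an epimorphism is again an epimorphism). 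Your argument buys some economy of hypotheses: the extension step uses neither the Horseshoe Lemma nor the existence of enough projectives, and it works against any epimorphism rather than a specially constructed projective one; the paper's argument buys uniformity, since it recycles Proposition \ref{Lemma-subproj}, which is the workhorse of the whole section. The two treatments of direct summands also differ mildly (the paper notes that the split projection $p:N\to A$ makes $\Hom(M,p)$ epic and applies Proposition \ref{Lemma-subproj} with $\mathcal{X}={\underline {\mathfrak{Pr}}}_{\mathscr{A}}^{-1}(M)$, whereas you compose a factorization of $\iota f$ with the retraction via Proposition \ref{prop-facto}), but both are immediate. Your reduction to single objects via ${\underline {\mathfrak{Pr}}}_{\mathscr{A}}^{-1}(\mathcal{L})=\bigcap_{M\in\mathcal{L}}{\underline {\mathfrak{Pr}}}_{\mathscr{A}}^{-1}(M)$ matches the paper's opening move, and all steps you flag as delicate (the factorization through $\Ker v$ and the stability of epimorphisms under pullback) are indeed standard in any abelian category, so there is no gap.
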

\begin{proof} Clearly it suffices to prove the result for subprojectivity domain of objects so let us consider a single object $M$ of $\A$ and study its subprojectivity domain.

For let $0 \rightarrow A \rightarrow B \rightarrow C \rightarrow 0$ be a short exact sequence of objects and suppose that  $A$ and $C$ are in ${{\underline {\mathfrak{Pr}}}_{\mathscr{A}}}^{-1}(M)$. Consider then two epimorphisms $P_A \to A$ and $P_C \to C$ with $P_A$ and $P_C$ projective. By Horseshoe Lemma we get the following commutative diagram $$\xymatrix{0\ar[r]& P_A \ar[r]\ar[d]&P_B\ar[r]\ar[d]&P_C\ar[r]\ar[d]&0\\
0\ar[r]& A \ar[r]&B\ar[r]&C\ar[r]&0 \\}$$ with $P_B$ is projective. Apply then $\Hom_{\A} (M,-)$ to get the commutative diagram $$\xymatrix{0\ar[r]& \Hom (M,P_A) \ar[r]\ar[d]&\Hom (M,P_B)\ar[r]\ar[d]&\Hom (M,P_C)\ar[r]\ar[d]&0 \\ 0\ar[r]& \Hom (M,A) \ar[r]&\Hom (M,B)\ar[r]&\Hom (M,C)\ar[r]&0}$$ with exact rows ($C$ holds in ${{\underline {\mathfrak{Pr}}}_{\mathscr{A}}}^{-1}(M)$).

Since $A$ and $C$ hold in ${{\underline {\mathfrak{Pr}}}_{\mathscr{A}}}^{-1}(M)$, the two morphisms $\Hom (M,P_A) \rightarrow \Hom (M,A)$ and $\Hom (M,P_C) \rightarrow \Hom (M,C)$ are epimorphisms. Hence, $\Hom (M,P_B) \rightarrow \Hom (M,B)$ is also an epimorphism and then we get $B \in {{\underline {\mathfrak{Pr}}}_{\mathscr{A}}}^{-1}(M)$ (by  Proposition \ref{Lemma-subproj}). 

Now, the closure under extensions of ${{\underline {\mathfrak{Pr}}}_{\mathscr{A}}}^{-1}(M)$ proves its closure under finite direct sums.

And finally, let $N\in {{\underline {\mathfrak{Pr}}}_{\mathscr{A}}}^{-1}(M)$ and $A$ be a direct summand of $N$. If $p: N \to A$ is the canonical projection then $\Hom(M, p)$ is epic and then, by   Proposition \ref{Lemma-subproj}, we get that $A \in {{\underline {\mathfrak{Pr}}}_{\mathscr{A}}}^{-1}(M)$.
\end{proof}

For subprojectivity domains that are closed  under kernels of epimorphisms, we have the following result.

\begin{prop} \label{lem-chara-cols-kernl}
Let $\mathcal{L}$ be a class of objects of $\mathscr{A}$. Then the following conditions are equivalent: 
\begin{enumerate}
\item ${\underline {\mathfrak{Pr}}}_\mathcal{A}^{-1}(\mathcal{L})$ is closed under kernels of epimorphisms.
\item For every short exact sequence $0\to C\to P \to A \to 0$ where $P$ is projective, if $A \in{\underline {\mathfrak{Pr}}}_\mathcal{A}^{-1}(\mathcal{L})$ then $C \in{\underline {\mathfrak{Pr}}}_\mathcal{A}^{-1}(\mathcal{L})$.
\item   For every epimorphism $P\to A$ with $P$ is projective and $A \in{\underline {\mathfrak{Pr}}}_\mathcal{A}^{-1}(\mathcal{L})$, the pullback object of $P$ over $A$ holds in ${\underline {\mathfrak{Pr}}}_\mathcal{A}^{-1}(\mathcal{L})$.
\end{enumerate}
\end{prop}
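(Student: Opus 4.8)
The plan is to abbreviate $\mathcal{D}:={\underline {\mathfrak{Pr}}}_{\mathscr{A}}^{-1}(\mathcal{L})$ and to use throughout that, by Proposition \ref{extension}, $\mathcal{D}$ contains every projective object and is closed under extensions, finite direct sums and direct summands. I would settle the first equivalence via $(1)\Rightarrow(2)\Rightarrow(1)$ and then treat $(2)\Leftrightarrow(3)$ separately. The implication $(1)\Rightarrow(2)$ is immediate: in a short exact sequence $0\to C\to P\to A\to 0$ with $P$ projective, both $P$ and $A$ lie in $\mathcal{D}$, and $C=\Ker(P\to A)$ is the kernel of the epimorphism $P\to A$, so (1) gives $C\in\mathcal{D}$.

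The main direction is $(2)\Rightarrow(1)$. Let $0\to K\to B\to A\to 0$ be exact with $A,B\in\mathcal{D}$; I must show $K\in\mathcal{D}$. Since $\mathscr{A}$ has enough projectives, I would choose an epimorphism $p:P\to A$ with $P$ projective, set $C=\Ker p$, and form the pullback $E$ of the cospan $B\to A\leftarrow P$. The two projections of a pullback square yield short exact sequences $0\to K\to E\to P\to 0$ and $0\to C\to E\to B\to 0$, since the pullback of an epimorphism is an epimorphism with the same kernel. The first sequence splits because $P$ is projective, so $E\cong K\oplus P$; meanwhile the sequence $0\to C\to P\to A\to 0$ together with hypothesis (2) gives $C\in\mathcal{D}$, whence $E\in\mathcal{D}$ by closure under extensions applied to the second sequence, and finally $K\in\mathcal{D}$ by closure under direct summands. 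The only nonroutine point is exactly this pullback manoeuvre: it replaces an arbitrary middle term $B$ by the projective middle term that (2) requires, at the cost of adjoining a projective direct summand to $K$, which the closure properties then discard.

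For $(2)\Leftrightarrow(3)$, given an epimorphism $P\to A$ with $P$ projective and $A\in\mathcal{D}$, I would analyse the pullback object $P\times_A P$: one projection $P\times_A P\to P$ is an epimorphism with kernel $C=\Ker(P\to A)$, and it splits since $P$ is projective, so $P\times_A P\cong C\oplus P$. Thus (2) forces $C\in\mathcal{D}$, hence $P\times_A P\cong C\oplus P\in\mathcal{D}$ by closure under finite direct sums, which is (3); conversely (3) gives $C\oplus P\in\mathcal{D}$, and closure under direct summands returns $C\in\mathcal{D}$, which is precisely the conclusion of (2) for the sequence $0\to C\to P\to A\to 0$. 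This closes the cycle of equivalences.
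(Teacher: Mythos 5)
Your proposal is correct and follows essentially the same route as the paper: the same pullback of $B\to A\leftarrow P$ reducing the general kernel to the projective-cover case via closure under extensions and direct summands, and the same identification of the pullback $P\times_A P$ with $C\oplus P$ for the equivalence of (2) and (3). Only the notation (your $K$ versus the paper's $C$) differs.
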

\begin{proof}
$ (1)\Rightarrow (2)$ is clear. To prove $ (2)\Rightarrow (1)$ consider an exact sequence $$0\to C\to B\to A\to 0$$ with $B,\ A\in {\underline {\mathfrak{Pr}}}_\mathcal{A}^{-1}(\mathcal{L})$ and the pullback diagram $$
\xymatrix{
&&  0 \ar[d]&  0 \ar[d]&\\
	 &&  K \ar@{=}[r] \ar[d]&  K \ar[d]\\
	0\ar[r]& C \ar[r] \ar@{=}[d]&  D \ar[r] \ar[d]&  P \ar[r] \ar[d]&  0\\
	0\ar[r]& C \ar[r] &  B \ar[r] \ar[d] &  A \ar[r] \ar[d] &  0\\
	&&0&0&
}
$$ where $P$ is a projective object. $A \in{\underline {\mathfrak{Pr}}}_\mathcal{A}^{-1}(\mathcal{L})$, so by assumption $K \in{\underline {\mathfrak{Pr}}}_\mathcal{A}^{-1}(\mathcal{L})$. Then, by Proposition \ref{extension}, $D \in{\underline {\mathfrak{Pr}}}_\mathcal{A}^{-1}(\mathcal{L})$, and since $C$ is a direct summand of $D$, we deduce using again Proposition \ref{extension} that $C \in{\underline {\mathfrak{Pr}}}_\mathcal{A}^{-1}(\mathcal{L})$.

To prove $ (3)\Leftrightarrow (2)$, consider the following diagram where $D$ is the pullback of $P$ over $A$ $$\xymatrix{
0\ar[r]& C \ar[r] \ar@{=}[d]&  D \ar[r] \ar[d]&  P \ar[r] \ar[d]&  0\\
	0\ar[r]& C \ar[r] &  P \ar[r]&  A \ar[r] &  0
}$$

Suppose that $A \in{\underline {\mathfrak{Pr}}}_\mathcal{A}^{-1}(\mathcal{L})$. By Proposition \ref{extension}, we have $D \in{\underline {\mathfrak{Pr}}}_\mathcal{A}^{-1}(\mathcal{L})$ if and only if $C \in{\underline {\mathfrak{Pr}}}_\mathcal{A}^{-1}(\mathcal{L})$.
\end{proof}

As examples of classes satisfying the conditions of Proposition \ref{lem-chara-cols-kernl}, we give the following.

\begin{ex}\label{ex-SGP-FG}
\begin{enumerate}
\item Let $M$ be a strongly Gorenstein projective object. Then ${\underline {\mathfrak{Pr}}}_\mathcal{A}^{-1}(M)$ is closed under kernels of epimorphisms.

Indeed, let $0\to C\to P \to A \to 0 $ be a short exact sequence with $P$ projective and $A\in{\underline {\mathfrak{Pr}}}_\mathcal{A}^{-1}(M)$. If we consider the long exact sequence $$\cdots \to \Hom(M,P) \to \Hom(M,A) \to \Ext^1(M,C) \to \Ext^1(M,P) \to \cdots,$$ then $\Ext^1(M,C)=0$ so $C\in{\underline {\mathfrak{Pr}}}_\mathcal{A}^{-1}(M)$ (see Corollary \ref{cor-subproj-sgoren-ob}). Therefore, by Proposition \ref{lem-chara-cols-kernl}, ${\underline {\mathfrak{Pr}}}_\mathcal{A}^{-1}(M)$ is closed under kernels of epimorphisms.

\item Let $\mathcal{L}$ be any class of finitely generated modules containing all finitely presented modules. Then, the subprojectivity domain of $\mathcal{L}$ is closed under kernels of epimorphisms. In particular, the class of f-projective modules is closed under kernels of epimorphisms.

To show this, let $0 \to A \to B \to C \to 0$ be a short exact sequence with $B,\ C \in {{\underline {\mathfrak{Pr}}}_{R-\Mod}}^{-1}(\mathcal{L})$. Since $\mathcal{L}$ contains all finitely presented modules the sequence $0 \to A \to B \to C \to 0$ is pure, so by \cite[Proposition 2.6]{Rada-Parra} $ A \in {{\underline {\mathfrak{Pr}}}_{R-\Mod}}^{-1}(\mathcal{L})$. 
\end{enumerate}
\end{ex}

In \cite[Proposition 2.15]{Sergio} it is proved that a ring $R$ is right hereditary if and only if the subprojectivity domain of any right $R$-module is closed under submodules. Since ${\underline {\mathfrak{Pr}}}_{\Mod-R}^{-1}(\mbox{\Mod-}R)$ is the class of projective right $R$-modules, one could replace the statement  ``$R$ is right hereditary" by ``${\underline {\mathfrak{Pr}}}_{\Mod-R}^{-1}(\mbox{\Mod-}R)$ is closed under submodules", getting then that ${\underline {\mathfrak{Pr}}}_{Mod-R}^{-1}(\mbox{\Mod-}R)$ is closed under submodules if and only if ${\underline {\mathfrak{Pr}}}_{Mod-R}^{-1}(M)$ is closed under submodules for every right $R$-module $M$. Thus,  the next proposition gives an extension  of this result to an arbitrary class $\mathcal{L}$ of objects of $\mathscr A$. 

\begin{prop} \label{clsub} 
Let $\mathcal{L}$ be a class of objects of $\mathscr{A}$. Then the following two conditions are equivalent:
\begin{enumerate}
\item The subprojectivity domain of $\mathcal{L}$ is closed under subobjects.
\item The subprojectivity domain of any object of $\mathcal{L}$ is closed under subobjects.
\end{enumerate}
\end{prop}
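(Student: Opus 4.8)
The plan is to prove both implications at once by reducing each of conditions (1) and (2) to a single common statement, namely that the relevant subprojectivity domain contains every subobject of a projective object. The implication $(2)\Rightarrow(1)$ is immediate once one recalls that ${\underline {\mathfrak{Pr}}}_{\mathscr{A}}^{-1}(\mathcal{L})=\bigcap_{L\in\mathcal{L}}{\underline {\mathfrak{Pr}}}_{\mathscr{A}}^{-1}(L)$, since an intersection of classes each closed under subobjects is again closed under subobjects. So the real content lies in $(1)\Rightarrow(2)$, and the whole argument is streamlined by first isolating a characterization lemma.

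The key step I would establish is the following: for any object $M$ (and, verbatim, for any class), ${\underline {\mathfrak{Pr}}}_{\mathscr{A}}^{-1}(M)$ is closed under subobjects if and only if it contains every subobject of every projective object. One direction is trivial, since every projective object lies in ${\underline {\mathfrak{Pr}}}_{\mathscr{A}}^{-1}(M)$ by Proposition \ref{prop-facto} (any $f:M\to P$ factors through $P$ itself). For the converse --- which is the main obstacle --- I would take $N\in{\underline {\mathfrak{Pr}}}_{\mathscr{A}}^{-1}(M)$, a monomorphism $i:N'\hookrightarrow N$, and an arbitrary $g:M\to N'$, and show that $g$ factors through a projective, so that $N'\in{\underline {\mathfrak{Pr}}}_{\mathscr{A}}^{-1}(M)$ by Proposition \ref{prop-facto}. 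Since $ig:M\to N$ factors as $ig=\beta\alpha$ through a projective $P$, I form the pullback
$$\xymatrix{ D \ar[r]^{p_2} \ar[d]_{p_1} & N' \ar[d]^{i} \\ P \ar[r]_{\beta} & N }$$
Here $p_1$ is monic (being the pullback of the monomorphism $i$), so $D$ is a subobject of the projective $P$ and hence lies in ${\underline {\mathfrak{Pr}}}_{\mathscr{A}}^{-1}(M)$ by hypothesis. The equality $\beta\alpha=ig$ makes $(\alpha,g)$ a cone over the cospan, so the pullback property yields $\gamma:M\to D$ with $p_2\gamma=g$ and $p_1\gamma=\alpha$; factoring $\gamma$ through a projective $Q$ (possible since $D\in{\underline {\mathfrak{Pr}}}_{\mathscr{A}}^{-1}(M)$) then exhibits $g=p_2\gamma$ as a morphism factoring through $Q$, as desired.

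With this lemma in hand the proposition is immediate. Condition (1) says ${\underline {\mathfrak{Pr}}}_{\mathscr{A}}^{-1}(\mathcal{L})$ is closed under subobjects, equivalently (by the class version of the lemma) that it contains every subobject of a projective. Because ${\underline {\mathfrak{Pr}}}_{\mathscr{A}}^{-1}(\mathcal{L})=\bigcap_{L\in\mathcal{L}}{\underline {\mathfrak{Pr}}}_{\mathscr{A}}^{-1}(L)$, a subobject $D$ of a projective lies in the intersection if and only if it lies in each ${\underline {\mathfrak{Pr}}}_{\mathscr{A}}^{-1}(L)$; swapping the two universal quantifiers shows that ``${\underline {\mathfrak{Pr}}}_{\mathscr{A}}^{-1}(\mathcal{L})$ contains all subobjects of projectives'' is equivalent to ``each ${\underline {\mathfrak{Pr}}}_{\mathscr{A}}^{-1}(L)$ contains all subobjects of projectives'', which by the lemma again is exactly condition (2). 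The only point requiring care is checking that the pullback argument of the lemma passes to a class: for a fixed $L\in\mathcal{L}$ it uses only that $N$ is subprojective relative to that single $L$ (supplied by $N\in{\underline {\mathfrak{Pr}}}_{\mathscr{A}}^{-1}(\mathcal{L})$), while the membership $D\in{\underline {\mathfrak{Pr}}}_{\mathscr{A}}^{-1}(\mathcal{L})\subseteq{\underline {\mathfrak{Pr}}}_{\mathscr{A}}^{-1}(L)$ furnishes the factorization of $\gamma$. Both hold verbatim, so the reduction is clean.
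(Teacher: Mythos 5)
Your proof is correct and rests on the same core idea as the paper's: pull the subobject inclusion back along a map from a projective so that hypothesis (1) need only be applied to a subobject of a projective object (which automatically lies in ${\underline {\mathfrak{Pr}}}_{\mathscr{A}}^{-1}(\mathcal{L})$ since projectives do). The paper runs this argument directly with an epimorphism $P\to B$ and concludes via Proposition \ref{Lemma-subproj}, whereas you package the same mechanism as an explicit characterization lemma (``closed under subobjects iff contains all subobjects of projectives'') and conclude morphism-by-morphism via Proposition \ref{prop-facto}; the difference is organizational rather than mathematical.
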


\begin{proof}
$ (2)\Rightarrow (1)$ is immediate.

To prove $ (1)\Rightarrow (2)$ let $M\in \mathcal{L}$ and suppose that $B \in {\underline {\mathfrak{Pr}}}_{\mathscr{A}}^{-1}(M)$. Now let $A$ be a subobject of $B$. We get the following pullback diagram $$\xymatrix{0\ar[r]& D \ar[r]\ar[d]_g & P\ar[r]\ar[d]& C \ar[r]\ar@{=}[d]&0 \\ 0\ar[r]& A \ar[r]& B\ar[r]& C\ar[r]&0}$$ where $P \to B$ is an epimorphism and $P$ is projective. Now apply the functor $\Hom(M,-)$ to the previous diagram getting the following commutative diagram with exact rows $$\xymatrix{0\ar[r]& \Hom (M,D) \ar[r]\ar[d]_{\Hom(M,g)}&\Hom (M,P)\ar[r]\ar[d]&\Hom (M,C)\ar@{=}[d] \\ 0\ar[r]& \Hom (M,A) \ar[r]&\Hom (M,B)\ar[r]&\Hom (M,C) }$$

Since $B \in {{\underline {\mathfrak{Pr}}}_{\mathscr{A}}}^{-1}(M)$, we conclude that $\Hom(M,g)$ is epic. Now,  being a projective module, $P\in  {\underline {\mathfrak{Pr}}}_{\mathscr{A}}^{-1}(\mathcal{L})$, then, by $(1)$, $D\in  {\underline {\mathfrak{Pr}}}_{\mathscr{A}}^{-1}(\mathcal{L})$.  Therefore, using Proposition \ref{Lemma-subproj}, we get that $A \in {{\underline {\mathfrak{Pr}}}_{\mathscr{A}}}^{-1}(M)$. 
\end{proof} 

As a consequence, we get the following result, established first in \cite[Proposition 2.4]{Dur2}.
	
\begin{cor}\label{cor-wdim-fp}
Let $R$ be a ring. Then $\wdim R \leq 1$ if and only if the subprojectivity domain of each finitely presented module is closed under submodules.
\end{cor}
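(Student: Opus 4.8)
Corollary \ref{cor-wdim-fp} states that $\wdim R \leq 1$ if and only if the subprojectivity domain of each finitely presented module is closed under submodules. The plan is to reduce this to the two results we already have at hand, namely Proposition \ref{prop-sub-fp-flat}, which identifies the subprojectivity domain of the class of all finitely presented modules with the class of flat modules, and Proposition \ref{clsub}, which shows that the subprojectivity domain of the \emph{class} of finitely presented modules is closed under subobjects if and only if the subprojectivity domain of \emph{each} finitely presented module is closed under subobjects. Combining these, the condition ``the subprojectivity domain of each finitely presented module is closed under submodules'' is equivalent to ``the class of flat modules is closed under submodules''. So the whole statement collapses to the classical homological fact that $\wdim R \leq 1$ if and only if every submodule of a flat module is flat.

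First I would invoke Proposition \ref{clsub} applied to the class $\mathcal{L}$ of all finitely presented modules: the subprojectivity domain of each finitely presented module is closed under submodules precisely when ${\underline{\mathfrak{Pr}}}_{R\text{-}\Mod}^{-1}(\mathcal{L})$ is closed under submodules. Then I would apply Proposition \ref{prop-sub-fp-flat} to rewrite ${\underline{\mathfrak{Pr}}}_{R\text{-}\Mod}^{-1}(\mathcal{L})$ as the class of flat modules. At this point the proof is reduced to showing that the class of flat modules is closed under submodules if and only if $\wdim R \leq 1$.

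The remaining equivalence is standard but I would spell it out via flat dimensions. If $\wdim R \leq 1$, then every module has flat dimension at most $1$; given a submodule $A$ of a flat module $F$, the short exact sequence $0 \to A \to F \to F/A \to 0$ together with $\fd(F/A) \leq 1$ forces $\fd(A) = 0$, so $A$ is flat. Conversely, if every submodule of a flat module is flat, then for an arbitrary module $M$ I would take a short exact sequence $0 \to K \to P \to M \to 0$ with $P$ projective (hence flat); the kernel $K$ is a submodule of a flat module, hence flat, which gives $\fd(M) \leq 1$. Since $M$ is arbitrary, $\wdim R \leq 1$.

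The main obstacle here is essentially bookkeeping rather than substance: the real content is packaged into Propositions \ref{prop-sub-fp-flat} and \ref{clsub}, and the only genuinely homological input is the well-known characterization of $\wdim R \leq 1$ via submodules of flat modules. I expect the delicate point, if any, to be making sure the hypothesis of Proposition \ref{clsub} is correctly matched—that is, that the statement about ``each finitely presented module'' translates exactly to the statement about the subprojectivity domain of the whole class—and confirming that no additional finiteness or coherence hypotheses on $R$ sneak in through the identification of the subprojectivity domain with flat modules. Once those are checked, the proof is immediate.
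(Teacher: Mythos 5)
Your proposal is correct and follows essentially the same route as the paper: combine Proposition \ref{prop-sub-fp-flat} (the subprojectivity domain of the class of finitely presented modules is the class of flat modules) with Proposition \ref{clsub}, and then invoke the classical characterization of $\wdim R \leq 1$ via submodules of flat modules. The only difference is that you spell out the standard flat-dimension argument that the paper simply cites as known.
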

\begin{proof} We know that $\wdim R \leq 1$ if and only if flat modules are closed under submodules. But the subprojectivity domain of the class of finitely presented modules is precisely the class of flat modules. Then, we just have to apply Proposition \ref{clsub}.
\end{proof}

Recall that $R$ is left coherent if and only if the category $\mathcal{FG}$ of finitely generated (left) modules is abelian. Then, letting both $\mathcal{L}$ and $\mathscr A$ be the abelian category of finitely generated modules and applying Proposition \ref{clsub} we get the following.

\begin{cor} \label{cor-R-semihered}
Let $R$ be a left coherent ring. Then, $R$ is left semihereditary if and only if the subprojectivity domain of each finitely generated module is closed under submodules.
\end{cor}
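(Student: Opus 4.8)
The plan is to run exactly the same strategy as in the proof of Corollary \ref{cor-wdim-fp}, but now carried out inside the abelian category $\mathcal{FG}$ of finitely generated left $R$-modules rather than in $R$-Mod. Since $R$ is left coherent, $\mathcal{FG}$ is abelian, and it has enough projectives: every finitely generated module is an epic image of a finitely generated free module, which is projective in $\mathcal{FG}$. Hence the whole framework of Section \ref{sec2}, and in particular Proposition \ref{clsub}, is available with $\mathscr{A}=\mathcal{FG}$.

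First I would identify the subprojectivity domain of $\mathcal{FG}$, taken as a subclass of itself, with the class of projective objects of $\mathcal{FG}$. Indeed, if $N\in{\underline {\mathfrak{Pr}}}_{\mathcal{FG}}^{-1}(\mathcal{FG})$ then in particular $N$ is $N$-subprojective, so $N$ is projective by Proposition \ref{prop-whole-cat-obj}; the reverse inclusion is immediate since projective objects lie in every subprojectivity domain. The projective objects of $\mathcal{FG}$ are precisely the finitely generated projective modules.

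Next I would apply Proposition \ref{clsub} with $\mathcal{L}=\mathcal{FG}$: the subprojectivity domain of $\mathcal{FG}$ is closed under subobjects if and only if the subprojectivity domain of each (finitely generated) object of $\mathcal{FG}$ is closed under subobjects. By the previous step, the former condition says exactly that the class of finitely generated projective modules is closed under subobjects in $\mathcal{FG}$, that is, that every finitely generated submodule of a finitely generated projective module is again projective. This is precisely the condition that $R$ be left semihereditary, since a finitely generated left ideal being projective is equivalent to every finitely generated submodule of a finitely generated (equivalently, free) projective module being projective. Combining these two translations with Proposition \ref{clsub} yields the claimed equivalence.

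The main point to verify carefully is this last dictionary between the categorical statement and the ring-theoretic notion of semihereditariness, together with the fact that a subobject in the abelian category $\mathcal{FG}$ coincides with a finitely generated submodule. Here left coherence of $R$ is exactly what guarantees that kernels and images of morphisms between finitely generated modules stay finitely generated, so that $\mathcal{FG}$ genuinely is abelian and ``subobject'' behaves as expected. Once these identifications are in place the result is a direct consequence of Proposition \ref{clsub}, mirroring the proof of Corollary \ref{cor-wdim-fp}.
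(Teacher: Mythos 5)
Your proposal is correct and follows essentially the same route as the paper: the paper likewise passes to the abelian category $\mathcal{FG}$ of finitely generated modules (abelian by left coherence), applies Proposition \ref{clsub} with $\mathcal{L}=\mathscr{A}=\mathcal{FG}$, and uses the fact that $Proj(\mathcal{FG})$ is exactly the class of finitely generated projective modules. Your write-up merely spells out the identifications that the paper's two-line proof leaves implicit.
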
  
\begin{proof} The result follows from the fact that $Proj(\mathcal{FG})$ is precisely the class of finitely generated projective modules. 
\end{proof}

In \cite[Proposition 2.14]{Sergio} it is studied when the subprojectivity domain of any module is closed under arbitrary direct products. This can be extended to the categorical setting provided (of course) that $\mathscr{A}$ has direct products. 

\begin{prop}\label{clprod}
Suppose that $\mathscr{A}$ has direct products and let $\mathcal{L}$ be a class of objects of $\mathscr{A}$. Then the following conditions are equivalent:
\begin{enumerate}
\item The subprojectivity domain of $\mathcal{L}$ is closed under arbitrary direct products.
\item The subprojectivity domain of any object of  $\mathcal{L}$ is closed under arbitrary direct products.
\end{enumerate}
\end{prop}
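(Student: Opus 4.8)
The plan is to treat the two implications separately. The implication $(2)\Rightarrow(1)$ is purely formal: by definition ${\underline {\mathfrak{Pr}}}_{\mathscr{A}}^{-1}(\mathcal{L})=\bigcap_{M\in\mathcal{L}}{\underline {\mathfrak{Pr}}}_{\mathscr{A}}^{-1}(M)$, so if $\{N_j\}_{j\in J}$ is a family in ${\underline {\mathfrak{Pr}}}_{\mathscr{A}}^{-1}(\mathcal{L})$ then each $N_j$ lies in ${\underline {\mathfrak{Pr}}}_{\mathscr{A}}^{-1}(M)$ for every $M\in\mathcal{L}$; by $(2)$ the product $\prod_{j}N_j$ then lies in each ${\underline {\mathfrak{Pr}}}_{\mathscr{A}}^{-1}(M)$, hence in their intersection ${\underline {\mathfrak{Pr}}}_{\mathscr{A}}^{-1}(\mathcal{L})$.

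All the content is in $(1)\Rightarrow(2)$. I would fix $M\in\mathcal{L}$ and a family $\{N_j\}_{j\in J}$ in ${\underline {\mathfrak{Pr}}}_{\mathscr{A}}^{-1}(M)$, and aim to show $\prod_j N_j\in{\underline {\mathfrak{Pr}}}_{\mathscr{A}}^{-1}(M)$. For each $j$ I choose an epimorphism $g_j:P_j\to N_j$ with $P_j$ projective; since $M$ is $N_j$-subprojective, $\Hom(M,g_j)$ is an epimorphism. The tempting move is to factor a morphism $M\to\prod_j N_j$ through $\prod_j P_j$, but this breaks down because $\prod_j P_j$ need not be projective, and this is exactly the point at which the hypothesis $(1)$ has to be used.

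To get around this, I observe that each $P_j$ is projective and therefore lies in ${\underline {\mathfrak{Pr}}}_{\mathscr{A}}^{-1}(\mathcal{L})$, so hypothesis $(1)$ forces the product $\prod_j P_j$ to lie in ${\underline {\mathfrak{Pr}}}_{\mathscr{A}}^{-1}(\mathcal{L})$, and in particular in ${\underline {\mathfrak{Pr}}}_{\mathscr{A}}^{-1}(M)$. It then remains to transport subprojectivity along the product morphism $\prod_j g_j:\prod_j P_j\to\prod_j N_j$. Here I use that $\Hom(M,-)$ commutes with products: under the canonical identifications $\Hom(M,\prod_j P_j)\cong\prod_j\Hom(M,P_j)$ and $\Hom(M,\prod_j N_j)\cong\prod_j\Hom(M,N_j)$, the map $\Hom(M,\prod_j g_j)$ becomes $\prod_j\Hom(M,g_j)$, a product of epimorphisms of abelian groups and hence itself an epimorphism. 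Thus $\Hom(M,\prod_j g_j)$ is an epimorphism whose source $\prod_j P_j$ belongs to ${\underline {\mathfrak{Pr}}}_{\mathscr{A}}^{-1}(M)$, and Proposition \ref{Lemma-subproj}, applied with $\mathcal{X}={\underline {\mathfrak{Pr}}}_{\mathscr{A}}^{-1}(M)$ as permitted by the remark following it, yields that $M$ is $\prod_j N_j$-subprojective.

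The single delicate step is the one flagged above: since products of projectives are generally not projective, closure under products is genuinely nontrivial, and the role of hypothesis $(1)$ is precisely to guarantee that $\prod_j P_j$ still lies in the relevant subprojectivity domain. Once this is granted, the rest is formal, resting only on the commutation of $\Hom(M,-)$ with products and on the functorial criterion of Proposition \ref{Lemma-subproj}.
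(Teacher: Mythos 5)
Your proof is correct and follows essentially the same route as the paper's: both reduce $(1)\Rightarrow(2)$ to noting that the projectives $P_j$ lie in ${\underline {\mathfrak{Pr}}}_{\mathscr{A}}^{-1}(\mathcal{L})$, so hypothesis $(1)$ places $\prod_j P_j$ there (hence in ${\underline {\mathfrak{Pr}}}_{\mathscr{A}}^{-1}(M)$), and then use the commutation of $\Hom(M,-)$ with products together with Proposition \ref{Lemma-subproj} to conclude. The only cosmetic difference is that you spell out the intersection argument for $(2)\Rightarrow(1)$, which the paper dismisses as immediate.
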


\begin{proof}
$(2) \Rightarrow (1)$ is immediate.

For $(1) \Rightarrow (2)$ let $M$ be an object of $\mathcal{L}$, $\{ N_i \}_{i\in I}$ be a family of objects in ${{\underline {\mathfrak{Pr}}}_{\mathscr{A}}}^{-1}(M)$ and $\{g_i : P_i \to N_i \}_{i\in I}$ be a family of epimorphisms where each $P_i$ is projective. Consider the following commutative diagram $$\xymatrix{\Hom(M,\prod_{i\in I}P_i)  \ar[d]_{\psi^P}  \ar[rr]^{\Hom(M, \prod_{i\in I}g_i)} && \Hom(M, \prod_{i\in I}N_i) \ar[d]^{\psi^N} \\ \prod_{i\in I}\Hom( M, P_i) \ar[rr]^{\prod_{i\in I}{\Hom(M, g_i)}} && \prod_{i\in I}\Hom( M, N_i)}$$ where ${\psi^N}$ and ${\psi^P}$ are the natural isomorphisms. The commutativity of the above diagram gives that $\Hom(M, \prod_{i\in I}g_i)$ is epic. Since each $P_i$ is in ${{\underline {\mathfrak{Pr}}}_{\mathscr{A}}}^{-1}(\mathcal{L})$,  $\prod_{i\in I}P_i$ is by assumption in ${{\underline {\mathfrak{Pr}}}_{\mathscr{A}}}^{-1}(\mathcal{L})$. Then $\prod_{i\in I}P_i$ is in ${{\underline {\mathfrak{Pr}}}_{\mathscr{A}}}^{-1}(M)$. By Proposition \ref{Lemma-subproj}, $\prod_{i\in I}N_i \in{{\underline {\mathfrak{Pr}}}_{\mathscr{A}}}^{-1}(M)$ as desired.
\end{proof}

The result \cite[Proposition 2.14]{Sergio} shows that  a ring $R$ is a right perfect and left coherent ring if and only if the subprojectivity domain of any right module is closed under arbitrary direct products. This holds since ${\underline {\mathfrak{Pr}}}_{R-\Mod}^{-1}(R\mbox{-\Mod})$ is the class of projective modules. Here we can give a much direct proof of a characterization of coherent rings given by Durğun in \cite[Proposition 2.3]{Dur2} using also the same property applied to a different class.

\begin{cor}\label{cor-R-coherent}
Let $R$ be a ring. Then $R$ is right coherent if and only if the subprojectivity domain of any finitely presented left module is closed under direct products.
\end{cor}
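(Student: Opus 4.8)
The plan is to chain together the two structural results already established with the classical Chase characterization of coherence. First I would apply Proposition \ref{clprod} to the category $\mathscr{A}=R$-\Mod\ (which has direct products) with $\mathcal{L}$ taken to be the class of all finitely presented left $R$-modules. This immediately reduces the statement about individual modules to a statement about a single class: the subprojectivity domain of every finitely presented left module is closed under direct products if and only if the subprojectivity domain of the whole class of finitely presented left modules is closed under direct products.

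Next I would make this subprojectivity domain explicit. By Proposition \ref{prop-sub-fp-flat}, the subprojectivity domain of the class of finitely presented modules is precisely the class of flat left $R$-modules. Combining this with the previous step, the condition ``the subprojectivity domain of each finitely presented left module is closed under direct products'' is equivalent to ``the class of flat left $R$-modules is closed under arbitrary direct products''. The proof then concludes by invoking the classical theorem of Chase, which asserts that $R$ is right coherent if and only if arbitrary direct products of flat left $R$-modules are flat (equivalently, $R^I$ is a flat left module for every index set $I$). Concatenating the three equivalences gives the statement.

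The essential mathematical input is Chase's theorem itself; the two reduction steps are purely formal applications of the preceding propositions, so there is no genuine computational obstacle. The one point that requires care is the bookkeeping of sides: it is \emph{right} coherence of $R$ that corresponds to closure under products of flat \emph{left} modules, which is the mirror image of Chase's original formulation (where left coherence corresponds to products of flat right modules being flat). Once this sidedness is correctly tracked, everything else is a direct translation, and this is exactly what makes the argument ``much more direct'' than the route through \cite[Proposition 2.3]{Dur2}.
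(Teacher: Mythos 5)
Your proposal is correct and follows exactly the route the paper intends: apply Proposition \ref{clprod} to the class of finitely presented left modules, identify its subprojectivity domain as the class of flat modules via Proposition \ref{prop-sub-fp-flat}, and conclude by Chase's theorem that right coherence of $R$ is equivalent to closure of flat left modules under direct products. The sidedness is also tracked correctly, so nothing is missing.
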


Given a class of finitely generated modules $\mathcal{S}$, the authors in \cite{Rada-Parra} define $\mathcal{S}$-proj to be the class of modules $N$ such that every morphism $f:S\to N$, where $S\in \mathcal{S}$, factors through a free module. Using Proposition \ref{prop-facto} one can easily see that $\mathcal{S}$-proj is precisely ${{\underline{\mathfrak{Pr}}}_{R-\Mod}}^{-1}(\mathcal{S})$. In \cite[Theorem 3.1]{Rada-Parra}, the authors proved that for any class of finitely generated modules $\mathcal{S}$, ${{\underline {\mathfrak{Pr}}}_{R-\Mod}}^{-1}(\mathcal{S})$ is closed under direct products if and only if any module of $\mathcal{S}$ admits a ${{\underline {\mathfrak{Pr}}}_{R-\Mod}}^{-1}(\mathcal{S})$-preenvelope, if and only if any module of $\mathcal{S}$ admits a projective preenvelope. Now we want to prove this fact for any class of objects.

Recall that a class $\mathcal{F}$ of objects is locally initially small if for every object $M$ of $\A$ there exists a set $\mathcal{F}_M\subseteq \mathcal{F}$ such that every morphism $M\to F$, where $F\in \mathcal{F}$, factors through a direct product of modules in $\mathcal{F}_M$ (see \cite{RS98}). In \cite[Proposition 2.9]{RS98} it is established that the class of projective modules is always locally initially small. The argument consists in proving that for any set $X$, the class $Summ(X)$ is locally initially small, and following the arguments given in \cite{RS98} it is easy to see that this holds in any Grothendieck category with enough projectives. There is only a step of the proof which could deserve a special treatment, namely when the authors prove that the equivalence class $[i]$ is finite. We now give a proof of this fact. 

\begin{prop}\label{fg-finite-set} 
Suppose that $\mathcal{A}$ is a cocomplete abelian category with exact direct limits. Let $X$ be a finitely generated object of $\mathcal{A}$ and $\{A_i\}_{i\in I}$ be a family of objects of $\mathcal{A}$. Then, for every morphism $g:X\to \oplus_{i\in I}A_i$, the set $\{i\in I/ \pi_i g\neq 0\}$ is finite, where $\pi_j:\oplus_{i\in I} A_i\to A_j$ is the canonical projection for every $j\in I$.
\end{prop}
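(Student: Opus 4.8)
The plan is to reduce the statement to the defining property of a finitely generated object, namely that whenever $X$ is the sum of a directed family of subobjects, one of them already equals $X$. Write $B=\oplus_{i\in I}A_i$, and for each finite subset $F\subseteq I$ set $B_F=\oplus_{i\in F}A_i$, regarded as a subobject of $B$ via the canonical inclusion. The family $\{B_F\}$, indexed by the directed poset of finite subsets of $I$ ordered by inclusion, is a directed family of subobjects of $B$ whose sum (equivalently, direct limit) is all of $B$; in fact $B=\varinjlim_F B_F$. The goal is to show that the given morphism $g$ factors through some $B_{F_0}$, for then $\pi_i g=0$ whenever $i\notin F_0$, and $\{i: \pi_i g\neq 0\}\subseteq F_0$ is finite.

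To produce such a factorization I would transport the directed family $\{B_F\}$ to a directed family of subobjects of $X$ by taking preimages. For each finite $F$ put $X_F=g^{-1}(B_F)$, i.e. the kernel of the composite $X\xrightarrow{g}B\to B/B_F$; these form a directed family of subobjects of $X$, and by the first isomorphism theorem each quotient $X/X_F$ embeds as a subobject of $B/B_F$. The crux is to prove that $\sum_F X_F=X$. Here is where the hypothesis that $\mathcal{A}$ has exact direct limits is essential: since the direct limit functor is exact, it preserves monomorphisms and cokernels, so from $\varinjlim_F B_F=B$ we get $\varinjlim_F (B/B_F)=B/B=0$, and the monomorphisms $X/X_F\hookrightarrow B/B_F$ pass to a monomorphism $\varinjlim_F (X/X_F)\hookrightarrow \varinjlim_F(B/B_F)=0$. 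Thus $\varinjlim_F(X/X_F)=0$; but exactness also gives $\varinjlim_F(X/X_F)=X/\varinjlim_F X_F=X/\sum_F X_F$, whence $\sum_F X_F=X$.

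With $\sum_F X_F=X$ established, the finite generation of $X$ applies directly: the $X_F$ form a directed family of subobjects of $X$ summing to $X$, so there is a finite $F_0$ with $X_{F_0}=X$. This says exactly that $X=g^{-1}(B_{F_0})$, i.e. that $X\xrightarrow{g}B\to B/B_{F_0}$ is zero, so $g$ factors through $B_{F_0}=\oplus_{i\in F_0}A_i$. Composing with the projections $\pi_i$ and using that $\pi_i$ vanishes on $B_{F_0}$ for $i\notin F_0$ yields $\pi_i g=0$ for all $i\notin F_0$, which is the desired finiteness.

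I anticipate the main obstacle is the interchange of preimage and directed union, that is, the identity $\sum_F g^{-1}(B_F)=g^{-1}\bigl(\sum_F B_F\bigr)$ that underlies $\sum_F X_F=X$; this is precisely the point at which exactness of direct limits (the AB5 condition) cannot be dispensed with, since in a merely cocomplete abelian category directed unions need not commute with preimages. The remaining verifications — that $\{B_F\}$ is directed with union $B$, that $X/X_F\hookrightarrow B/B_F$, and that direct limits of subobjects compute their sums — are routine once exactness is in hand.
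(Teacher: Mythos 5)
Your proof is correct. It shares the paper's basic strategy --- filter $\oplus_{i\in I}A_i$ by the finite partial sums $B_F$, pull that directed family back to the source, and let finite generation force stabilization at some finite $F_0$ --- but the execution is genuinely different in two respects. The paper first takes the epi--mono factorization $g=f\bar g$ through the image $M=\Im\,g$, forms the intersections $M\cap A_F$ as pullbacks along the monomorphism $f$, and quotes Popescu--Popescu for the identity $M=\sum_F(M\cap A_F)$; it then applies finite generation to $M$ (implicitly using that an epimorphic image of a finitely generated object is finitely generated). You instead work with the full preimages $X_F=g^{-1}(B_F)$ in $X$ itself and prove the needed identity $\sum_F X_F=X$ from scratch, by embedding $X/X_F$ into $B/B_F$ and using exactness of direct limits to kill $\varinjlim_F(X/X_F)$. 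Your route buys a self-contained argument that needs neither the external citation nor the auxiliary fact about images of finitely generated objects, and it isolates cleanly the one place where the AB5 hypothesis is indispensable; the paper's route is shorter on the page because the key directed-union identity is outsourced to a reference.
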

\begin{proof}
Let $M=\Im\ g$ and consider the epic-monic decomposition of $g$, $$\xymatrix{X \ar[rr]^g \ar[rd]_{\bar{g}} & & \oplus_{i\in I}A_i \\ & M \ar[ru]_{f}}$$

If we let $\mathscr{F}$ be the set of all finite subsets of $I$ and define, for every $F\in \mathscr{F}$, $A_F$ to be the image in $\oplus_{i\in I} A_i$ of $\oplus_{i\in F} A_i$, condition (5) of \cite[Chapter 4, Theorem 4.6]{Popescu} says that $M=\sum_{F\in \mathscr{F}}(M\cap A_F)$. 

Now, for every $F\in \mathscr{F}$, we define $(M\cap A_F,\gamma_F,\eta_F)$ as the pullback of $\alpha_F:A_F \to \oplus_{i\in I} A_i$ and $f:M\to \oplus_{i\in I} A_i$. Then, for every two subobjects $(F,\alpha_F) $ and $(F',\alpha_{F'})$ of $\mathscr{F}$ such that $F\subset F'$ (so the diagram $$\xymatrix{ & A_F \ar[dl]_{\alpha_F^{F'}}\ar[d]^{\alpha_F} \\ A_{F'}\ar[r]_{\alpha_{F'}}&  \oplus_{i\in I} A_i \\ }$$ commutes), the universal property of the pullback diagrams guarantees the existence of a family of morphisms $\beta_F^{F'}$ such that the diagram $$\xymatrix{M\cap A_F \ar[rr]^{\gamma_{F}}  \ar@{-->}[dr]^{\beta_F^{F'}} \ar[ddr]_{\eta_{F}} && A_F \ar[ddr]_{\alpha_F}\ar[dr]^{\alpha_F^{F'}} \\ 		& 	M\cap A_{F'} \ar[rr]^{\gamma_{F'}} \ar[d]^{\eta_{F'}} && A_{F'} \ar[d]^{\alpha_{F'}} \\ & M \ar[rr]_f && \oplus_{i\in I} A_i \\ }$$ commutes. Therefore, the family $\{(M\cap A_F)_{F\in \mathscr{F}}, (\beta_F^{F'})_{F\subset F'}\}$ is a direct system of subobjects of $\oplus_{i\in I} A_i$. 

But $M$ is finitely generated so there exists $F_0\in \mathscr{F}$ such that $M=M\cap A_{F_0}$ and then, ${\eta_{F_0}}$ is an isomorphism. Let $j\in I\setminus F_0$. We have $\pi_jf{\eta_{F_0}}=\pi_j\alpha_{F_0}\gamma_{F_0}=0$. Hence $\pi_jf=0$, for every $j\in I\setminus F_0$. Then $\{i\in I/ \pi_i f\neq 0\}\subset F_0$. Therefore the set $\{i\in I/ \pi_i f\neq 0\}$ is finite. Since $g=f\bar{g}$ and $\bar{g}$ epic, the set $\{i\in I/ \pi_i g\neq 0\}$ is also finite.
\end{proof}

\begin{cor} \label{corclequiv}
Suppose that $\mathscr A$ is a locally finitely generated cocomplete abelian category with exact direct limits. Let $\{A_i\}_{i\in I}$ be a family of objects and $f:M\to \oplus_{k\in I}A_k$ be a morphism such  that $\pi_i f \neq 0$ for any $i \in I$. Then  for any $i \in I$, the set $[i]:=\{j\in I/ \pi_i f = \pi_j f \}$ is finite, where $\pi_j:\oplus_{k\in I} A_k\to A_j$ is the canonical projection for every $j\in I$.
\end{cor}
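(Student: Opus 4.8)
The plan is to deduce this directly from Proposition \ref{fg-finite-set} by passing from $M$ to a well-chosen finitely generated subobject. The key observation is that the whole set $[i]$ is controlled by a single morphism: for every $j\in[i]$ one has $\pi_j f=\pi_i f$ by the very definition of $[i]$, so all components indexed by $[i]$ agree with the fixed nonzero morphism $h:=\pi_i f$. Thus it suffices to produce a finitely generated object $X$ together with a morphism $g:X\to\oplus_{k\in I}A_k$ for which $\pi_j g\neq 0$ for all $j\in[i]$, and then apply Proposition \ref{fg-finite-set} to $g$, whose conclusion bounds the relevant index set by a finite one.

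First I would use that $\mathscr A$ is locally finitely generated to write $M$ as the directed union of its finitely generated subobjects $\{X_\lambda\}_\lambda$, with inclusions $\iota_\lambda:X_\lambda\hookrightarrow M$. Since direct limits are exact and $M=\varinjlim_\lambda X_\lambda$, the cocone maps $\iota_\lambda$ are jointly epimorphic, so a morphism out of $M$ vanishes precisely when all its restrictions along the $\iota_\lambda$ vanish. Because $h=\pi_i f\neq 0$ by hypothesis, there must exist an index $\lambda_0$ with $\pi_i f\,\iota_{\lambda_0}\neq 0$. I then set $X:=X_{\lambda_0}$, which is finitely generated, and $g:=f\,\iota_{\lambda_0}:X\to\oplus_{k\in I}A_k$.

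Now for every $j\in[i]$ I compute $\pi_j g=\pi_j f\,\iota_{\lambda_0}=\pi_i f\,\iota_{\lambda_0}\neq 0$, using $\pi_j f=\pi_i f$. Hence $[i]\subseteq\{k\in I:\pi_k g\neq 0\}$. Applying Proposition \ref{fg-finite-set} to the finitely generated object $X$ and the morphism $g$ shows that $\{k\in I:\pi_k g\neq 0\}$ is finite, and therefore so is $[i]$.

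The only delicate step is the extraction of the finitely generated subobject on which $\pi_i f$ is nonzero: this is precisely where local finite generation together with exactness of direct limits is genuinely used, guaranteeing that a nonzero morphism out of $M$ cannot restrict to zero on every finitely generated subobject. Once this reduction is in place, the corollary is an immediate application of Proposition \ref{fg-finite-set}, requiring no further computation.
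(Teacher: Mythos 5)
Your argument is correct and follows essentially the same route as the paper's: both proofs reduce to Proposition \ref{fg-finite-set} by producing a finitely generated object $X$ with a morphism to $M$ along which $\pi_i f$ remains nonzero, and then observing that $[i]$ is contained in the (finite) support of the resulting composite $X\to\oplus_{k\in I}A_k$. The only cosmetic difference is that you extract $X$ as a finitely generated subobject of $M$ (using that $M$ is the directed union of such subobjects), whereas the paper takes an epimorphism $\oplus_{\alpha\in F}X_\alpha\to M$ from a direct sum of finitely generated objects and selects a suitable summand; both devices are available in a locally finitely generated category and play exactly the same role.
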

\begin{proof}
Since $\mathscr A$ is locally finitely generated, there exists an epimorphism $g: \oplus_{\alpha \in F} X_\alpha \to M$ with all $X_\alpha$ finitely generated objects.

Call $k_\alpha: X_\alpha \to \oplus_{i \in F} X_i$ the structural monomorphism for any $\alpha \in F$. We claim that for any $j \in I$ there exists some $\alpha_j \in F$ such that $\pi_j fg k_{\alpha_j} \neq 0$. Indeed, if $\pi_j fg k_{\alpha} = 0$ for every $\alpha \in F$ then $\pi_j fg = 0$, and  since $g$ is epic, $\pi_j f = 0$, a contradiction. Therefore, $[j] \subseteq \{i\in I/ \pi_i fg k_{\alpha_j} \neq 0  \}$, and Proposition \ref{fg-finite-set} says that this set is finite.
\end{proof}

As mentioned before, with Corollary \ref{corclequiv}, the proof of \cite[Proposition 2.9]{RS98} follows in any locally finitely generated Grothendieck category with enough projectives $\A$, and as a consequence, if  $\A$ has a system of projective generators ${\cal G}$, then we see that the class $Proj(\A)$ is locally initially small. Indeed, it is easy to check that  $Proj (\A)=Summ(Sum({\cal G}))$. But $Sum({\cal G})$ is a locally initially  small class, so $Summ(Sum({\cal G}))$ is locally initially small too.

With the use of this fact, we can prove the following.

\begin{prop}\label{prop-L-prec-proj}
 Let $\mathcal{L}$ be a class of objects of $\mathscr A$. Then, the following conditions are equivalent:

\begin{enumerate}
	\item Every object $M$ of $\mathcal{L}$ has a projective preenvelope.
	\item Every object $M$ of $\mathcal{L}$ has a ${{\underline {\mathfrak{Pr}}}_{\mathscr{A}}}^{-1}(\mathcal{L})$-preenvelope.
\end{enumerate}

If, in addition, $\A$ is a locally finitely generated Grothendieck category with a system of projective generators then 1. and 2. above are equivalent to
\begin{itemize}
	\item[3.] ${{\underline {\mathfrak{Pr}}}_{\mathscr{A}}}^{-1}(\mathcal{L})$ is closed under direct products.
\end{itemize}

\end{prop}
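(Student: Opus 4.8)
The plan is to establish the unconditional equivalence $(1)\Leftrightarrow(2)$ first, and then, under the additional hypotheses, to close the loop with $(2)\Leftrightarrow(3)$. Throughout, the engine is Proposition \ref{prop-facto}: since $Proj(\mathscr{A})\subseteq{{\underline {\mathfrak{Pr}}}_{\mathscr{A}}}^{-1}(\mathcal{L})$ and $M\in\mathcal{L}$, it lets me replace ``$N\in{{\underline {\mathfrak{Pr}}}_{\mathscr{A}}}^{-1}(\mathcal{L})$'' by ``every morphism $M\to N$ factors through a projective''. For $(1)\Rightarrow(2)$ I would show that a projective preenvelope $\phi:M\to P$ is already a ${{\underline {\mathfrak{Pr}}}_{\mathscr{A}}}^{-1}(\mathcal{L})$-preenvelope (note $P$ is projective, hence in the domain): given $N\in{{\underline {\mathfrak{Pr}}}_{\mathscr{A}}}^{-1}(\mathcal{L})$ and $g:M\to N$, Proposition \ref{prop-facto} factors $g=ba$ with $a:M\to Q$ and $Q$ projective, the preenvelope property of $\phi$ factors $a=c\phi$, and so $g=(bc)\phi$. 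For $(2)\Rightarrow(1)$ I would take a ${{\underline {\mathfrak{Pr}}}_{\mathscr{A}}}^{-1}(\mathcal{L})$-preenvelope $\psi:M\to N$, pick an epimorphism $\pi:P\to N$ with $P$ projective (enough projectives), and use $N$-subprojectivity of $M$ to lift $\psi$ to $\psi':M\to P$ with $\pi\psi'=\psi$. That $\psi'$ is a projective preenvelope follows because any $g:M\to P'$ with $P'$ projective factors, by the preenvelope property, as $g=t\psi=(t\pi)\psi'$.

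Under the extra hypotheses I would prove $(2)\Rightarrow(3)$ and $(3)\Rightarrow(2)$. For $(2)\Rightarrow(3)$, it suffices by definition of ${{\underline {\mathfrak{Pr}}}_{\mathscr{A}}}^{-1}(\mathcal{L})$ to check, for each $M\in\mathcal{L}$ and each family $\{N_i\}$ in ${{\underline {\mathfrak{Pr}}}_{\mathscr{A}}}^{-1}(\mathcal{L})$, that $\prod_iN_i\in{{\underline {\mathfrak{Pr}}}_{\mathscr{A}}}^{-1}(M)$; equivalently, by Proposition \ref{prop-facto}, that every $g:M\to\prod_iN_i$ factors through a projective. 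Taking a ${{\underline {\mathfrak{Pr}}}_{\mathscr{A}}}^{-1}(\mathcal{L})$-preenvelope $\psi:M\to E$, each component $\pi_ig$ factors through $\psi$, so these assemble to a factorization $g=h\psi$; since $E\in{{\underline {\mathfrak{Pr}}}_{\mathscr{A}}}^{-1}(\mathcal{L})$, Proposition \ref{prop-facto} factors $\psi$ itself through a projective, and composing gives the desired factorization of $g$. (Proposition \ref{clprod} may also be invoked here to pass between the class and its individual objects.)

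The hard part is $(3)\Rightarrow(2)$, where a preenvelope must be built by hand. Since $Proj(\mathscr{A})$ is locally initially small (established above via Corollary \ref{corclequiv}), I would fix for each $M\in\mathcal{L}$ a set $\mathcal{P}_M\subseteq Proj(\mathscr{A})$ through whose direct products every morphism from $M$ to a projective factors, and form the set-indexed product $E=\prod_{(P,f)}P$ over all pairs with $P\in\mathcal{P}_M$ and $f\in\Hom(M,P)$, with canonical map $\lambda:M\to E$ whose $(P,f)$-component is $f$. As a product of projectives, $E$ lies in ${{\underline {\mathfrak{Pr}}}_{\mathscr{A}}}^{-1}(\mathcal{L})$ by hypothesis $(3)$. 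To see $\lambda$ is a ${{\underline {\mathfrak{Pr}}}_{\mathscr{A}}}^{-1}(\mathcal{L})$-preenvelope, I take $N\in{{\underline {\mathfrak{Pr}}}_{\mathscr{A}}}^{-1}(\mathcal{L})$ and $g:M\to N$, factor $g=ba$ through a projective $Q$ (Proposition \ref{prop-facto}), then factor $a=qc$ with $c:M\to\prod_{j\in J}P_j$ and all $P_j\in\mathcal{P}_M$ (local initial smallness). Writing $c_j=\pi_jc$, the projections $\pi_{(P_j,c_j)}:E\to P_j$ assemble into a morphism $\rho:E\to\prod_jP_j$ with $\rho\lambda=c$, so that $g=bq\rho\lambda$ factors through $\lambda$.

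The delicate points to get right in this last step are the set-theoretic legitimacy of the indexing of $E$, which guarantees the product exists in the Grothendieck category, and the bookkeeping identifying each $c_j$ with a genuine coordinate $(P_j,c_j)$ of $\lambda$; once these are secured, the factorization $\rho\lambda=c$ is forced by the universal property of $\prod_jP_j$ and the remaining composition is automatic. I expect this construction, which is where the machinery behind the local initial smallness of $Proj(\mathscr{A})$ is actually consumed, to be the main obstacle, the other three implications being short diagram chases built on Proposition \ref{prop-facto}.
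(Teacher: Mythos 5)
Your proposal is correct and follows essentially the same route as the paper: the unconditional equivalence $(1)\Leftrightarrow(2)$ by transporting preenvelopes along liftings/factorizations through projectives, and $(3)\Rightarrow(2)$ via the same product $\prod_{P\in\mathcal{X}}P^{\Hom(M,P)}$ built from the local initial smallness of $Proj(\mathscr{A})$. The only differences are cosmetic — you phrase the diagram chases through Proposition \ref{prop-facto} rather than surjectivity of $\Hom$ maps, and you write out the $(2)\Rightarrow(3)$ argument that the paper delegates to \cite[Theorem 3.1]{Rada-Parra}.
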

\begin{proof}
$(1)\Rightarrow (2)$. Let $M\to Q$ be a projective preenvelope of $M$. Let us prove that $M\to Q$ is a ${{\underline {\mathfrak{Pr}}}_{\mathscr{A}}}^{-1}(\mathcal{L})$-preenvelope. For let $N \in {{\underline {\mathfrak{Pr}}}_{\mathscr{A}}}^{-1}(\mathcal{L})$ and let $P \to N$ be an epimorphism with $P$ projective. Apply the functors $\Hom(M,-)$ and $\Hom(Q,-)$ to $P\to N$ to get the following commutative diagram with exact rows $$\xymatrix{\Hom(Q,P)\ar[r]\ar[d]& \Hom(Q,N)\ar[d] \ar[r] & 0 \\ \Hom(M,P) \ar[r] & \Hom(M,N) \ar[r] & 0}$$ with $\Hom(Q,P)\to \Hom(M,P)$ an epimorphism ($M\to Q$ is a projective preenvelope). Therefore, $\Hom(Q,N)\to \Hom(M,N)$ is also an epimorphism.
	
$(2)\Rightarrow (1)$. Let $f: M\to N$ be a ${{\underline {\mathfrak{Pr}}}_{\mathscr{A}}}^{-1}(\mathcal{L})$-preenvelope of $M$ and $g:P \to N$ be an epimorphism with $P$ projective. Since $N \in {{\underline {\mathfrak{Pr}}}_{\mathscr{A}}}^{-1}(\mathcal{L})$ there exists a morphism $h:M\to P$ such that $f=gh$. Let us prove that $h:M\to P$ is a projective preenvelope. 

For let $h': M\to P'$ be a morphism with $P'$ projective. Since  $f:M\to N$ is a ${{\underline {\mathfrak{Pr}}}_{\mathscr{A}}}^{-1}(\mathcal{L})$-preenvelope there exists a morphism $g': N \to P'$ such that $h'= g' f$. Hence $h'= g' g h $ and $g'g:P\to P'$ is the morphism we were looking for.

$(2)\Rightarrow (3)$. Use the same arguments of \cite[Theorem 3.1]{Rada-Parra}.

$(3)\Rightarrow (2)$. Let $M \in \mathcal{L}$. Given $N \in {{\underline {\mathfrak{Pr}}}_{\mathscr{A}}}^{-1}(\mathcal{L})$ we fix an epimorphism $P \to N$ from a projective $P$. Since the class of projective objects is locally initially small, there exists a set $\mathcal X$ of projective objects such that any morphism $M \to P$ factors through a product of objects in the set $\mathcal X$. But every morphism $M\to N$ factors through $P$, and such factorization $M\to P$ factors through a product of objects in the set $\mathcal X$, so we have just seen that every morphism $M \to N$ with $N \in {{\underline {\mathfrak{Pr}}}_{\mathscr{A}}}^{-1}(\mathcal{L})$ factors through a product of elements of $\mathcal{X}$. 

Call now $K= \prod_{P \in \mathcal{X}} P ^{\Hom(M,P)}$. Since ${{\underline {\mathfrak{Pr}}}_{\mathscr{A}}}^{-1}(\mathcal{L})$ is supposed to be closed under direct products, we see that $K \in {{\underline {\mathfrak{Pr}}}_{\mathscr{A}}}^{-1}(\mathcal{L})$. 

Now, for each $P \in \mathcal{X}$ there exists a canonical morphism $\lambda_P: M \to P ^{\Hom(M,P)}$, so there is a unique $\lambda: M\to K$ such that $\pi_P \lambda= \lambda_P$ for every $P\in \mathcal{X}$, where  $\pi_P$ are the canonical projections. We claim that $\lambda: M\to K$ is a ${{\underline {\mathfrak{Pr}}}_{\mathscr{A}}}^{-1}(\mathcal{L})$-preenvelope of $M$. 

To show this, take any morphism  $f:M \to N$ with $N \in {{\underline {\mathfrak{Pr}}}_{\mathscr{A}}}^{-1}(\mathcal{L})$, so there exist $h: M \to \prod_{X \in \mathcal{X}}X$ and $g: \prod_{X \in \mathcal{X}}X \to N$ such that $f=gh$. Consider the projections $\pi_X : K \to X^{\Hom(M,X)}$ and $\pi_{p_X  h} : X^{Hom(M,X)} \to X$ (the projection to the component $p_X  h$ where $p_X : \prod_{X \in \mathcal{X}}X \to X$ the canonical projection). By the universal property of the direct product there exists a unique morphism $\gamma: K\to \prod_{X \in \mathcal{X}}X $ such that $p_X \gamma=  \pi_{p_X h} \pi_X$. Therefore, $p_X \gamma \lambda= \pi_{p_X  h} \pi_X \lambda= \pi_{p_X h} \lambda_X= p_X h$ for all $X\in \mathcal{X}$, so $\gamma \lambda=h$ and hence $g \gamma \lambda=gh=f$. We then get that $\lambda: M\to K$ is a ${{\underline {\mathfrak{Pr}}}_{\mathscr{A}}}^{-1}(\mathcal{L})$-preenvelope of $M$.
\end{proof} 

It is a natural question at this point to ask about the closure of subprojectivity domains under arbitrary direct sums. There is not a clear answer to this. In \cite[Proposition 2.13]{Sergio} it is shown that the subprojectivity domain of any finitely generated module is closed under arbitrary direct sums. Now, we will see that the class for which subprojectivity domains are closed under direct sums is larger than that of finitely generated modules, since it contains that of small modules. Whether or not this is the largest class with this property we don't know, but it would be of a great interest to know to what point this class can be enlarged.

So suppose that $\mathscr{A}$ is an abelian category with direct sums and let $M$ be an object in $\mathscr{A}$, $\{N_i\}_{i\in I}$ be a family of objects in ${{\underline {\mathfrak{Pr}}}_{\mathscr{A}}}^{-1}(M)$ and consider a family of epimorphisms $P_i\to N_i$, where each $P_i$ is projective. We have the following commutative diagram $$\xymatrix{0\ar[r ]& \oplus_{i\in I}\Hom(M,P_i) \ar[r]^{\phi_P}\ar[d]_\alpha&\Hom (M,\oplus_{i\in I}P_i))\ar[r]\ar[d]_\beta & \Coker{\phi_P}\ar[r]\ar[d]_\gamma & 0 \\ 0\ar[r]& \oplus_{i\in I}\Hom(M,N_i) \ar[r]_{\phi_N}&\Hom (M,\oplus_{i\in I}N_i))\ar[r]&\Coker{\phi_N}\ar[r]&0}$$

Clearly, $\beta$ is epic if and only if $\gamma$ is epic since $\alpha$ is an epimorphism. Consequently,  if $M$ is a small object, that is, $\Hom(M,-)$ preserves direct sums, then the subprojectivity domain of $M$ is closed under direct sums.


\section{Subprojectivity and the Ext-orthogonal classes} \label{sec4}

Throughout this section we will suppose $\mathcal{A}$ to have enough injectives.

The aim of this section is to establish the relation between the subprojectivity domains and the Ext-orthogonal classes. The idea behind this result is inspired by the following discussion. Fix a class $\mathcal{L}$ and consider a short exact sequence $0 \to K \to P \to N\to 0$ with $P$ is projective. For every $M \in \mathcal{L}$, we have the exact sequence $$\cdots \to \Hom(M,P) \to \Hom(M,N) \to  \Ext^{1}(M,K) \to  \Ext^{1}(M,P) \to  \cdots.$$ So if we assume that $\mathcal{\mathcal{L}^\perp}$ contains all projective objects, we get the following equivalence: $N \in {{\underline {\mathfrak{Pr}}}_{\mathscr{A}}}^{-1}(\mathcal{L})$ if and only if $K \in \mathcal{\mathcal{L}^\perp}$. However, it does not seem clear how to get new results if  we relate the subprojectivity domains with a property on   kernels of epimorphisms. But, if we suppose moreover that  $\mathcal{L}$ contains all projective objects and that it is closed under kernels of epimorphisms, then $\mathcal{\mathcal{L}^\perp}$ will be closed under cokernels of monomorphisms (see  \cite[Lemma 1.2.8]{Ramon}). So by the above equivalence we get the following implication: if $N \in {{\underline {\mathfrak{Pr}}}_{\mathscr{A}}}^{-1}(\mathcal{L})$ then $N \in \mathcal{\mathcal{L}^\perp}$, that is, ${{\underline {\mathfrak{Pr}}}_{\mathscr{A}}}^{-1}(\mathcal{L}) \subseteq  \mathcal{\mathcal{L}^\perp}$. In the following result we provide a necessary and sufficient condition to have the equality ${{\underline {\mathfrak{Pr}}}_{\mathscr{A}}}^{-1}(\mathcal{L}) = \mathcal{\mathcal{L}^\perp}$. 

\begin{thm}\label{thm-main4}
Let $\mathcal{L}$ be a class of objects of $\mathscr A$ which is closed under kernels of epimorphisms and which contains the class  $Proj(\mathscr{A})$. Then, the following conditions are equivalent:
\begin{enumerate}
\item $ \mathcal{\mathcal{L}^\perp} ={{\underline {\mathfrak{Pr}}}_{\mathscr{A}}}^{-1}(\mathcal{L})$.
\item $ \mathcal{L} \bigcap \mathcal{\mathcal{L}^\perp} = Proj(\mathscr A)$ and every object in $\mathcal{L}^\perp$ has a special $\mathcal{L}$-precover.
\end{enumerate}

If, in addition, $\A$ is a locally finitely generated Grothendieck category with a system of projective generators then 1. and 2. above are equivalent to
\begin{itemize}
\item[3.] $Proj(\mathcal{A}) \subseteq \mathcal{L}^\perp$, ${{\underline {\mathfrak{Pr}}}_{\mathcal{A}}}^{-1}(\mathcal{L})$ is closed under cokernels of monomorphisms  and every $M \in \mathcal{L}$ has an $\mathcal{L}^\perp$-preenvelope which is projective.
\end{itemize} 
\end{thm}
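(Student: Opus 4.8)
The plan is to prove the two equivalences $(1)\Leftrightarrow(2)$ and $(1)\Leftrightarrow(3)$ separately, both built on a single preliminary computation. First I would isolate the following ``engine'': assuming $Proj(\mathscr{A})\subseteq\mathcal{L}^\perp$, take any short exact sequence $0\to K\to P\to N\to 0$ with $P$ projective. Applying $\Hom(M,-)$ for $M\in\mathcal{L}$ and using $\Ext^1(M,P)=0$ (since $P\in Proj(\mathscr{A})\subseteq\mathcal{L}^\perp$), the tail $\Hom(M,P)\to\Hom(M,N)\to\Ext^1(M,K)\to 0$ shows $\Ext^1(M,K)=0$ exactly when $\Hom(M,P)\to\Hom(M,N)$ is epic. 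By the single-projective form of Proposition \ref{Lemma-subproj} (taking $\mathcal{X}=Proj(\mathscr{A})$) this holds for every $M\in\mathcal{L}$ iff $N\in{{\underline {\mathfrak{Pr}}}_{\mathscr{A}}}^{-1}(\mathcal{L})$; hence $N\in{{\underline {\mathfrak{Pr}}}_{\mathscr{A}}}^{-1}(\mathcal{L})$ iff $K\in\mathcal{L}^\perp$. I would also record that $\mathcal{L}^\perp$ is automatically closed under extensions and under direct products, and that, since $\mathcal{L}$ contains $Proj(\mathscr{A})$ and is closed under kernels of epimorphisms, a dimension-shift argument (as in \cite[Lemma 1.2.8]{Ramon}) makes $\mathcal{L}^\perp$ closed under cokernels of monomorphisms. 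Feeding $K,P\in\mathcal{L}^\perp$ into this last closure gives ${{\underline {\mathfrak{Pr}}}_{\mathscr{A}}}^{-1}(\mathcal{L})\subseteq\mathcal{L}^\perp$. As each of $(1),(2),(3)$ forces $Proj(\mathscr{A})\subseteq\mathcal{L}^\perp$, this inclusion is available throughout, so in every implication only the reverse inclusion $\mathcal{L}^\perp\subseteq{{\underline {\mathfrak{Pr}}}_{\mathscr{A}}}^{-1}(\mathcal{L})$ will need attention.

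For $(1)\Rightarrow(2)$ I would argue that any $X\in\mathcal{L}\cap\mathcal{L}^\perp=\mathcal{L}\cap{{\underline {\mathfrak{Pr}}}_{\mathscr{A}}}^{-1}(\mathcal{L})$ is $X$-subprojective, hence projective by Proposition \ref{prop-whole-cat-obj}, giving $\mathcal{L}\cap\mathcal{L}^\perp=Proj(\mathscr{A})$; and for $N\in\mathcal{L}^\perp={{\underline {\mathfrak{Pr}}}_{\mathscr{A}}}^{-1}(\mathcal{L})$ a sequence $0\to K\to P\to N\to 0$ with $P$ projective has $K\in\mathcal{L}^\perp$ by the engine, so (as $P\in Proj(\mathscr{A})\subseteq\mathcal{L}$) the map $P\to N$ is an epimorphism with kernel in $\mathcal{L}^\perp$ for which each $\Hom(L',P)\to\Hom(L',N)$ ($L'\in\mathcal{L}$) is epic, i.e. a special $\mathcal{L}$-precover. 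For $(2)\Rightarrow(1)$ the key move is: given $N\in\mathcal{L}^\perp$, choose a special $\mathcal{L}$-precover $0\to K'\to L\to N\to 0$; since $K',N\in\mathcal{L}^\perp$ and $\mathcal{L}^\perp$ is closed under extensions, $L\in\mathcal{L}\cap\mathcal{L}^\perp=Proj(\mathscr{A})$, so $L$ is projective, and the precover property together with Proposition \ref{Lemma-subproj} yields $N\in{{\underline {\mathfrak{Pr}}}_{\mathscr{A}}}^{-1}(\mathcal{L})$.

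For $(1)\Leftrightarrow(3)$ under the additional hypotheses I would lean on Proposition \ref{prop-L-prec-proj}. For $(1)\Rightarrow(3)$: $Proj(\mathscr{A})\subseteq\mathcal{L}^\perp$ and the closure of ${{\underline {\mathfrak{Pr}}}_{\mathscr{A}}}^{-1}(\mathcal{L})=\mathcal{L}^\perp$ under cokernels of monomorphisms are immediate from the preliminary step; and because $\mathcal{L}^\perp={{\underline {\mathfrak{Pr}}}_{\mathscr{A}}}^{-1}(\mathcal{L})$ is closed under products, Proposition \ref{prop-L-prec-proj} equips each $M\in\mathcal{L}$ with a projective preenvelope, which by the proof of that proposition is a ${{\underline {\mathfrak{Pr}}}_{\mathscr{A}}}^{-1}(\mathcal{L})=\mathcal{L}^\perp$-preenvelope. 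For $(3)\Rightarrow(1)$: given $N\in\mathcal{L}^\perp$ and $M\in\mathcal{L}$, any $f:M\to N$ factors as $f=g\iota$ through the projective $\mathcal{L}^\perp$-preenvelope $\iota:M\to Q$ (possible since $N\in\mathcal{L}^\perp$), so $f$ factors through a projective object and $M$ is $N$-subprojective by Proposition \ref{prop-facto}; as $M$ was arbitrary, $N\in{{\underline {\mathfrak{Pr}}}_{\mathscr{A}}}^{-1}(\mathcal{L})$.

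I expect the main obstacle to be the preliminary ``engine'' together with the forced-projectivity observation in $(2)\Rightarrow(1)$: the whole argument hinges on converting the abstract datum of a special $\mathcal{L}$-precover into a genuinely \emph{projective} precover, which is precisely what lets subprojectivity (a lifting-through-projectives condition, Propositions \ref{prop-facto} and \ref{Lemma-subproj}) interface with the orthogonal class $\mathcal{L}^\perp$. Once that bridge is in place, the remaining steps are bookkeeping with long exact $\Ext$-sequences and invocations of the cited closure and preenvelope results.
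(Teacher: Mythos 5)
Your proposal is correct, and for the equivalence $(1)\Leftrightarrow(2)$ it follows the paper's proof almost verbatim (forced projectivity of $\mathcal{L}\cap\mathcal{L}^\perp$ via Proposition \ref{prop-whole-cat-obj}, projective epimorphisms as special $\mathcal{L}$-precovers, extension-closure of $\mathcal{L}^\perp$ to make the middle term of a special precover projective). Where you genuinely diverge is in how the inclusion ${{\underline {\mathfrak{Pr}}}_{\mathscr{A}}}^{-1}(\mathcal{L})\subseteq\mathcal{L}^\perp$ is handled: you run a single ``engine'' (which is in fact exactly the informal discussion the paper places just before the theorem) plus the closure of $\mathcal{L}^\perp$ under cokernels of monomorphisms, and reuse it uniformly in $(2)\Rightarrow(1)$ and $(3)\Rightarrow(1)$. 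The paper instead gives two separate in-line arguments: in $(2)\Rightarrow(1)$ a direct dimension shift showing $\Ext^2(L,K)=0$ via a projective presentation of $L$, and in $(3)\Rightarrow(1)$ a longer argument that factors any $f:M\to N$ through the projective $\mathcal{L}^\perp$-preenvelope, then kills $\Ext^1(C,N)$ and $\Ext^2(C,N)$ for the cokernel $C$ of the preenvelope using enough injectives and the hypothesis that ${{\underline {\mathfrak{Pr}}}_{\mathscr{A}}}^{-1}(\mathcal{L})$ is closed under cokernels of monomorphisms. The trade-off is real: your route is shorter and never needs enough injectives, but it leans on the standing hypothesis that $\mathcal{L}$ is closed under kernels of epimorphisms (to get cokernel-closure of $\mathcal{L}^\perp$), and it never uses the cokernel-closure clause of condition $(3)$ in the backward direction. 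The paper's more laborious $(3)\Rightarrow(1)$ is engineered precisely to avoid the kernel-closure of $\mathcal{L}$, which is what licenses its closing remark that $(1)\Leftrightarrow(3)$ holds even for classes like $\mathcal{SGP}$ that are not closed under kernels of epimorphisms; your proof, while a valid proof of the theorem as stated, forfeits that refinement.
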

\begin{proof}
$1.\Rightarrow 2.$ Let us prove first that $\mathcal{L}\bigcap \mathcal{\mathcal{L}^\perp} \subseteq  Proj(\mathscr A).$ If $M\in \mathcal{L}\bigcap \mathcal{\mathcal{L}^\perp}$ then $M \in {{\underline {\mathfrak{Pr}}}_{\mathscr{A}}}^{-1}(\mathcal{L})$ by condition 1., and then $M \in {{\underline {\mathfrak{Pr}}}_{\mathscr{A}}}^{-1}(M)$. Hence, $M$ is projective by Proposition \ref{prop-whole-cat-obj}.
 
Conversely, any projective $P$ holds in $\mathcal{L}$ by the hypotheses, and of course $P \in {{\underline {\mathfrak{Pr}}}_{\mathscr{A}}}^{-1}(\mathcal{L})$. But ${{\underline {\mathfrak{Pr}}}_{\mathscr{A}}}^{-1}(\mathcal{L})= \mathcal{\mathcal{L}^\perp}$, so indeed $P\in \mathcal{L} \bigcap \mathcal{\mathcal{L}^\perp}$. 

To prove the second assertion let $N \in \mathcal{L}^\perp$ and let us show that any epimorphism $g:P \to N$ with $P$ projective is indeed a special $\mathcal{L}$-precover. 

That $g$ is an $\mathcal{L}$-precover is clear since, by assumption, $P \in \mathcal{L}$ and $N\in{{\underline {\mathfrak{Pr}}}_{\mathscr{A}}}^{-1}(\mathcal{L})$. 

Now,  for an object $L\in \mathcal{L}$, being $\Hom_{\A}(L,P)\to \Hom_{\A} (L,N)\to 0$ exact  and being $\Ext^1_{\A} (L,P)=0$ implies that $\Ext^1_{\A} (L,\ker g)=0$, that is, $\ker g\in \mathcal{L}^\perp$.

$2.\Rightarrow 1.$ Let $N\in \mathcal{L}^\perp$, $M\in \mathcal{L}$ and consider a special $\mathcal{L}$-precover $$0\to K\to L\to N\to 0$$ of $N$.

Since $N$ and $K$ are in $\mathcal{L}^\perp$, $L$ do too and then we get $L \in \mathcal{L}\bigcap \mathcal{\mathcal{L}^\perp}$. But $\mathcal{L} \bigcap \mathcal{\mathcal{L}^\perp}=Proj(\mathscr A)$ so by  Proposition \ref{Lemma-subproj}, $N \in {{\underline {\mathfrak{Pr}}}_{\mathscr{A}}}^{-1}(M)$.
	
Conversely, let $N\in {{\underline {\mathfrak{Pr}}}_{\mathscr{A}}}^{-1}(\mathcal{L})$ and let $$0\to K \to P \to N \to 0$$ be a short exact sequence with $P$ projective. For any $L \in \mathcal{L}$ the associated long exact sequence looks like $$\cdots \to \Ext^1(L,P) \to \Ext^1(L,N) \to \Ext^2(L,K) \to \cdots.$$ But $\Ext^1(L,P)=0$ since $P$ is projective (so $P\in \mathcal{L}^{\perp}$ by the hypothesis), so proving that $\Ext^2(L,K)=0$ will give $N\in \mathcal{L}^{\perp}$.

Let $0\to C \to Q \to L \to 0$ be a short exact sequence with $Q$ projective. Since $\mathcal{L}$ is closed under kernels of epimorphisms, $Q,\ L\in \mathcal{L}$ implies $C\in \mathcal{L}$. Now, in the long exact sequence $$\cdots \to \Ext^1(C,K) \to \Ext^2(L,K) \to \Ext^2(Q,K) \to \cdots$$ we have $\Ext^1(C,K)=0$. Indeed, in the long exact sequence 
$$\Hom(C,P) \to \Hom(C,N) \to \Ext^1(C,K)  \to \Ext^1(C,P)   \to \cdots$$
$\Ext^1(C,P)=0$ (since  $P\in \mathcal{L}^{\perp}$ by the hypothesis) and the first morphism is epic (since $N\in {{\underline {\mathfrak{Pr}}}_{\mathscr{A}}}^{-1}(\mathcal{L})$ and  $C\in \mathcal{L}$).   On the other hand, $\Ext^2(Q,K)=0$, so indeed $\Ext^2(L,K)=0$.

$1.\Rightarrow 3.$ Clearly $Proj(\mathcal{A}) \subseteq {{\underline {\mathfrak{Pr}}}_{\mathcal{A}}}^{-1}(\mathcal{L}) =\mathcal{L}^\perp$. 

Now, let $$0\to A\to B\to C\to 0$$ be exact with $A,\ B\in {{\underline {\mathfrak{Pr}}}_{\mathcal{A}}}^{-1}(\mathcal{L})$. To prove that $C\in {{\underline {\mathfrak{Pr}}}_{\mathcal{A}}}^{-1}(\mathcal{L}) $ choose any $L\in \mathcal{L}$ and apply the functor $\Hom_{\A}(L,-)$ to the exact sequence. We get a long exact sequence $$0 \to \Hom (L,A) \to \Hom (L,B) \to \Hom (L,C) \to \Ext^1(L,A)$$ with $\Ext^1(L,A)=0$ since $L\in\mathcal{L}$ and $A\in {{\underline {\mathfrak{Pr}}}_{\mathcal{A}}}^{-1}(\mathcal{L})=\mathcal{L}^{\perp}$. Then Proposition \ref{Lemma-subproj} immediately gives that $C\in {{\underline {\mathfrak{Pr}}}_{\mathcal{A}}}^{-1}(\mathcal{L})$.

Finally, if $M \in \mathcal{L}$ let $f: M \to N$ be a ${{\underline {\mathfrak{Pr}}}_{\mathcal{A}}}^{-1}(\mathcal{L})$-preenvelope, which exists by Proposition \ref{prop-L-prec-proj} since $\mathcal{L}^\perp$ is always closed under direct products.

Now, find an epimorphism $g:P \to N$ from a projective $P$. Since $N \in {{\underline {\mathfrak{Pr}}}_{\mathcal{A}}}^{-1}(\mathcal{L})$ there exists a morphism $h: M \to P$ such that $f=gh$. We claim that $h: M \to P$ is a ${{\underline {\mathfrak{Pr}}}_{\mathcal{A}}}^{-1}(\mathcal{L})$-preenvelope. Indeed, let $k: M \to N'$ be a morphism with $N' \in {{\underline {\mathfrak{Pr}}}_{\mathcal{A}}}^{-1}(\mathcal{L})$. Since $f: M \to N$ is a  ${{\underline {\mathfrak{Pr}}}_{\mathcal{A}}}^{-1}(\mathcal{L})$-preenvelope,  there exists $l: N \to N'$ such that $k=lf$, hence $k=lgh$. Therefore, $h: M \to P$ is a ${{\underline {\mathfrak{Pr}}}_{\mathcal{A}}}^{-1}(\mathcal{L})$-preenvelope.

$3.\Rightarrow 1.$ Let $N \in \mathcal{L}^\perp$, choose any $M \in \mathcal{L}$, any morphism $f: M \to N$ and a $\mathcal{L}^\perp$-preenvelope $g:M \to Q$ of $M$ with $Q$ projective. Then there exists $h:Q \to N$ such that $f=hg$, so $N \in {{\underline {\mathfrak{Pr}}}_{\mathcal{A}}}^{-1}(\mathcal{L})$ by Proposition \ref{prop-facto}. 

Conversely, let  $N \in {{\underline {\mathfrak{Pr}}}_{\mathcal{A}}}^{-1}(\mathcal{L})$, choose any $M \in \mathcal{L}$ and take its $\mathcal{L}^\perp$-preenvelope $g:M \to Q$, where $Q$ is projective. Of course every $\mathcal{L}^\perp$-preenvelope is injective since $\mathcal{L}^\perp$ contains the class of injectives, so if $C$ is the cokernel of $g$ we get a long exact sequence $$\xymatrix{\cdots\ar[r]& \Ext^1(Q,N) \ar[r]&  \Ext^1(M,N) \ar[r]&  \Ext^2(C,N) \ar[r]& \cdots}$$

Since $Q$ is projective, showing that $\Ext^2(C,N)=0$ would immediately give $\Ext^1(M,N)=0$, so let's prove that $\Ext^i(C,N)=0,\ i=1,2$.

Choose then any morphism $f:M \to N$ and find an epimorphism $h:P\to N$ from a projective $P$. Then, by the $N$-subprojectivity of $M$, then diagram $$\xymatrix{ & M \ar[d]^f \ar@{-->}[dl]_k \ar[r]^g & Q \\ P \ar[r]_h & N \ar[r] & 0}$$ can be completed commutatively by $k$. But $P$ is projective (so it holds in $\mathcal{L}^{\perp}$ by the hypotheses), and $g$ is a $\mathcal{L}^\perp$-preenvelope, so there is a morphism $l: Q \to P$ such that $lg=k$. Therefore, $f=hk=hlg$ and then $\Hom(g,N)$ is an epimorphism, so from the long exact sequence $$\xymatrix{\cdots\ar[r]& \Hom(Q,N) \ar[r]& \Hom(M,N) \ar[r]&  \Ext^1(C,N) \ar[r]&  \Ext^1(Q,N)=0 \ar[r]&\cdots}$$ we see that $ \Ext^1(C,N)=0$. 

Now, if $0 \to N \to E \to D \to 0$ is exact and $E$ is injective, we get an associated long exact sequence $$\xymatrix{\cdots\ar[r]& \Ext^1(C,D) \ar[r]&  \Ext^2(C,N) \ar[r]&  \Ext^2(C,E)=0 \ar[r]&\cdots}$$ But we have proved already that $\mathcal{L}^\perp \subseteq {{\underline {\mathfrak{Pr}}}_{\mathcal{A}}}^{-1}(\mathcal{L})$, so $E \in {{\underline {\mathfrak{Pr}}}_{\mathcal{A}}}^{-1}(\mathcal{L})$, and $N$ does too, so since ${{\underline {\mathfrak{Pr}}}_{\mathcal{A}}}^{-1}(\mathcal{L})$ is closed under cokernels of monomorphisms we get that  $D$ is also in ${{\underline {\mathfrak{Pr}}}_{\mathcal{A}}}^{-1}(\mathcal{L})$. Hence, by the same arguments as before, $\Ext^1(C,D)=0$ and then $\Ext^2(C,N)=0$.
\end{proof}

Theorem \ref{thm-main4} can be used to characterize the subprojectivity domain of the class of exact complexes  $\mathcal{E}$. We recall that every projective complex is exact, that the class $\mathcal{E}$ is closed under kernels of epimorphisms and that it is special precovering in the whole category of complexes (see \cite[Theorem 2.3.17]{Ramon}). It is also known that $\mathcal{E} \bigcap \mathcal{E}^\perp$ is the class of injective complexes (see \cite[Proposition 2.3.7]{Ramon}). So, by Theorem \ref{thm-main4}, we get the following result. 

\begin{cor}\label{cor-R-quasifrob}
The subprojectivity domain of the class of exact complexes of modules is $\mathcal{E}^\perp$ if and only if  $R$ is quasi-Fr\"obenius.
\end{cor}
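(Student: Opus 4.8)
The statement to prove is Corollary \ref{cor-R-quasifrob}: the subprojectivity domain of the class $\mathcal{E}$ of exact complexes equals $\mathcal{E}^\perp$ if and only if $R$ is quasi-Fr\"obenius (QF). The natural strategy is to apply Theorem \ref{thm-main4} with $\mathcal{L}=\mathcal{E}$ in the category $C(R)$, so that the equality ${{\underline {\mathfrak{Pr}}}_{C(R)}}^{-1}(\mathcal{E})=\mathcal{E}^\perp$ becomes condition (1) of that theorem. First I would check that $\mathcal{E}$ satisfies the hypotheses of the theorem: it contains $Proj(C(R))$ (every projective complex is exact) and it is closed under kernels of epimorphisms, both of which are recalled in the paragraph preceding the corollary. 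By Theorem \ref{thm-main4}, condition (1) is then equivalent to condition (2), namely that $\mathcal{E}\cap\mathcal{E}^\perp=Proj(C(R))$ together with the existence of special $\mathcal{E}$-precovers for every object of $\mathcal{E}^\perp$. The precovering half is automatic, since $\mathcal{E}$ is special precovering in $C(R)$ (again recalled just above the corollary).

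\emph{The core reduction.} What remains, therefore, is to establish that the equality ${{\underline {\mathfrak{Pr}}}_{C(R)}}^{-1}(\mathcal{E})=\mathcal{E}^\perp$ is equivalent, via condition (2), to $\mathcal{E}\cap\mathcal{E}^\perp=Proj(C(R))$, and then to translate this last equality into the ring-theoretic condition ``$R$ is QF''. Here I would use the stated fact that $\mathcal{E}\cap\mathcal{E}^\perp$ is exactly the class of injective complexes. Thus condition (2) reduces to the requirement that the injective complexes coincide with the projective complexes, i.e. $Inj(C(R))=Proj(C(R))$. So the whole corollary boils down to proving the equivalence
$$
Inj(C(R))=Proj(C(R)) \iff R \text{ is quasi-Fr\"obenius.}
$$

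\emph{The ring-theoretic equivalence.} For this final step I would argue on both sides. One knows that a complex is projective if and only if it is exact with projective cycles (equivalently, a contractible complex of projectives), and dually a complex is injective if and only if it is exact with injective cycles. Hence $Inj(C(R))=Proj(C(R))$ forces, degreewise on cycles, the equality $Inj(R\text{-Mod})=Proj(R\text{-Mod})$, which is precisely the classical characterization of QF rings. Conversely, if $R$ is QF then projective and injective modules coincide, and one lifts this to complexes: an exact complex with projective (hence injective) cycles is simultaneously a projective and an injective complex, giving $Proj(C(R))=Inj(C(R))$. I expect the main obstacle to lie in this last translation, specifically in justifying carefully that the coincidence of injective and projective \emph{complexes} is equivalent to the coincidence of injective and projective \emph{modules}; this rests on the structural descriptions of projective and injective objects in $C(R)$ and on the fact that over a QF ring these module classes agree, so one must make sure the degreewise/cycle-wise bookkeeping is sound rather than merely plausible.
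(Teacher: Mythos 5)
Your proposal is correct and follows essentially the same route as the paper: the authors likewise invoke Theorem \ref{thm-main4} for $\mathcal{L}=\mathcal{E}$ after recalling that $\mathcal{E}$ contains the projective complexes, is closed under kernels of epimorphisms and is special precovering, and that $\mathcal{E}\cap\mathcal{E}^\perp$ is the class of injective complexes, so that condition (2) reduces to $Inj(C(R))=Proj(C(R))$, i.e.\ to $R$ being quasi-Fr\"obenius. The only difference is that you spell out the final cycle-wise translation between injective/projective complexes and injective/projective modules, which the paper leaves implicit; your bookkeeping there is sound.
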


The question of whether or not any object has a special $\mathcal{GP}$-precover has been a subject of many papers. Here, as a consequence of Corollary \ref{prop-subproj-goren-cl} and Theorem \ref{thm-main4} (since it is known that the class $\mathcal{GP}$ is closed under kernels of epimorphisms) we immediately get a partial answer which has been recently known following different methods (see \cite[Proposition 4.1]{ZH19}).

\begin{cor}\label{cor-specialGP-precover}
If direct sums exist and they are exact then, every object in $\mathcal{GP}^\perp$ has a special $\mathcal{GP}$-precover.
\end{cor}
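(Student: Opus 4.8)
The plan is to read this off directly from Theorem \ref{thm-main4} applied to the class $\mathcal{L}=\mathcal{GP}$. First I would check that $\mathcal{GP}$ meets the two standing hypotheses of that theorem. Closure of $\mathcal{GP}$ under kernels of epimorphisms is already recorded as known (and is used in the sentence preceding the statement), and the containment $Proj(\mathscr{A})\subseteq\mathcal{GP}$ is immediate: any projective object $P$ sits inside the totally acyclic complex $\cdots\to 0\to P\xrightarrow{\mathrm{id}} P\to 0\to\cdots$, so it is Gorenstein projective. Thus $\mathcal{GP}$ is a legitimate input for Theorem \ref{thm-main4}, whose ambient hypothesis (enough injectives, the standing assumption of Section \ref{sec4}) is in force here as well.

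Next I would feed in the hypothesis of the present corollary, namely that direct sums exist and are exact. Under exactly this assumption, Corollary \ref{prop-subproj-goren-cl} identifies the subprojectivity domain of $\mathcal{GP}$ with its right Ext-orthogonal class, that is,
$$
{{\underline {\mathfrak{Pr}}}_{\mathscr{A}}}^{-1}(\mathcal{GP})=\mathcal{GP}^\perp .
$$
But this equality is precisely condition $1.$ of Theorem \ref{thm-main4} for the class $\mathcal{L}=\mathcal{GP}$.

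Finally, since condition $1.$ holds, the equivalence in Theorem \ref{thm-main4} hands us condition $2.$ for free, namely that $\mathcal{GP}\cap\mathcal{GP}^\perp=Proj(\mathscr{A})$ and that every object of $\mathcal{GP}^\perp$ admits a special $\mathcal{GP}$-precover. The second clause is exactly the assertion to be proved, so the argument is complete. I do not expect any real obstacle here: the only things to be careful about are the two routine verifications that $\mathcal{GP}$ is closed under kernels of epimorphisms and contains the projectives, and the bookkeeping remark that the exactness-of-direct-sums hypothesis is used solely to invoke Corollary \ref{prop-subproj-goren-cl}. Once condition $1.$ is in hand, everything else is a direct citation of the main theorem, which is why this collapses to a short corollary; the content genuinely lives in Corollary \ref{prop-subproj-goren-cl} and Theorem \ref{thm-main4}.
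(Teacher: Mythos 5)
Your argument is exactly the paper's: verify that $\mathcal{GP}$ is closed under kernels of epimorphisms and contains the projectives, invoke Corollary \ref{prop-subproj-goren-cl} to obtain condition $1.$ of Theorem \ref{thm-main4} for $\mathcal{L}=\mathcal{GP}$, and read off the special-precover statement from condition $2.$ The proposal is correct and matches the paper's proof in every essential respect.
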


We end this paper with the following remark. One can see that the proof of the equivalence $(1) \Leftrightarrow (3)$ in Theorem \ref{thm-main4} does not need the class $\mathcal{L}$ to be closed under kernels of epimorphisms since we can find an example of such a situation: we know that the class of strongly Gorenstein projective objects is not closed under kernels of epimorphisms and by Proposition \ref{prop-subproj-sgoren-cl}, we have that the subprojectivity domain of $\mathcal{SGP}$ is closed under cokernels of monomorphisms. \bigskip

\noindent\textbf{Acknowledgment.}  The authors want to express their gratitude to the referee for the very helpful comments and suggestions. Part of this work was done while Amzil and Ouberka were visiting the University of Almer\'{i}a under the Erasmus+ KA107 scholarship grant from November 2019 to July 2020.  They would like to express their sincere thanks for the warm hospitality and the excellent working conditions.\\
Also, during the preparation of this paper,  Bennis has been invited by the University of Almer\'{i}a. He is always thankful to the department of Mathematics  for the warm hospitality during his stay in Almer\'{i}a.\\
The  authors Luis Oyonarte and J.R. Garc\'{\i}a Rozas were partially supported by Ministerio de Econom\'{\i}a y Competitividad, grant reference 2017MTM2017-86987-P.
\bigskip


Houda Amzil:   CeReMAR Center; Faculty of Sciences, Mohammed V University in Rabat, Rabat, Morocco.

\noindent e-mail address: houda.amzil@um5r.ac.ma; ha015@inlumine.ual.es
		
 Driss Bennis:   CeReMAR Center; Faculty of Sciences, Mohammed V University in Rabat, Rabat, Morocco.

\noindent e-mail address: driss.bennis@um5.ac.ma; driss$\_$bennis@hotmail.com

J. R. Garc\'{\i}a Rozas: Departamento de  Matem\'{a}ticas,
Universidad de Almer\'{i}a, 04071 Almer\'{i}a, Spain.

\noindent e-mail address: jrgrozas@ual.es

Hanane Ouberka:   CeReMAR Center; Faculty of Sciences, Mohammed V University in Rabat, Rabat, Morocco.

\noindent e-mail address: hanane$\_$ouberka@um5.ac.ma; ho514@inlumine.ual.es

Luis Oyonarte: Departamento de  Matem\'{a}ticas, Universidad de
Almer\'{i}a, 04071 Almer\'{i}a, Spain.

\noindent e-mail address: oyonarte@ual.es

\end{document}